\documentclass[11pt,reqno]{amsart}
\usepackage{graphicx} % Required for inserting images

\usepackage[a4paper,textwidth=6.25in,textheight=9.15in]{geometry}
\usepackage{amsmath,amssymb,mathtools}
\usepackage{bm}
\usepackage{booktabs,tabularx,array}
\usepackage{enumitem}
\usepackage{microtype}
\usepackage{xcolor}
\usepackage[colorlinks=true,citecolor=green!45!black,linkcolor=blue!55!black,urlcolor=blue!55!black]{hyperref}
\usepackage[normalem]{ulem}
\usepackage{amsmath}
\usepackage{mathrsfs}
\usepackage{soul}
\usepackage{bbm}

\numberwithin{equation}{section}
\setlist[enumerate]{leftmargin=*,topsep=3pt,itemsep=2pt}
\renewcommand{\arraystretch}{1.12}
\newtheorem{theorem}{Theorem}[section]

\newtheorem{proposition}[theorem]{Proposition}
\newtheorem{corollary}[theorem]{Corollary}
\newtheorem{lemma}[theorem]{Lemma}
\theoremstyle{definition}
\newtheorem{definition}[theorem]{Definition}
\newtheorem{problem}[theorem]{Problem}
\theoremstyle{remark}
\newtheorem{remark}[theorem]{Remark}
\newtheorem{example}[theorem]{Example}

\renewcommand{\emph}[1]{{\it#1}}

\newcommand{\one}{\mathbbm{1}}

\newcommand{\T}{\mathbb T}
\newcommand{\N}{\mathbb N}
\newcommand{\R}{\mathbb R}
\newcommand{\Z}{\mathbb Z}
\newcommand{\Q}{\mathbb Q}
\newcommand{\cA}{\mathcal A}

\newcommand{\cH}{\mathcal H}

\newcommand{\cY}{\mathcal Y}
\newcommand{\cM}{\mathcal M}
\newcommand{\cL}{\mathcal L}
\newcommand{\bfd}{\mathbf{d}}
\newcommand{\bfx}{\mathbf{x}}
\newcommand{\orb}{\mathrm{orb}}
\newcommand{\ND}{\mathrm{ND}}
\newcommand{\bfy}{\mathbf{y}}
\newcommand{\bfz}{\mathbf{z}}

\newcommand{\bfp}{\mathbf{p}}

\newcommand{\cG}{\mathcal G}

\newcommand{\Bad}{\mathrm{BAD}}
\newcommand{\BAD}{\mathrm{BAD}}
\newcommand{\tN}{\mathrm{N}}
\newcommand{\td}{\mathrm{d}}
\newcommand{\diam}{\operatorname{diam}}
\newcommand{\supp}{\operatorname{supp}}
\newcommand{\dist}{\operatorname{dist}}
\newcommand{\dimH}{\dim_{\mathrm H}}
\newcommand{\dimD}{\dim_{\mathrm D}}
\newcommand{\dimL}{\dim_{\mathrm L}}

\title[Shrinking targets versus recurrence ]
{Shrinking targets versus recurrence:\\
A survey on the Hausdorff dimension theory}
\author{Zhang-nan Hu}\address{Z.N.~Hu, College of Science,
  China University of Petroleum, Beijing 102249, P. R. China}
\email{hnlgdxhzn@163.com}

\author{Jun-jie Huang} \address{J. J. Huang:  School of Mathematics, South China University of Technology, Guangzhou 510641, P. R. China}
\email{h1135778478@163.com}

\author{Bing Li*}\thanks{* Corresponding author}  \address{B. Li, School of Mathematics, South China University of Technology, Guangzhou 510641, P. R. China}
\email{scbingli@scut.edu.cn}

\date{\today}
\keywords{Shrinking target sets, recurrence, badly approximating set, non-recurrent set, Hausdorff dimension}
\subjclass[2020]{Primary: 37B20; Secondary: 28A80, 28A78, 37C45}

\dedicatory{\emph{Dedicated to Sanju Velani on the occasion of his 60th birthday}}

\begin{document}

\begin{abstract}
This survey reviews results on the Hausdorff dimension of shrinking target sets and recurrence sets in dynamical systems. In their basic formulations, shrinking target sets concern orbits approaching a prescribed centre, whereas recurrence sets concern orbits returning  close to their initial points.  A central theme is the comparison of the  Hausdorff dimensions of these two classes of limsup sets. The dynamical systems considered include expanding conformal systems, irrational rotations, affine iterated function systems, and matrix transformations. Related results on Hausdorff measure are  also included when they provide natural refinements of the dimension theory. Finally, we review results on badly approximating orbits and non-recurrent sets, with particular attention to winning properties and intersections with self-conformal and self-affine sets.
\end{abstract}

\maketitle

\section{Introduction: background and motivation}

% Let $(X,\bfd)$ be a separable metric space,% ({\color{red}Huang: I choose to use $\bfd$ (instead of $d$) to denote the metric, because I want to distinguish it from that of $\R^d$ or $\T^d$}), 
% let $\mathcal B$ be a Borel $\sigma$-algebra in $X$,  let $T:X\to X$ be a Borel measurable map, and let $\mu$ be a $T$-invariant Borel probability measure on $X$.

 Let $(X,\bfd)$ be a separable metric space, $\mathcal B$ its Borel $\sigma$-algebra, $T:X\to X$ a Borel measurable map, and $\mu$ a $T$-invariant Borel probability measure on $X$.   The Poincar\'e recurrence theorem implies that
\begin{equation}\label{eq:intro-qualitative-recurrence}
  \liminf_{n\to\infty}\bfd(T^n x,x)=0 \quad\text{for }\mu\text{-almost every }x\in X,
\end{equation}
If $T$ is ergodic, then Birkhoff's ergodic theorem gives the corresponding
fixed target statement: for any $y\in\operatorname{supp}\mu$,
\begin{equation}\label{eq:intro-orbit-approaches-target}
  \liminf_{n\to\infty}\bfd(T^n x,y)=0\quad\text{for }\mu\text{-almost every }x\in X.
\end{equation}
The above \eqref{eq:intro-qualitative-recurrence} and \eqref{eq:intro-orbit-approaches-target} describe two basic forms of qualitative approximation:  a $\mu$-typical orbit may return to its initial point, or  approach a fixed target along a subsequence of times $n\in\N$.  Neither statement, however, records the rate at which the approximation occurs.

To pass from qualitative to quantitative questions, let $\psi:\mathbb N\to(0,\infty)$ satisfy $\psi(n)\to0$.  Given $y\in X$, define the shrinking target set
\begin{equation*}\label{eq:intro-shrinking-target}
  W(T,y,\psi):=\bigl\{x\in X:\bfd(T^n x,y)<\psi(n) \text{ for infinitely many }n\in\mathbb N\bigr\},
\end{equation*}
and define the  quantitatively recurrent set
\begin{equation}\label{eq:intro-recurrence}
  R(T,\psi):=\bigl\{x\in X:\bfd(T^n x,x)<\psi(n) \text{ for infinitely many }n\in\mathbb N\bigr\}.
\end{equation}
These two sets are the basic objects of study in this survey.  Although the two definitions differ only in the centre of the target, this distinction changes the geometry of the level sets. For shrinking target sets, the $n$-th level is the inverse image $T^{-n}B(y,\psi(n))$ of a prescribed ball. For  recurrent sets, the centre depends on the initial point of the orbit itself, and the $n$-th level consists of points whose $n$-th iterates return close to their original point.

Both mentioned sets are dynamically defined $\limsup$ sets.  For any $n\in\N$, let
\[
  W_n(T,y,\psi):=T^{-n}B(y,\psi(n)) \quad\text{and}\quad R_n(T,\psi):=\{x\in X:\bfd(T^n x,x)<\psi(n)\},
\]
then the shrinking target set and recurrent set have brief descriptions as the following
\[  W(T,y,\psi)=\limsup_{n\to\infty}W_n(T,y,\psi),\quad R(T,\psi)=\limsup_{n\to\infty}R_n(T,\psi).\]
Since $\mu$ is $T$-invariant, the $\mu$-measure of $W_n(T,y,\psi)$ is exactly the $\mu$-measure of the ball $B(y,\psi(n))$. This together with the convergent Borel--Cantelli lemma immediately implies that
\begin{equation*}\label{eq:measure-convergence}
 \begin{aligned}
  \sum_{n=1}^{\infty}\mu\bigl(B(y,\psi(n))\bigr)<\infty
  &\quad\Longrightarrow\quad
  \mu\bigl(W(T,y,\psi)\bigr)=0,\\
  \sum_{n=1}^{\infty}\mu\bigl(R_n(T,\psi)\bigr)<\infty
  &\quad\Longrightarrow\quad
  \mu\bigl(R(T,\psi)\bigr)=0.
 \end{aligned}
\end{equation*}
The divergent case is considerably more delicate and typically requires quantitative mixing or quasi-independence estimates. Under suitable assumptions, divergence of the corresponding sum of measures often yields a full-measure conclusion, commonly referred to as dynamical Borel--Cantelli lemma. Stronger versions also provide asymptotic estimates for the number of visits, as follows:
\begin{equation*}
    \sum_{n=1}^N\one_{W_n(T,y,\psi)}(x),\qquad\sum_{n=1}^N\one_{R_n(T,\psi)}(x).
\end{equation*}
For the recurrence problem, an additional difficulty is that invariance alone does not give direct estimate for $\mu(R_n(T,\psi))$. We will discuss prior work in this direction later in this section.

When a shrinking target set or recurrent set is a $\mu$-null set, it is natural to investigate its finer size in terms of Hausdorff dimension and Hausdorff measures. 
Our principal focus is the Hausdorff dimension of shrinking target sets and {recurrent } sets. In several settings, the dimension results are complemented by zero--full laws for Hausdorff measures associated with general dimension functions. The relevant definitions and notation are recalled at the beginning of  {Section~\ref{sec:preliminaries}}. The arguments surveyed below involve a range of covering, measure theoretic, thermodynamic and geometric methods. The general principles  for Hausdorff dimensional estimates used widely in the present literature are reviewed in Section~\ref{sec:preliminaries}.

Beyond the basic sets $W(T,y,\psi)$ and $R(T,\psi)$, the literature considers several natural variants.  { In shrinking target problems, the centre of the target may vary with time. Other extensions concern the geometry of the targets, which may be a symbolic cylinder, a rectangle, or a more general anisotropic set.} The shrinking or recurrent rate may also be determined by an ergodic sum rather than by a prescribed numerical sequence. We include such variants when they retain the underlying approximation structure of shrinking targets or recurrence. After surveying shrinking targets and recurrence separately, we discuss two frameworks in which the two problems admit a common formulation: recurrence under Lipschitz twists and limsup sets generated by matrix sequences. The first applies to a broad class of dynamical systems, whereas the second exploits the linear structure of matrix transformations.

A complementary aspect of the theory concerns badly approximating and non-recurrent orbits. Define the badly approximating set
\begin{equation*}\label{eq:intro-badly-approximating}
  \operatorname{BAD}(T,y):=\bigl\{x\in X:\liminf_{n\to\infty}\bfd(T^n x,y)>0\bigr\}
\end{equation*}
and the non-recurrent set
\begin{equation*}\label{eq:intro-non-recurrent}
  \tN(T):=\bigl\{x\in X:\liminf_{n\to\infty}\bfd(T^n x,x)>0\bigr\}.
\end{equation*}
Under the hypotheses of \eqref{eq:intro-qualitative-recurrence} and \eqref{eq:intro-orbit-approaches-target}, these exceptional sets are $\mu$-null, but in many expanding systems they have full Hausdorff dimension.  Frequently they are much larger in the sense of Schmidt's
game: winning properties are stable under countable intersections and force large intersections with broad classes of fractals.  The final part of the survey develops this complementary theory for matrix sequences and piecewise expanding maps, and discusses intersections with self-conformal and self-affine sets.

The shrinking  target problem has roots in metric Diophantine approximation. For irrational rotations, early measure-theoretic results are already present in the work of Cassels \cite{Cassels1950} and Kurzweil \cite{Kurzweil1955}.  The dimension theory for expanding conformal dynamics was initiated by Hill and Velani \cite{HillVelani1995}, who studied expanding rational maps on their Julia sets,  {with further developments} in \cite{HillVelani1997}.    Subsequent { works} treated expanding Markov maps, continued fractions, $\beta$-transformations, conformal iterated function systems, and toral endomorphisms; see, among others,  
\cite{AllenBarany2021,BugeaudWang2014, HillVelani1999, LiLiaoVelaniZorin2023, LiWangWuXu2014}{, as well as non-conformal self-affine systems \cite{BakerKoivusalo2024, BaranyRams2018}} . Nonautonomous analogues associated with Cantor series expansions were studied in \cite{SunCao2017}.  The mass transference principle \cite{BeresnevichVelani2006} provides a mechanism for deriving Hausdorff-measure statements from full-measure results for related limsup sets.

Quantitative recurrence has a separate history.  A foundational theorem of Boshernitzan \cite{Boshernitzan1993} states that if the $\alpha$-dimensional Hausdorff measure $\mathcal{H}^{\alpha}$ is $\sigma$-finite on $X$ for some $\alpha>0$, then
\[
  \liminf_{n\to\infty}n^{1/\alpha}\bfd(T^n x,x)<\infty \quad\text{for }\mu\text{-almost every }x\in X;
\]
if $\mathcal H^\alpha(X)=0$, the $\liminf$ term is zero.  Barreira and Saussol \cite{BarreiraSaussol2001} subsequently related quantitatively recurrent
rates to the local pointwise dimension of the invariant measure.  General approximation functions led to dynamical Borel--Cantelli lemma in expanding, self-similar, and self-conformal settings; see, among others, \cite{BakerFarmer2021, ChangWuWu2019, HeLiao2026, HussainLiSimmonsWang2022, KleinbockZheng2023}.  Feng and Wu \cite{FengWu2001} studied the Hausdorff dimension of recurrence-rate level sets in symbolic spaces. For $\limsup$ recurrent sets of the form \eqref{eq:intro-recurrence}, Tan and Wang \cite{TanWang2011} obtained completely dimensional results for the $\beta$-transformation. Subsequent work treated conformal systems, homogeneous self-similar sets, affine systems and matrix transformations; see, for example,  \cite{BakerFarmer2021, BakerKoivusalo2024, ChangWuWu2019, HuPersson2024, SeuretWang2015}  and the references therein.  For a broader historical perspective, we refer the reader to the survey  of Wang and Wu \cite{WangWu2017Survey} for an account of the development of Hausdorff dimensional results for shrinking target sets and {recurrent} sets, and to He, Li and Velani \cite{HELIvelani} for an overview of the corresponding measure-theoretic results.

The study of sizes of $\Bad(T,y)$ and $\tN(T)$ dates back to Erd\"{o}s \cite{MR412138} who asked whether there exists an irrational number $\alpha$ such that $\{a_n\alpha\}_{n\in\N}$ is not dense modulo one when $\{a_n\}_{n\in\N}$ is a lacunary sequence of  integers (see Section \ref{badsec} for definition). This problem was completely resolved by Pollington \cite{MR540398}, who showed that if the sequence of real numbers $\{a_{n}\}_{n\in\N}$ is  lacunary, then one has
\begin{equation*}
    \dimH\big(\{x\in[0,1):\inf\{a_nx\ (\mathrm{mod}\ 1):n\in\N\}>0\}\big)\ = \ 1.
\end{equation*}
To see the connection with the dynamical setting, let $m\geq 2$ be a positive integer. By taking $a_n=m^n$ or $a_n=m^n-1$, the above result implies that $\Bad(T_m,0)$ and $\tN(T_m)$ has full Hausdorff dimension, where $T_m$ is the $\times m$ map on $[0,1)$ defined as $T_m(x):=mx\ (\mathrm{mod}\ 1)$. Since then, substantial progress has been made  in the study of sizes of $\Bad(T,y)$ and $\tN(T)$ for various systems; see, for example, \cite{MR2818688,MR980795,MR1440064,hu2025,huang2025win,MR540398,nayuan2021} and references therein.

The core of the survey is organized around four families of dynamical systems for which a substantial Hausdorff theory of shrinking targets and recurrence has been developed. The first consists of expanding conformal systems.  The second consists of irrational rotations, where the shrinking target problem is equivalent to inhomogeneous Diophantine approximation.  Notice that  self-recurrence for a rotation is degenerate in the present formulation, since
$ \|R_\alpha^n x-x\|=\|n\alpha\|$
is independent of $x$; this explains why rotations occur primarily on the shrinking target problem section of the survey.  The third family consists of genuinely non-conformal affine systems. The fourth consists of matrix transformations and, more generally, matrix sequences.
  These four families  are not disjoint, and also do not exhaust the literature.  Related results are also known in several other settings, but we discuss them only briefly
when they illustrate a different method or phenomenon.

The structure of this survey is as follows. Section~\ref{sec:shrinking} surveys shrinking target problem results for expanding conformal systems, irrational rotations, affine iterated function systems, and matrix transformations.  Section~\ref{sec:recurrence} turns to
quantitative recurrence for expanding Markov maps, conformal and affine
iterated function systems, and  matrix transformations.  Section~\ref{sec:comparison} presents two frameworks that place shrinking target sets and {recurrent} sets under a common roof:
recurrence under Lipschitz twists and limsup sets generated by matrix sequences.  The final section studies badly approximating and non-recurrent orbits, first for matrix sequences and piecewise expanding maps and then through Schmidt-type games and intersections with self-conformal and self-affine fractals.

\section{Preliminaries}\label{sec:preliminaries}

This section fixes the notation to be used throughout the survey and records several general principles for estimating Hausdorff measure and dimension. We include only the formulations that will be needed later; assumptions and constructions specific to individual dynamical systems will be introduced in the corresponding sections.

Throughout, for a positive function $\psi:\mathbb N\to(0,\infty)$, we write
\[
\alpha(\psi):=\liminf_{n\to\infty}\frac{-\log\psi(n)}{n}
\]
and call $\alpha(\psi)$ the  \emph{lower exponential order} of $\psi$ at infinity. {In parts of the literature, this quantity is referred to simply as the lower order of $\psi$ at infinity.} When $\psi$ is clear from the context, we simply write $\alpha$.

\subsection{Hausdorff measure and Hausdorff dimension }\label{sec:hausdorff-measures-dimension}

We begin by fixing the definitions and notation for Hausdorff measures and Hausdorff dimension. Let $(X,\bfd)$ be a metric space. For a non-empty set $U\subset X$, write
\[
 \operatorname{diam}U:=\sup\{\bfd(x,y):x,y\in U\}.
\]

A \emph{dimension function} is a continuous non-decreasing function $f:[0,\infty)\to [0,\infty)$ with $f(0)=0$. For $E\subset X$ and $\delta>0$, define 
\[
 \mathcal H^f_\delta(E):=\inf\left\{\sum_{i=1}^{\infty}f(\operatorname{diam}U_i):E\subset\bigcup_{i=1}^{\infty}U_i,\ \operatorname{diam}U_i\leq\delta\right\}.
\]
The $f$-Hausdorff measure of $E$ is
\[
 \mathcal H^f(E):=\lim_{\delta\to0}\mathcal H^f_\delta(E).
\]

For  $f(r)=r^s$ ($s\geq0$), we use the standard notation
\[
 \mathcal H^s:=\mathcal H^f, \quad  \mathcal H^s_\delta:=\mathcal H^f_\delta.
\]
The Hausdorff dimension of $E$ is defined by
\[
 \dim_{\mathrm H}E:=\inf\{s\geq0:\mathcal H^s(E)=0\}=\sup\{s\geq0:\mathcal H^s(E)=\infty\}.
\]
With the above notation, for any subset $E\subset X$, one has $\cH^{0}(E)=\#E$ and 
\[
 \mathcal H^s(E)=\begin{cases}
  \infty, & 0< s<\dim_{\mathrm H}E,\\
  0, & s>\dim_{\mathrm H}E.
 \end{cases}
\]
At the critical exponent $s=\dim_{\mathrm H}E$, however, the Hausdorff measure may be zero, positive and finite, or infinite. This is one reason why a Hausdorff measure statement provides finer information than a dimension formula alone.

For some specific $\limsup$ set $E\subset X$, such as shrinking target set or recurrent set mainly discussed in this paper, a typical zero--full law with respect to $\mathcal H^f$ takes the form
\[
 \mathcal H^f(E)= \begin{cases}
  0, & \text{if a specified series converges},\\
  \mathcal H^f(X), & \text{if the same series diverges}.
 \end{cases}
\]
Some results in the survey give the stronger local conclusion
\[
 \mathcal H^f(E\cap B) = \mathcal H^f(X\cap B)
\]
for every ball $B$ that intersects $X$. We shall state explicitly whether ``full measure'' refers to the global equality or to this local form.

\subsection{General principles for Hausdorff measure and dimension estimates}
\label{subsec:dimension-principles}

Most upper bounds considered in this survey are obtained by constructing suitable covers of the relevant $\limsup$ sets. Lower bounds are more varied and often combine several methods, including mass distribution principle and Cantor-type arguments, energy estimates, ubiquity and mass transference principle, large intersection techniques, and constructions based on Gibbs or equilibrium measures. These methods are not mutually exclusive: Cantor or thermodynamic constructions often provide measures to which local mass or energy estimates are applied, while energy, ubiquity and mass transference principle may in turn be used to establish large intersection properties. The appropriate combination depends on the geometry, overlap, and distribution of the level sets $W_n(T,y,\psi)$ and $R_n(T,\psi)$. 

\subsubsection{Covering estimates}\label{subsubsec:covering-estimates}

The following elementary consequence of the definition of Hausdorff measure is commonly referred to in metric Diophantine approximation as
the Hausdorff--Cantelli lemma; see \cite[Lemma~3.10]{BernikDodson1999}.

\begin{lemma}[Hausdorff--Cantelli lemma] \label{lem:hausdorff-cantelli}
Let
\[
   E=\limsup_{n\to\infty} E_n=\bigcap_{N=1}^{\infty}\bigcup_{n\geq N}E_n,
\]
and suppose that, for each $n$, the set $E_n$ admits a countable cover
\[
   E_n\subseteq\bigcup_{j}U_{n,j}
\]
such that  $ \sup_j\{\operatorname{diam}U_{n,j}\}\to 0\,\,\text{as }n\to\infty.$ If $f$ is a dimension function and
\[
   \sum_{n=1}^{\infty}\sum_jf\bigl(\operatorname{diam}U_{n,j}\bigr)<\infty,
\]
then
\[
   \mathcal H^f(E)=0.
\]
In particular, if
\[
   \sum_{n=1}^{\infty}\sum_j\bigl(\operatorname{diam}U_{n,j}\bigr)^s<\infty,
\]
then \(\dim_{\mathrm H}E\leq s\).
\end{lemma}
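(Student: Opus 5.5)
The plan is to use the tail unions of the given covers directly as admissible covers in the definition of $\mathcal H^f_\delta$. Fix $\delta>0$. Since $\sup_j\operatorname{diam}U_{n,j}\to0$, there is $N_0$ such that $\operatorname{diam}U_{n,j}\le\delta$ for all $n\ge N_0$ and all $j$. For any $N\ge N_0$, the definition $E=\bigcap_{N}\bigcup_{n\ge N}E_n$ gives
\[
E\subseteq\bigcup_{n\ge N}E_n\subseteq\bigcup_{n\ge N}\bigcup_j U_{n,j}.
\]
This is a countable cover of $E$ by sets of diameter at most $\delta$, hence admissible for $\mathcal H^f_\delta$. Therefore
\[
\mathcal H^f_\delta(E)\le\sum_{n\ge N}\sum_j f\bigl(\operatorname{diam}U_{n,j}\bigr).
\]

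Since the double series converges, its tails tend to $0$ as $N\to\infty$. Letting $N\to\infty$ gives $\mathcal H^f_\delta(E)=0$ for every $\delta>0$. Taking $\delta\to0$ then yields $\mathcal H^f(E)=0$.

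For the dimension statement, apply the above with $f(r)=r^s$, which is a dimension function when $s>0$. This gives $\mathcal H^s(E)=0$, so by the definition of Hausdorff dimension as $\inf\{t:\mathcal H^t(E)=0\}$ we get $\dim_{\mathrm H}E\le s$. The degenerate case $s=0$ is handled separately: $r^0$ contributes $1$ per nonempty set, so convergence forces only finitely many nonempty $U_{n,j}$ overall. The same tail argument then shows that $E$ is covered by an empty family, so $E=\emptyset$, and the bound holds trivially.

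There is no real obstacle here. The only point requiring care is the role of the hypothesis $\sup_j\operatorname{diam}U_{n,j}\to0$: it is exactly what makes the tail covers admissible at every scale $\delta$. Without it, convergence of the series would only bound $\mathcal H^f_\delta$ for large $\delta$.
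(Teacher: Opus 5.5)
Your proof is correct and is the standard argument the paper has in mind. The paper gives no proof of its own, calling the lemma an elementary consequence of the definition of $\mathcal H^f$ and citing Bernik--Dodson, and your use of the tail unions $\bigcup_{n\ge N}\bigcup_j U_{n,j}$ as admissible $\delta$-covers, together with your separate treatment of $s=0$, fills this in properly. One small quibble concerns your closing remark: for any $f$ positive on $(0,\infty)$, such as $r^s$ with $s>0$, convergence of the double series already forces at most $f(\delta)^{-1}\sum_n\sum_j f(\operatorname{diam}U_{n,j})$ of the sets $U_{n,j}$ to have diameter exceeding $\delta$, so in that case the hypothesis $\sup_j\operatorname{diam}U_{n,j}\to0$ is redundant rather than essential.
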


For any $n\in\N$, let $\mathcal{C}_n$ be a cover of $E_n$ and write
\[
  Z_n(s):=\sum_{C\in\mathcal C_n} \bigl(\operatorname{diam}C\bigr)^s,
\]
then the Hausdorff--Cantelli lemma gives
\[
   \dim_{\mathrm H}\left(\limsup_{n\to\infty}\bigcup_{C\in\mathcal C_n}C\right)\leq\inf\left\{ s>0:\sum_{n=1}^{\infty}Z_n(s)<\infty \right\}.
\]
In the symbolic systems considered below, the sets in $\mathcal{C}_n$ are indexed by admissible words of length $n$, and their diameters are  connected with additive or subadditive weights along these words. The corresponding covering sums $Z_n(s)$ are therefore closely related to the
partition sums defining the relevant pressure. %In symbolic dynamical systems, the covering sums \(Z_n(s)\) are often the partition sums appearing in the definition of pressure; see Section~\ref{subsec:symbolic-pressure}.

The strategy for choosing the cover $\mathcal{C}_n$ matters greatly. If $E_n$ is the union of some ``regular''-shaped subsets of $X$, such as balls, then an optimal covering of $E_n$  often consists of  balls (or cylinders when $E_n$ is dynamically defined), each of diameters comparable to that of the corresponding subsets. Nevertheless, if $E_n$ is the union of some ``irregular'' subsets, such as rectangles or ellipsoids with strongly direction-dependent scales, then such a cover is too coarse and may result in a loss of essential geometric information. A more suitable approach should account for both the different intrinsic scales and the spacing distribution of the underlying subsets.

\subsubsection{Mass distribution and energy estimates}\label{subsec:measure-lower-bounds}

The most commonly used measure-based criteria for Hausdorff-dimension lower bounds is the mass distribution principle. For a precise statement of the Mass Distribution Principle, see \cite[Section 2]{BeresnevichVelani2006}; for further general information, we refer the reader to \cite{Falconer2003,Mattila1995}.

\begin{proposition}[Mass distribution principle] \label{prop:mass-distribution}
Let $F\subseteq X$, and let $\nu$ be a finite Borel measure supported on $F$ with $\nu(F)>0$. Suppose that there exist constants $C>0$
and $r_0>0$ such that
\[
   \nu(B(x,r))\leq C f(r)
\]
for each $x\in F$ and $0<r<r_0$. Then  $\mathcal H^f(F)\geq\frac{\nu(F)}{C}.$ In particular, if $f(r)=r^s$, 
%\[  \nu(B(x,r))\leq Cr^s,\]
then $ \dim_{\mathrm H}F\geq s.$
\end{proposition}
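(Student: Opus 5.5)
The plan is to compare an arbitrary admissible cover of $F$ with the total mass of $\nu$, using the local mass bound. First I reduce to small covers. Fix $0<\delta<r_0$ and let $\{U_i\}_{i\ge1}$ be any countable cover of $F$ with $\diam U_i\le\delta$. Discard every $U_i$ that does not meet $F$; what remains still covers $F$, and removing sets only lowers the sum $\sum_i f(\diam U_i)$. So we may assume each $U_i$ contains a point $x_i\in F$.

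The key step is to put each $U_i$ inside a ball centred in $F$. With $r_i:=\diam U_i$ we have $U_i\subseteq \overline{B}(x_i,r_i)\subseteq B(x_i,r_i')$ for any $r_i'>r_i$. The hypothesis is stated only for open balls and radii $r<r_0$, so I take $r_i'=r_i+\eta_i$ with $\eta_i>0$ small enough that $r_i'<r_0$ and $f(r_i')\le f(r_i)+\varepsilon 2^{-i}$; continuity of $f$ makes this possible. The degenerate case $r_i=0$ needs no separate treatment: then $f(r_i)=0$, and continuity with $f(0)=0$ still gives a small admissible $r_i'$. Since $\nu$ is supported on $F$ and the $U_i$ cover $F$, countable subadditivity gives
\[
\nu(F)\le\sum_i\nu(U_i)\le\sum_i\nu\bigl(B(x_i,r_i')\bigr)\le C\sum_i f(r_i')\le C\sum_i f(\diam U_i)+C\varepsilon .
\]
If some $U_i$ is not Borel, I use $\nu(U_i)\le\nu(\overline{U_i})$, since the closure has the same diameter.

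Now I conclude. Letting $\varepsilon\to0$ and taking the infimum over admissible covers yields $\mathcal H^f_\delta(F)\ge \nu(F)/C$ for every $\delta<r_0$. Letting $\delta\to0$ gives $\mathcal H^f(F)\ge\nu(F)/C>0$. For the special case $f(r)=r^s$, this says $\mathcal H^s(F)>0$, so $s$ does not belong to $\{t:\mathcal H^t(F)=0\}$. Because $\mathcal H^t(F)=0$ implies $\mathcal H^{t'}(F)=0$ for all $t'>t$, that set is an up-interval, and it therefore lies in $[s,\infty)$. Hence $\dimH F\ge s$.

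The only delicate point is the second step. The hypothesis controls open balls centred at points of $F$ with radius strictly below $r_0$, while a covering set is merely contained in a closed ball of radius equal to its diameter. Handling this boundary case is exactly where continuity of the dimension function is used, through the $\varepsilon 2^{-i}$ enlargement. Everything else is routine subadditivity.
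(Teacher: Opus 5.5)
Your proof is correct. It is the standard covering argument for the mass distribution principle; the paper gives no proof of its own and defers to \cite[Section~2]{BeresnevichVelani2006} and \cite{Falconer2003,Mattila1995}, where this subadditivity argument is used. You handle the one point that the paper's conventions make delicate: you use continuity of $f$ to pass from $\overline{B}(x_i,\diam U_i)$ to an open ball of radius below $r_0$, which keeps the sharp constant $1/C$ under the diameter-based definition of $\mathcal H^f_\delta$ and a hypothesis that only concerns balls centred in $F$.
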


A useful local formulation is given by the lower local dimension
\[
   \underline d_\nu(x):=\liminf_{r\to0}\frac{\log\nu(B(x,r))}{\log r}.
\]
If $\nu(F)>0$ and
\[
   \underline d_\nu(x)\geq s\quad\text{for $\nu$-almost every $x\in F$,}
\]
then
\[
   \dim_{\mathrm H}F\geq s.
\]

In applications, the required measure is often constructed on a Cantor-type subset of the limsup set. Mass is assigned to the surviving components either uniformly or according to geometric or symbolic weights chosen for the construction, and the lower bound follows once the mass of arbitrary balls has been controlled. Separation, bounded distortion and quantitative distribution estimates are commonly used for this purpose.

A different measure-theoretic criterion is provided by energy estimates. For a finite Borel measure $\nu$ and $s>0$, define its $s$-energy by %({\color{blue}Huang: I would like to use $\td\nu(x)$ instead of $d\nu(x)$})
\[
   I_s(\nu):=\iint \bfd(x,y)^{-s}\,\td\nu(x)\,\td\nu(y).
\]
If $\nu$ is a non-zero finite measure supported on $F$ and satisfies
\[ I_s(\nu)<\infty,\]
then $\mathcal{H}^s(F)=\infty$ and, in particular, $\dim_{\mathrm H}F\geq s.$ See, for example, \cite[Theorem~4.13]{Falconer2003} and \cite[Theorem~4.27]{MortersPeres2010}.

 In applications to $\limsup$ sets, energy estimates may be applied either to a measure supported on the set itself or to a sequence of measures $\nu_n$ supported on approximating sets $E_n$. In the latter case, weak convergence of $\nu_n$ to a suitable  measure, together with uniform energy estimates, can yield a lower bound on Hausdorff dimension, or more strongly a large intersection property, for $\limsup_n E_n$; see \cite{Persson2019, PerssonReeve2015}.

\subsubsection{Mass transference principle}
\label{subsubsec:mass-transference}

In many problems it is easier to prove a full-measure statement for a $\limsup$ set formed by enlarged targets than to construct a measure
directly on the original $\limsup$ set. The mass transference principle of Beresnevich and Velani \cite{BeresnevichVelani2006} converts such a
statement into a Hausdorff-measure statement for the original targets.

The following form of the mass transference principle (MTP), first proved in \cite{BeresnevichVelani2006}, is stated for Ahlfors regular measures and a general dimension function. In \cite{hussain2019}, Hussain and Simmons gave a new proof by developing a broader principle for Hausdorff measure.  We refer to \cite{MR4902008} for the corresponding MTP for Borel measures without the regularity assumption. To state the result,  given a metric space $X$ and $\delta>0$, we say that a Borel measure $m$ on $X$ is  $\delta$-Ahlfors regular measure if there exist constants $c_1,c_2>0$ and $r_0>0$ such that
\[
   c_1r^\delta\leq m(B(x,r)) \leq c_2r^\delta
\]
for each $x\in X$ and $0<r<r_0$. For a ball $B=B(x,r)$ in $X$, we write  $B^f:=B\bigl(x,f(r)^{1/\delta}\bigr).$

\begin{theorem}[Mass transference principle] \label{thm:mass-transference}
Let $X$ be a locally compact metric space carrying a $\delta$-Ahlfors regular measure $m$. Let $ B_n=B(x_n,r_n)$ with $r_n\to0$ as $n\to\infty$.  Let $f$ be a dimension function such that $x^{-\delta}f(x)$ is monotonic. Suppose that, for any ball $B\subseteq X$,
\[
   m\left(B\cap\limsup_{n\to\infty}B_n^{f}\right) = m(B).
\]
Then, for any ball $B\subseteq X$,
\[
   \mathcal H^f\left(B\cap\limsup_{n\to\infty}B_n \right) = \mathcal H^f(B).
\]
\end{theorem}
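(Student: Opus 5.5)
The plan is to follow the original Beresnevich--Velani strategy: build a Cantor-type subset of $B\cap\limsup_n B_n$ and a probability measure on it, then apply the mass distribution principle (Proposition~\ref{prop:mass-distribution}). First dispose of the trivial case. If $r^{-\delta}f(r)\to\infty$ as $r\to0$, then $\mathcal H^f(B)=\infty$. This is the genuine case to prove. If instead $r^{-\delta}f(r)$ tends to a finite positive limit, then $\mathcal H^f$ is comparable to $m$, and $B_n^f$ is comparable to $B_n$ up to constant dilation. The conclusion then follows from the hypothesis together with a standard Lebesgue density and dilation argument for limsups of balls in doubling spaces. So assume $f(r)/r^\delta\to\infty$. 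It then suffices to show $\mathcal H^f(B\cap\limsup B_n)\geq \eta$ for every $\eta>0$, and for this we construct a Cantor set $K_\eta\subseteq B\cap\limsup_n B_n$ with a measure $\mu$ satisfying $\mu(A)\lesssim f(\diam A)/\eta$ for all small sets $A$.

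The key technical step is a \emph{K--G--B lemma}. Take any ball $A$ and any $N$. The full-measure hypothesis $m(A\cap\limsup B_n^f)=m(A)$ gives finitely many balls $B_{n_i}^f$, with $n_i\geq N$, contained in $A$. These balls can be chosen so that they are pairwise separated (the $3$-times enlargements are disjoint), by the Vitali $5r$-covering lemma. Ahlfors regularity then lets us require that they cover a fixed proportion: $\sum_i m(B_{n_i}^f)\geq c\,m(A)$ with $c$ depending only on $c_1,c_2$. Inside each chosen $B_{n_i}^f$ we place the shrunken ball $B_{n_i}$, which is much smaller because $f(r)/r^\delta\to\infty$. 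Iterating this gives levels $K(1)\supset K(2)\supset\cdots$. At level $j$, inside each level-$(j-1)$ ball, we first select many disjoint local collections of balls $B_n^f$ (this is where $\eta$ enters). There are enough of them that $\sum f(r_n)$ over the selected balls exceeds a large multiple of $f$ of the parent radius. We then pass to the corresponding $B_n$. The set $K_\eta=\bigcap_j K(j)$ lies in $\limsup B_n$, since each point lies in some $B_n$ with $n\to\infty$.

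Mass is distributed top-down. A child ball $B_n$ inside the parent $L$ receives
\[
\mu(B_n)=\mu(L)\frac{f(r_n)}{\sum_{\text{children } B_k\subseteq L} f(r_k)}.
\]
Since the denominators are large compared with $f(\mathrm{radius}\,L)$, an induction gives $\mu(B_n)\lesssim f(r_n)/\eta$ for every ball in the construction.

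The main obstacle is the final estimate $\mu(A)\lesssim f(\diam A)/\eta$ for an arbitrary ball $A$ centred in $K_\eta$. Let $L$ be the construction ball of highest level $j-1$ that meets $A$ alone (no other ball at that level). If $A$ meets only one child, the bound is inherited. Otherwise $A$ meets several level-$j$ balls. By separation, each one it meets is then relatively small compared with $A$, specifically $r_k\lesssim \diam A$. We bound the mass of the children met by $A$ using
\[
\sum_{B_k\cap A\neq\emptyset} f(r_k)=\sum r_k^\delta\cdot\frac{f(r_k)}{r_k^\delta}.
\]
We then use monotonicity of $x^{-\delta}f(x)$ (decreasing in the relevant case) to get $f(r_k)/r_k^\delta\leq$ the corresponding ratio at the scale of the enlarged ball $B_k^f$. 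Next we compare $\sum m(B_k^f)$ with $m(3A)\asymp(\diam A)^\delta$ by disjointness and Ahlfors regularity. Finally we use $\mu(L)$ together with $\min\{1,(\diam A/\mathrm{radius}\,L)^\delta\}$-type interpolation, which yields $\mu(A)\lesssim f(\diam A)/\eta$.

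Getting the choice of local collections to mesh with this case analysis is the delicate part. I would follow the published argument closely here, possibly using the simpler Hussain--Simmons route as a check. The mass distribution principle then gives $\mathcal H^f(K_\eta)\gtrsim\eta$. Letting $\eta\to\infty$ gives $\mathcal H^f(B\cap\limsup B_n)=\infty=\mathcal H^f(B)$.
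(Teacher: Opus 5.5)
You have the Beresnevich--Velani architecture; the survey itself gives no proof and just cites Beresnevich--Velani and Hussain--Simmons. That means the trichotomy for $\lim_{r\to0}r^{-\delta}f(r)$, the $K_{G,B}$-lemma via the $5r$-covering lemma, a Cantor set with mass split in proportion to $f(r_n)$, and the mass distribution principle with $\eta\to\infty$. You omit the case where the limit is $0$, but it is trivial since then $\mathcal H^f(B)=0$.

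\textbf{The local level.} The mechanism that makes the construction work is missing, and as you phrase it, it cannot work. Since $f(r_n)=r(B_n^f)^\delta\asymp m(B_n^f)$, take any family of pairwise disjoint enlarged balls $B_n^f$ inside a parent ball $L$ of radius $\rho$. It satisfies $\sum f(r_n)\lesssim m(L)\asymp\rho^\delta$, which is much smaller than $f(\rho)$ because $f(\rho)/\rho^\delta\to\infty$. So no choice of ``disjoint local collections of balls $B_n^f$'' can make $\sum f(r_n)$ exceed a multiple of $f(\rho)$.

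The missing idea is that the local level inside $L$ is a union of many \emph{sub-levels}. Each sub-level comes from applying the $K_{G,B}$-lemma to balls in $L$ that avoid the triples $3B_n$ of all tiny balls already chosen. It is built at a scale below all of those tiny balls. Within one sub-level the $B_n^f$ are disjoint and fill a fixed proportion of $L$, so that sub-level contributes $\asymp\rho^\delta$ to $\sum f(r_n)$. Enlarged balls from different sub-levels overlap heavily; only the shrunken balls are kept separated. The number of sub-levels must be at least of order $f(\rho)/\rho^\delta$, or of order $\eta/\rho^\delta$ at the first level. This is what makes the denominator large.

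\textbf{The general ball estimate.} The same issue invalidates your estimate for an arbitrary ball $A$.
\begin{enumerate}
\item The children $B_k$ meeting $A$ come from many sub-levels, so the $B_k^f$ are not disjoint.
\item $r_k<r(A)$ does not force $B_k^f\subseteq 3A$.
\end{enumerate}
Hence your bound $\sum m(B_k^f)\lesssim m(3A)$ is false.

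Your use of monotonicity is also reversed. Since $x^{-\delta}f(x)$ is non-increasing and $r(B_k^f)\ge r_k$, one gets $f(r_k)/r_k^\delta\ge f(r(B_k^f))/r(B_k^f)^\delta$. In any case $f(r_k)=r(B_k^f)^\delta$ holds by definition, so no monotonicity is needed at that point.

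The correct count goes sub-level by sub-level. Enlarged balls lying inside $3A$ contribute $\lesssim r(A)^\delta$. The larger ones are boundedly many, each with $f(r_k)\le f(r(A))$, and by the scale separation they occur in at most one sub-level. Divide by the total, which is $\asymp(\#\text{sub-levels})\,\rho^\delta\gtrsim f(\rho)$. The number of sub-levels cancels and
\[
\mu(A)\lesssim\mu(L)\Bigl(\frac{f(r(A))}{f(\rho)}+\Bigl(\frac{r(A)}{\rho}\Bigr)^{\delta}\Bigr).
\]
With $\mu(L)\lesssim f(\rho)/\eta$, monotonicity enters exactly once, as $f(\rho)/\rho^\delta\le f(r(A))/r(A)^\delta$ for $r(A)<\rho$. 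This gives $\mu(A)\lesssim f(r(A))/\eta$.

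The remaining case $r(A)\ge\rho$ follows from $\mu(A)\le\mu(L)$. Your case ``$A$ meets only one child'' cannot occur, given how you chose $L$.
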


The theorem above concerns balls. It does not apply to rectangles or other anisotropic targets merely by replacing them with balls of comparable diameter. %Rectangle-to-rectangle mass transference principles require additional geometric hypotheses and will be introduced in the non-conformal settings where they are used; see, for example, \cite{HuHuangLiWu2026,LiLiaoVelaniWangZorin2025,WangWu2021}. {\color{red}Need to add some comments here.}
Extensions to neighbourhoods of more general
resonant sets and to systems of linear forms were established in
\cite{AllenBaker2019,AllenBeresnevich2018}, while mass transference
principles for rectangular targets were developed in
\cite{LiLiaoVelaniWangZorin2025,WangWu2021,WangWuXu2015}. For surveys of
further developments and applications, see
\cite{AllenDaviaud2025,AllenTroscheit2019} and the references therein.

  \subsection{Symbolic notation and pressure}\label{subsec:symbolic-pressure}

Let $\mathcal A$ be a finite or countable alphabet. We write
\[
   \mathcal A^n=\{\mathbf i=i_1\cdots i_n:i_j\in\mathcal A\ \text{for any $j=1,2,...,n$}\},\quad \mathcal A^*=\bigcup_{n\geq1}\mathcal A^n.
\]
For $\mathbf i\in\mathcal A^*$, its length is denoted by $|\mathbf i|$. Give an infinite word $\omega=(\omega_k)_{k\geq1}\in\mathcal A^{\mathbb N}$, we denote  $ \omega|_n=\omega_1\cdots\omega_n$ as its prefix of length $n$. The left shift $\sigma:\mathcal A^{\mathbb N}\to\mathcal A^{\mathbb N}$ is defined by
\[
   \sigma(\omega_1\omega_2\omega_3\cdots)= \omega_2\omega_3\cdots.
\]
We say that a closed subset $\Sigma\subseteq\mathcal{A}^{\N}$ is a subshift if $\sigma(\Sigma)\subseteq\Sigma$. Its language of length $n$ is defined as $\mathcal L_n(\Sigma):= \{\omega|_n:\omega\in\Sigma\}.$ For $\mathbf i\in\mathcal L_n(\Sigma)$, the corresponding cylinder is
\[
   [\mathbf i]:=\{\omega\in\Sigma:\omega|_n=\mathbf i\}.
\]

We first recall the additive classic topological pressure for continuous map of a compact metric space and a continuous potential. The pressures defined over cylinders for countable Markov systems and the subadditive  pressures arising in affine systems will be introduced in the relevant subsections. Let $T:X\to X$ be a map and let $\varphi:X\to\mathbb R$. The $n$-th ergodic sum of $\varphi$ is
\[
   S_n\varphi(x):=\sum_{k=0}^{n-1}\varphi(T^kx).
\]

Suppose now that $X$ is a compact metric space, $T:X\to X$ is continuous and $\varphi\in C(X)$. 
A set $E\subseteq X$ is $(n,\varepsilon)$-separated if 
\[\max_{0\leq k<n}d(T^kx,T^ky)>\varepsilon\]
for all distinct $x,y\in E$. Let
\[
   Z_n(T,\varphi,\varepsilon):=\sup_E \sum_{x\in E}\exp\bigl(S_n\varphi(x)\bigr),
\]
where the supremum is taken over all finite $(n,\varepsilon)$-separated subsets of $X$. The topological pressure of $\varphi$ is
\begin{equation}\label{classic-pressure}
   P(T,\varphi):=\lim_{\varepsilon\to 0}\limsup_{n\to\infty}\frac1n\log Z_n(T,\varphi,\varepsilon).
\end{equation}
By the variational principle \cite[Theorem~9.10]{Walters1982},%[{\color{blue}reference ?}],
\[
   P(T,\varphi)=\sup_{\mu\in\mathcal M_T(X)}\left\{h_\mu(T)+\int_X\varphi\,d\mu\right\},
\]
where $\mathcal M_T(X)$ denotes the set of $T$-invariant Borel probability measures on $X$ {and $h_{\mu}(T)$ denotes the Kolmogorov-Sinai entropy of $T$ with respect to $\mu$}. A measure attaining this supremum is called an equilibrium state, or equilibrium measure, for $\varphi$.

For a subshift $\Sigma$ over a finite alphabet and $\varphi\in C(\Sigma)$, pressure can equivalently be computed from partition sums over admissible words. Let $\mathcal A$ be a finite alphabet. % let $\Sigma\subseteq\mathcal A^{\mathbb N}$ be a subshift, and let $\mathcal L_n$ denote the set of admissible words of length $n$. 
Then
\[
   Z_n^{\mathrm{cyl}}(\varphi):=\sum_{\mathbf i\in\mathcal L_n(\Sigma)}\exp\left(\sup_{\omega\in[\mathbf i]}S_n\varphi(\omega)\right).
\]
These partition sums satisfy the following sub-multiplicative property
\[
   Z_{n+m}^{\mathrm{cyl}}(\varphi)\leq
   Z_n^{\mathrm{cyl}}(\varphi)\cdot
   Z_m^{\mathrm{cyl}}(\varphi).
\]
Consequently,
\[
   P(\sigma,\varphi) =\lim_{n\to\infty}\frac1n\log Z_n^{\mathrm{cyl}}(\varphi).
\]
If the alphabet is countably infinite, the symbolic space need not be compact and the partition sums may be infinite.
The pressures used for countable expanding Markov maps will therefore be defined from the relevant cylinder partition sums where they are
first needed. Subadditive singular value pressures and target dependent modified pressures will likewise be introduced in the corresponding
affine sections.

\section{Shrinking target sets}\label{sec:shrinking}

In this section we survey results about Hausdorff measure and Hausdorff dimension  for shrinking target sets. The discussion is organised according to the underlying system. We begin with conformal systems. Then we turn to irrational rotations, where the dimension theory is closely connected with inhomogeneous Diophantine approximation and continued fractions. Finally, we consider affine iterated function systems and matrix transformations, where non-conformality plays a prominent role.

The notation for Hausdorff measures, ergodic sums and topological pressure is fixed in Section~\ref{sec:preliminaries}. To precisely state the dimensional results, we will introduce in the suitable subsections three notions of pressure: one defined through partition sums over cylinders for expanding Markov maps, one associated with conformal iterated function systems, and the singular value pressure for affine systems.

\subsection{Conformal systems}\label{sec:conformal-shrinking}

We first consider three classes of conformal systems: expanding
rational maps, expanding Markov maps in the unit interval and conformal iterated function systems. We then describe a general dimension transference principle
which unifies several of these settings.

\subsubsection{Expanding rational maps}

Let $T:\widehat{\mathbb C}\to\widehat{\mathbb C}$ be an expanding rational map 
 of degree $d \ge 2$ of the Riemann sphere $\widehat{\mathbb C}= \mathbb C \cup \{\infty \}$ and $J$ be its associated Julia set. Write $ \delta=\dim_{\mathrm H}J.$ We use the spherical metric on $\widehat{\mathbb C}$, write $B(z,r)$ for
the corresponding ball, and let $T'$ denote the derivative of $T$. 

%For $\tau\geq0$, put
%\[\psi_\tau(n):=e^{-n\tau}\quad\text{and}\quad \vartheta_\tau(n,z):=|(T^n)'(z)|^{-\tau}. \]
Let $\psi:\mathbb{N}\to (0,\infty) $ be a positive decreasing function with $\psi(n)\to 0$ as $n\to\infty$, and $\alpha(\psi)$ be its lower exponential order at infinity. Put
\[\vartheta_\psi(n,z):=\psi(|(T^n)'(z)|).\]
Given $z_0\in J$, define
\[
   W(T,z_0,\psi):=\left\{z\in J:T^n z\in B(z_0,\psi(n))\text{ for infinitely many }n\right\},
\]
whereas 
\[
   W(T,z_0,\vartheta_\psi):=\left\{ z\in J:T^n z\in B(z_0,\psi(|(T^n)'(z)|))\text{ for infinitely many }n \right\}.
\]

The first dimension estimates for these sets were obtained by Hill and Velani in \cite{HillVelani1995}. Let $\mu_\delta$ be the equilibrium measure for $T$ with respect to the potential  $-\delta\log|T'|$, and denote
\[
 \chi_\delta:=\int_J\log|T'(z)|\,\td\mu_\delta(z).
\]
 If $t(\psi)$ is the unique solution of
\[
 P\bigl(T,-s\log|T'|\bigr)=s\alpha(\psi),
\]
{where $P(T,\cdot)$ is defined in \eqref{classic-pressure},} then
\begin{equation}\label{eq:rational-initial-bounds}
   \frac{\delta\chi_\delta}{\chi_\delta+\alpha(\psi)}\leq\dim_{\mathrm H}W(T,z_0,\psi)\leq t(\psi).
\end{equation}
Let
\[
\kappa(\psi):=\liminf_{n\to\infty}\frac{-\log\psi(n)}{\log n},
\]
which is the lower polynomial order at infinity of $\psi$. The same work also obtained that 
\begin{equation}\label{eq:hill-velani-derivative-adapted}
\dim_{\mathrm H} W(T,z_0,\vartheta_\psi) =\frac{\delta}{1+\kappa(\psi)}.
\end{equation}

%The exact dimension of $W_{z_0}(\tau)$ was obtained in their subsequent work \cite{HillVelani1997}.

Hill and Velani  \cite{HillVelani1997} subsequently obtained a pressure formula for a different generalisation, in which the shrinking rate is generated by the ergodic sums of a fixed potential. Let $f:J\to\mathbb R$ be a H\"older continuous function satisfying $ f(z)\geq\log|T'(z)|$ for all $z\in J$.
  For $z_0\in J$, define
\[
 D(T,z_0,f) := \left\{ z\in J: z\in B\bigl(y,e^{-S_nf(y)}\bigr) \ \text{for infinitely many pairs }(y,n) \text{ with }T^ny=z_0 \right\}.
\]
\begin{theorem}[Hill--Velani {\cite{HillVelani1997}}]
\label{thm:hill-velani-rational-pressure}
Let $T$, $J$, $z_0$ and $f$ be as above. Then for $z_0\in J$,
\[
   \dim_{\mathrm H}D(T,z_0,f)=s(f),
\]
where  $s(f)$ is the unique solution of the pressure equation
\begin{equation}
    P\bigl(T,-s f\bigr)=0. \label{eq:rational-general-pressure}
\end{equation}

\end{theorem}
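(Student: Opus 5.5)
We prove the two inequalities $\dim_{\mathrm H}D(T,z_0,f)\le s(f)$ and $\ge s(f)$ separately, working throughout with the Markov partition of $J$ and bounded distortion for the expanding map $T$. First we record that $s(f)$ is well defined. The map $s\mapsto P(T,-sf)$ is continuous and strictly decreasing, because $f\ge\log|T'|>0$ on $J$, and it tends to $-\infty$ as $s\to\infty$. At $s=0$ it equals $P(T,0)=h_{\mathrm{top}}=\log d>0$. Hence there is a unique zero $s(f)$. Since $f\ge\log|T'|$, we also have $P(T,-\delta f)\le P(T,-\delta\log|T'|)=0$, so $s(f)\le\delta$.

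\textbf{Upper bound.} For each $n$, the set of points $y$ with $T^ny=z_0$ has $d^n$ elements. They are $(n,\varepsilon)$-separated for a fixed $\varepsilon$ coming from the expansivity of $T$. By definition,
\[
D(T,z_0,f)=\limsup_{n\to\infty}\bigcup_{T^ny=z_0}B\bigl(y,e^{-S_nf(y)}\bigr).
\]
Apply the Hausdorff--Cantelli lemma (Lemma~\ref{lem:hausdorff-cantelli}) with the cover $\mathcal C_n$ consisting of these balls. For $s>s(f)$, the partition sum satisfies
\[
Z_n(s)\asymp\sum_{T^ny=z_0}e^{-sS_nf(y)}.
\]
By the standard fact that pressure can be computed from preimage sums of a point in the Julia set, which follows from the Markov structure and Hölder distortion, $Z_n(s)\le e^{n(P(T,-sf)+\eta)}$ for any $\eta>0$ and all large $n$. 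Choosing $\eta<-P(T,-sf)$ makes $\sum_n Z_n(s)$ converge. Therefore $\dim_{\mathrm H}D(T,z_0,f)\le s$, and letting $s\downarrow s(f)$ gives the upper bound.

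\textbf{Lower bound.} Fix $s<s(f)$, so that $P(T,-sf)>0$. We build a Cantor subset $K\subseteq D(T,z_0,f)$ and a measure $\nu$ on it, and then apply the mass distribution principle (Proposition~\ref{prop:mass-distribution}).

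The construction runs inductively along a rapidly increasing sequence of times $n_1<n_2<\cdots$. At stage $k$ we have a collection of disjoint balls $B(y,e^{-S_{n_k}f(y)})$ with $T^{n_k}y=z_0$. Inside each such ball $B$, we use the expansion of $T^{n_k}$ on $B$, which is conformal with distortion controlled by $f\ge\log|T'|$. Pulling back by the inverse branch, the ball $B$ corresponds to a ball of radius roughly $e^{-S_{n_k}f(y)}|(T^{n_k})'(y)|\le 1$ around $z_0$. Inside it we choose the points $y'$ with $T^{n_{k+1}}y'=z_0$ such that $T^{n_k}y'$ is a preimage of $z_0$ of order $m=n_{k+1}-n_k$. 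We keep only a well-separated subfamily of these, weighted according to the equilibrium state $\mu_{-sf}$, that is, with weights proportional to $e^{-sS_mf}$ along the new block. Each chosen $y'$ contributes the ball $B(y',e^{-S_{n_{k+1}}f(y')})$. The mass of $B$ is distributed among these children in proportion to these weights, and $\nu$ is the limit of the resulting measures. Every point of $K$ lies in infinitely many target balls, so $K\subseteq D(T,z_0,f)$.

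The key estimate is $\nu(B(x,r))\lesssim r^{s-\epsilon}$. For a construction ball of radius $\rho=e^{-S_nf(y)}$, the Gibbs property gives mass about $\exp(-sS_nf-nP(T,-sf))\le\rho^{s}$. The positivity of $P(T,-sf)$ provides the exponential room needed to absorb distortion constants and the losses from separation.

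\textbf{Main obstacle.} The hardest step is estimating $\nu(B(x,r))$ at intermediate scales, namely when $r$ lies between the radius of a target ball at level $k$ and the radii of its children. At these scales $r$ exceeds the size of the level-$(k+1)$ target balls inside a level-$k$ ball. One has to count how many children meet $B(x,r)$, using the fact that the preimage points are roughly uniformly distributed with respect to the pullback of $\mu_{-sf}$. This requires a local counting lemma, essentially a conformal ubiquity statement, derived from the Gibbs property and bounded distortion. The same lemma is what guarantees that $f\ge\log|T'|$ keeps the target balls inside the cylinders where distortion is controlled. Once this estimate is in place, letting $n_k$ grow fast and taking $s\uparrow s(f)$ yields $\dim_{\mathrm H}D(T,z_0,f)\ge s(f)$.
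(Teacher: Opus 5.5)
The survey does not prove this theorem. It quotes it from \cite{HillVelani1997}, and the only mechanism it later points to is Wang--Zhang's dynamical dimension transference principle. With $g=\log|T'|$ and the non-negative potential $f-\log|T'|$, that principle contains this result, and it gets the lower bound from a dynamical ubiquity hypothesis rather than an explicit construction. Your direct route (natural covers for the upper bound, a Cantor measure with Gibbs weights for the lower bound) is the standard alternative. Your upper bound is complete. One small correction: expanding means $|(T^n)'|\ge c\,\theta^n$ with $\theta>1$, not $|T'|>1$ pointwise in the spherical metric. So the strict decrease of $s\mapsto P(T,-sf)$ should come from $S_nf\ge n\log\theta+\log c$.

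The lower bound is only a sketch, and it is thin exactly where the hypotheses enter. Notation:
\begin{itemize}
\item $P_s:=P(T,-sf)>0$;
\item $m_s$ is the $e^{P_s}$-conformal measure for $-sf$, equivalent to $\mu_{-sf}$ with bounded density;
\item for a level-$k$ ball $B_k=B(y,\rho_k)$ with $\rho_k=e^{-S_{n_k}f(y)}$, set $\varrho_k:=\rho_k|(T^{n_k})'(y)|=e^{-S_{n_k}(f-\log|T'|)(y)}$, the radius of $T^{n_k}B_k$.
\end{itemize}

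\textbf{The mass formula.} As stated it is wrong. The children of $B_k$ come only from preimages landing in $B(z_0,\varrho_k)$. These carry only a proportion $\asymp m_s(B(z_0,\varrho_k))$ of the total weight $\asymp e^{(n_{k+1}-n_k)P_s}$. After renormalisation, up to a factor $C^k$,
\[
\nu(B_k)\asymp e^{-sS_{n_k}f(y)-n_kP_s}\prod_{j<k}m_s\bigl(B(z_0,\varrho_j)\bigr)^{-1},
\]
where $\varrho_j$ is taken at the level-$j$ ancestor. The product is not a distortion constant. It is typically of size $e^{c(n_1+\cdots+n_{k-1})}$. Absorbing it into $e^{-n_kP_s}$ is the real reason the $n_k$ must grow fast; once that is done, $\nu(B_k)\lesssim\rho_k^s$.

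\textbf{The intermediate scales.} Here you need two concrete facts.

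(i) A uniform, non-asymptotic counting bound. Let $q=n_{k+1}-n_k$. For $x'\in J$ and $r'$ above the level-$q$ cylinder scale at $x'$,
\begin{align*}
\sum_{w\in T^{-q}z_0\cap B(x',r')}e^{-sS_qf(w)}&\lesssim e^{qP_s}\,m_s\bigl(B(x',r')\bigr),\\
\sum_{w\in T^{-q}z_0\cap B(z_0,\varrho_k/2)}e^{-sS_qf(w)}&\gtrsim e^{qP_s}\,m_s\bigl(B(z_0,\varrho_k)\bigr).
\end{align*}
Both follow, once $q\gg n_k$, from $\mathcal L^n_{-sf}\one\asymp e^{nP_s}$ uniformly on $J$, Koebe distortion and topological exactness. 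An equidistribution statement for preimages, which ``roughly uniformly distributed'' suggests, would not do. It is not uniform over balls down to scale $e^{-cq}$.

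(ii) Monotonicity. By Koebe, $m_s(B(x,r))\asymp e^{-jP_s-sS_jf(x)}$ where $|(T^j)'(x)|\asymp r^{-1}$. Hence $m_s(B(x,r))/r^s\asymp\exp\bigl(-jP_s-sS_j(f-\log|T'|)(x)\bigr)$ is non-decreasing in $r$ up to constants. This, more than keeping balls inside cylinders, is where $f\ge\log|T'|$ and $P_s\ge0$ are really used.

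Take $x'=T^{n_k}x$ and $r'=r|(T^{n_k})'(y)|\le\varrho_k$. Using (i), (ii) and doubling of $m_s$,
\begin{align*}
\nu\bigl(B(x,r)\bigr)&\lesssim\nu(B_k)\,\frac{m_s(B(x',r'))}{m_s(B(z_0,\varrho_k))}
\lesssim|(T^{n_k})'(y)|^{-s}\,\varrho_k^{\,s}\,\frac{m_s(B(x',r'))}{m_s(B(z_0,\varrho_k))}\\
&\lesssim|(T^{n_k})'(y)|^{-s}\,(r')^{s}=r^s.
\end{align*}
For $\rho_{k+1}\le r$ below the children's cylinder scale, $B(x,r)$ meets only boundedly many children, each of mass $\lesssim\rho_{k+1}^s\le r^s$.

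No thinning of the children is needed: shrinking all target radii by a fixed factor already makes them disjoint. Arbitrary thinning would in fact break the lower count in (i). With these ingredients supplied, your construction gives $\dimH D(T,z_0,f)\ge s$ for every $s<s(f)$.
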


%The connection between Theorem~\ref{thm:hill-velani-rational-pressure} and the forward shrinking target problem follows from bounded distortion.
For $\tau\ge0$, taking $f_\tau=\log|T'|+\tau,$ we have
\[
 e^{-S_nf_\tau(y)}  =\frac{e^{-n\tau}}{|(T^n)'(y)|}.
\]
By the bounded distortion property of $T$, for each $y\in T^{-n}(z_0)$, the intersection of $T^{-n}B(z_0,e^{-n\tau})$ and each cylinder set of level $n$ has diameter comparable to $e^{-S_nf_\tau(y)}$, with comparison constants independent of $n$ and $y$.  Hence
\[\dim_{\mathrm H}W(T,z_0,\psi_\tau)=\dim_{\mathrm H}D(T,z_0,f_\tau),\]
where $\psi_\tau(n)=e^{-n\tau}$. Since
\[
 P\bigl(T,-s(\log|T'|+\tau)\bigr)=0 \quad\Longleftrightarrow\quad P\bigl(T,-s\log|T'|\bigr)=s\tau,
\]
Theorem~\ref{thm:hill-velani-rational-pressure}  provides a precise dimensional formula for $W(T,z_0,\psi_\tau)$ stated as follows. 
\begin{corollary}[Hill--Velani {\cite{HillVelani1997}}]
\label{cor:rational-shrinking-target-dimensions}
For any $z_0\in J$ and $\tau\ge 0$,
\[
   \dim_{\mathrm H}W(T,z_0,\psi_\tau)=t(\psi_\tau).
\]
\end{corollary}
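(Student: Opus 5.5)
The plan is to derive the corollary from Theorem~\ref{thm:hill-velani-rational-pressure} applied to the potential $f_\tau=\log|T'|+\tau$. There are three steps: check the hypotheses, compare the two limsup sets up to bounded constants, and match the pressure equations.

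\emph{Hypotheses.} The function $\log|T'|$ is H\"older on $J$, since $T$ is expanding and hence $|T'|$ is bounded away from $0$ near $J$. Adding the constant $\tau\ge0$ keeps H\"older continuity and gives $f_\tau\ge\log|T'|$. So Theorem~\ref{thm:hill-velani-rational-pressure} applies and yields $\dim_{\mathrm H}D(T,z_0,f_\tau)=s(f_\tau)$.

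\emph{Comparison of sets.} I would show
\[
D(T,z_0,f_\tau^{+})\subseteq W(T,z_0,\psi_\tau)\subseteq D(T,z_0,f_\tau^{-}),
\]
where $f_\tau^{\pm}$ differ from $f_\tau$ by constants absorbed into finitely many scales. Fix $\rho>0$ smaller than the injectivity radius of the inverse branches of $T^n$ near $z_0$; expanding maps have such a uniform radius. For large $n$, $B(z_0,e^{-n\tau})$ lies inside $B(z_0,\rho)$. Hence $T^{-n}B(z_0,e^{-n\tau})$ is a disjoint union, over $y\in T^{-n}z_0$, of the images $g_y(B(z_0,e^{-n\tau}))$, where $g_y$ is the inverse branch of $T^n$ with $g_y(z_0)=y$. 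By Koebe distortion (bounded distortion), each such image satisfies
\[
B\bigl(y,c^{-1}e^{-S_nf_\tau(y)}\bigr)\subseteq g_y\bigl(B(z_0,e^{-n\tau})\bigr)\subseteq B\bigl(y,c\,e^{-S_nf_\tau(y)}\bigr),
\]
with $c$ independent of $n$ and $y$, using $|(T^n)'(y)|=e^{S_n\log|T'|(y)}$. The multiplicative constant $c$ costs nothing. The dimension formula from Theorem~\ref{thm:hill-velani-rational-pressure} is insensitive to replacing $e^{-S_nf}$ by $c^{\pm1}e^{-S_nf}$, because the theorem's proof uses covers and Cantor constructions at scales $e^{-S_nf}$ that change only by bounded factors. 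Alternatively, one can absorb $c$ by perturbing $\tau$ to $\tau\pm\varepsilon$, since $c^{\pm1}e^{-n\tau}$ is eventually squeezed between $e^{-n(\tau+\varepsilon)}$ and $e^{-n(\tau-\varepsilon)}$. Continuity of $s(f_{\tau})$ in $\tau$ then closes the gap. Either way, $\dim_{\mathrm H}W(T,z_0,\psi_\tau)=s(f_\tau)$.

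\emph{Pressure equations.} Since $S_n$ of a constant is $n\tau$, we have $P(T,-s\log|T'|-s\tau)=P(T,-s\log|T'|)-s\tau$. So $s(f_\tau)$ solves $P(T,-s\log|T'|)=s\tau$. Note that $\alpha(\psi_\tau)=\tau$. The function $s\mapsto P(T,-s\log|T'|)-s\tau$ is strictly decreasing, because $\log|T'|\ge$ a positive constant for some iterate. It is continuous, equals $\log d>0$ at $s=0$ (topological entropy on $J$), and tends to $-\infty$. Hence the root is unique and equals $t(\psi_\tau)$.

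The main obstacle is the comparison step. The uniform two-sided distortion control of the components of $T^{-n}B(z_0,e^{-n\tau})$ must be justified, and the constant $c$ must be shown harmless. I would handle $c$ through the $\tau\pm\varepsilon$ squeeze, using continuity of $\tau\mapsto t(\psi_\tau)$. That continuity follows from the implicit function argument, since the pressure is continuous in $s$ with a strictly negative slope bound.
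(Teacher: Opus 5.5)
Your proposal is correct and follows essentially the same route as the paper: apply Theorem~\ref{thm:hill-velani-rational-pressure} to $f_\tau=\log|T'|+\tau$, use bounded (Koebe) distortion to see that the components of $T^{-n}B(z_0,e^{-n\tau})$ are comparable to the balls $B(y,e^{-S_nf_\tau(y)})$ with $y\in T^{-n}z_0$, and rewrite $P(T,-sf_\tau)=0$ as $P(T,-s\log|T'|)=s\tau$. Your $\tau\pm\varepsilon$ squeeze makes explicit why the distortion constant is harmless, which the paper takes for granted; one small caveat is the case $\tau=0$, where $f_{\tau}-\varepsilon$ leaves the admissible class $f\ge\log|T'|$ and $B(z_0,e^{-n\tau})$ is not small, but there the upper bound $\dim_{\mathrm H}W\le\dim_{\mathrm H}J=\delta=t(\psi_0)$ is immediate.
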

Thus the upper bound in \eqref{eq:rational-initial-bounds} is exact for $\psi_\tau$.
Moreover, when $\psi(n)=n^{-\tau }$, the formula \eqref{eq:hill-velani-derivative-adapted} may be derived  by taking $ f_\tau^{*}=(1+\tau)\log|T'|.$ Then 
$$ e^{-S_nf_\tau^{*}(y)} = |(T^n)'(y)|^{-(1+\tau)},$$ and the pressure equation becomes
\[
 P\bigl(T,-s(1+\tau)\log|T'|\bigr)=0.
\]
Since  $ P\bigl(T,-\delta\log|T'|\bigr)=0,$ its solution is $s=\delta/(1+\tau)$, in agreement with \eqref{eq:hill-velani-derivative-adapted}.

When $f$ satisfies the strict inequality
\[
 f(z)>\log|T'(z)| \quad z\in J,
\]
Hill and Velani \cite{HillVelani2002} subsequently proved 
\[ \mathcal{H}^{s(f)}\bigl(D(T,z_0,f)\bigr) \in\{0,\infty\},\]
where $s(f)$ is the solution of \eqref{eq:rational-general-pressure}.

\subsubsection{Expanding Markov maps}\label{subsubsec:expanding-markov-maps}

Let $\mathcal A$ be a finite or countable alphabet, and assume that $\#\mathcal A\ge 2$. Let $\{I_a\}_{a\in\mathcal A}$ be a  family of  disjoint subintervals of $[0,1]$ with non-empty interiors. For $\omega=(\omega_1,\ldots,\omega_n)\in\mathcal A^n$, write $ I_\omega= \bigcap_{k=0}^{n-1}T^{-k}I_{\omega_{k+1}}.$  Following \cite{Reeve2011}, a map
\[
 T:\bigcup_{a\in\mathcal A}I_a\longrightarrow[0,1].
\]
is called an \emph{expanding Markov map} if the following conditions hold:
\begin{enumerate}
\item For each ${a\in\mathcal A}$,  the restriction $T|_{I_a}$ is $C^1$ and maps the interior of $I_a$ onto $(0,1)$.

\item There exist $\xi>1$ and $N\ge1$ such that $$|(T^n)'(x)|>\xi^n$$
for all $n\ge N$ and all $x\in \bigcup_{\omega\in {\mathcal A}^n}I_{\omega}$. 

\item There exists some sequence $\{\rho_n\}_n\subset \mathbb R$ with $\rho_n\to0$ such that for all $n\ge1$, $\omega\in {\mathcal A}^n$ and $x,y\in I_{\omega}$,
\[e^{-n\rho_n}\le\frac{|(T^n)'(x)|}{|(T^n)'(y)|}\le e^{n\rho_n}\]
\end{enumerate}
Condition~(1) is the full-branch property.  We say that $T$ has \emph{finite branches} if $\#\mathcal{A}<\infty$, whereas \emph{countable branches} if  $\mathcal{A}$ is countably infinite.
%The alphabet \(\mathcal A\) may be finite or countable.  For an admissible word \(\omega\) of length \(n\), denote the corresponding cylinder by \(I_\omega\).

The class of expanding Markov maps defined above includes several familiar examples. The map $x\mapsto bx\pmod 1$, with $b\ge2$ an integer, has finite branches, whereas the Gauss map and $\alpha$-L\"uroth map, including the classical L\"uroth map, have countably many full branches. 

The repeller of $T$ is defined as
\[
 \Lambda:=\bigcap_{n\geq0}T^{-n}([0,1]).
\]
For a function $g:\Lambda\to\mathbb{R}$, define its $n$-th level variation by
\[
 \operatorname{var}_n(g): = \sup_{\omega\in\mathcal A^n} \sup_{x,y\in I_\omega} |g(x)-g(y)|.
\]
We say that $g$ satisfies the \emph{tempered distortion condition} if 
\[
 \operatorname{var}_1(g)<\infty \quad\text{and}\quad \lim_{n\to\infty} \frac{\operatorname{var}_n(S_ng)}{n}=0.
\]
Condition~(3) implies that $ \log|T'|$ satisfies the tempered distortion condition.

Let $\varphi:\Lambda\to[0,\infty)$ satisfy the tempered distortion
condition and set
\[
   \psi_\varphi(n,x):=\exp\bigl(-S_n\varphi(x)\bigr).
\]
For $y\in \overline{\Lambda}$, the corresponding shrinking target set is
\begin{equation*}\label{eq:markov-potential-target}
 W(T,y,\psi_\varphi)= \left\{x\in\Lambda: T^nx\in B(y,\psi_\varphi(n,x)) \ \text{for infinitely many }n
 \right\}.
\end{equation*}
If $\varphi\equiv\alpha\ge0$, then $\psi_\varphi(n,x)=e^{-n\alpha}$. In this case we write $\psi_\alpha(n):=e^{-n\alpha}$. 

The pressure used here is defined directly through  partition sums over cylinders. For a real-valued function $g$ satisfying  tempered distortion condition, the pressure function is defined by
\begin{equation}\label{eq:markov-pressure}
 P(g):= \lim_{n\to\infty}\frac{1}{n} \log \sum_{\omega\in\mathcal A^n} \exp\left(   \sup_{x\in I_\omega}S_ng(x) \right).
\end{equation}
When $\mathcal A$ is countable, the sum and the pressure may be infinite. Define
\begin{equation*}\label{eq:markov-critical-exponent}
 s(\varphi): =\inf\left\{ s\ge0: P\bigl(-s(\log|T'|+\varphi)\bigr)\le0 \right\}.
\end{equation*}

For finitely many branches, the dimension formula is the interval map counterpart of the result of Hill and Velani for expanding rational maps
\cite{HillVelani1997}; see also \cite[Theorem~1]{Reeve2011}. 
%{\color{red} An earlier result of Urba\'nski \cite{Urbanski2002}, formulated for countable conformal iterated function systems, can be applied to the inverse branches of a countable expanding Markov map. His result implies that
%\[
%   \dim_{\mathrm H}W(T,y,\psi_\varphi)=s(\varphi)
%\]
%for  $y\in\Lambda$ satisfying $\sup_{n\geq0}\bigl|T'(T^n y)\bigr|<\infty$; see also \cite[Theorem~2]{Reeve2011}.}
An earlier result of Urba\'nski \cite{Urbanski2002} established the corresponding dimension formula for countable conformal iterated function systems. Applied to the inverse branches of a countable expanding Markov map, it gives
\[\dim_{\mathrm H}W(T,y,\psi_\varphi)=s(\varphi)\]
for any $y\in\Lambda$ admitting a symbolic coding that uses only finitely many inverse branches; see also \cite[Theorem~2]{Reeve2011}. Reeve \cite{Reeve2011} subsequently extended the formula to each $y\in\Lambda$.

 \begin{theorem}[Reeve {\cite{Reeve2011}}]
\label{thm:reeve-countable-markov}
Let $T$ be an expanding Markov map with repeller $\Lambda$, and let
$\varphi:\Lambda\to[0,\infty)$ satisfy the tempered distortion
condition. Then, for any $y\in\Lambda$,
\begin{equation}\label{eq:reeve-dimension-formula}
 \dim_{\mathrm H}W(T,y,\psi_\varphi)=s(\varphi).
\end{equation}
If, in addition, $\overline{\Lambda}=[0,1]$, then \eqref{eq:reeve-dimension-formula} holds for every $y\in[0,1]$.
\end{theorem}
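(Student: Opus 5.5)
The plan is to prove the two inequalities $\dim_{\mathrm H}W(T,y,\psi_\varphi)\le s(\varphi)$ and $\ge s(\varphi)$ separately. Both work at the level of cylinders $I_\omega$, $\omega\in\mathcal A^n$, and use the tempered distortion of $\log|T'|$ and $\varphi$ to replace pointwise quantities by cylinder suprema at a cost of $e^{o(n)}$.

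\textbf{Upper bound.} For each $n$, the $n$-th level set $\{x: |T^nx-y|<\psi_\varphi(n,x)\}$ meets each $I_\omega$ with $\omega\in\mathcal A^n$ in a set of diameter at most
\[
e^{n\rho_n}\,|(T^n)'(x_\omega)|^{-1}\exp\bigl(-S_n\varphi(x_\omega)+\operatorname{var}_n(S_n\varphi)\bigr),
\]
where $x_\omega$ is a chosen point of $I_\omega$. This holds because $T^n|_{I_\omega}$ is a $C^1$ diffeomorphism onto $(0,1)$ with distortion $e^{n\rho_n}$. Covering by these sets, the Hausdorff--Cantelli lemma (Lemma~\ref{lem:hausdorff-cantelli}) reduces the problem to summability of
\[
\sum_n e^{o(n)}\sum_{\omega\in\mathcal A^n}\exp\Bigl(\sup_{I_\omega}-sS_n(\log|T'|+\varphi)\Bigr).
\]
This series converges whenever $P(-s(\log|T'|+\varphi))<0$. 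Since $t\mapsto P(-t(\log|T'|+\varphi))$ is strictly decreasing where finite (slope at most $-\log\xi$), this gives the bound for all $s>s(\varphi)$. One technical point arises in the countable case: the pressure may be $+\infty$ for small $s$, and the infimum in the definition of $s(\varphi)$ handles this.

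\textbf{Lower bound.} First reduce to finitely many branches. For finite $\mathcal A'\subset\mathcal A$, the pressures $P_{\mathcal A'}$ of the restricted system increase to $P$, by the standard approximation property of countable Markov pressure, which follows from the cylinder definition and the tempered distortion. Hence $s_{\mathcal A'}(\varphi)\to s(\varphi)$, and it suffices to show $\dim_{\mathrm H} W\ge s_{\mathcal A'}(\varphi)-\epsilon$. On the finite subsystem, for $s<s_{\mathcal A'}$, I would take a Gibbs-type measure $\mu$ for the potential $-s(\log|T'|+\varphi)$ shifted by its positive pressure, or use the equilibrium measure from the variational principle. I would then build a Cantor subset of $W$.

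The construction runs as follows. Choose a rapidly increasing sequence of times $n_k$. At stage $k$, inside each surviving cylinder, use $\mu$-typical words of length $n_k-n_{k-1}$ with Birkhoff averages close to the integrals. Then append the symbolic approximation of $y$: words $\mathbf j$ of length $\ell_k$ such that $T^{n_k}x\in I_{\mathbf j}\subset B(y,\psi_\varphi(n_k,x))$. The length $\ell_k$ is dictated by $S_{n_k}\varphi(x)$ via the expansion $\xi$.

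If $y$ is not in the interior of a cylinder, or uses infinitely many branches, one approximates $y$ by a point $y'$ coded in $\mathcal A'$ within distance $\psi/2$. This is where the additional hypothesis $\overline\Lambda=[0,1]$ enters for general $y\in[0,1]$: it guarantees that cylinders of the finite subsystem come arbitrarily close to $y$ in a controlled way. Distribute mass according to $\mu$ on the free blocks and uniformly (one child) on the forced blocks.

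\textbf{Main obstacle.} The hardest step is the mass distribution estimate $\nu(B(x,r))\lesssim r^{s-\epsilon}$ at all scales $r$. Two issues arise. Balls at intermediate scales, inside a forced block, see a sudden drop in diameter without loss of mass. Balls near cylinder boundaries may meet many cylinders. I would handle the first by choosing $n_k$ so sparse that the forced blocks are negligible compared to $n_k$, and by checking that the exponent $s$ exactly balances $\psi_\varphi\cdot|(T^{n_k})'|^{-1}$ through the pressure equation. I would handle the second by a gap argument: keep only words avoiding the two extreme branches of $\mathcal A'$, so that distinct surviving cylinders of level $n$ are separated by a gap comparable to their lengths. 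Proposition~\ref{prop:mass-distribution} then gives $\dim_{\mathrm H}W\ge s-\epsilon$, which completes the proof of \eqref{eq:reeve-dimension-formula}.
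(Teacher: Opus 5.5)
Your upper bound is sound. One interval per $n$-cylinder, the Hausdorff--Cantelli lemma and the slope estimate $P(-t'(\log|T'|+\varphi))\le P(-t(\log|T'|+\varphi))-(t'-t)\log\xi$ give $\dim_{\mathrm H}W(T,y,\psi_\varphi)\le s(\varphi)$ for every $y\in[0,1]$. Reducing the \emph{free} blocks to a finite subalphabet $\mathcal A'$ is also the right framework. (The survey only cites Reeve here, so I am comparing against what such a proof must contain.)

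\textbf{Gap for $y\in\Lambda$.} The treatment of the target fails. Approximating $y$ by a point $y'$ coded in $\mathcal A'$ within distance $\psi/2$ is impossible in general. The closure of the $\mathcal A'$-limit set is a fixed compact set, typically at positive distance from $y$, whereas $\psi\to0$. So the forced block cannot live in $\mathcal A'$: it must be a prefix $\omega|_k$ of the coding $\omega$ of $y$. For general $y\in\Lambda$ this coding uses infinitely many branches, which is exactly what separates Reeve's theorem from Urba\'nski's result for finitely accessible centres.

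With such forced blocks, your rule ``$\ell_k$ is dictated by $S_{n_k}\varphi$ via $\xi$'' breaks down. The constant $\xi$ only bounds $|I_{\omega|_k}|$ from above, and $|I_{\omega|_{k+1}}|/|I_{\omega|_k}|\approx|I_{\omega_{k+1}}|$ has no lower bound. So at a pre-assigned time $n_k$, the largest $I_{\omega|_k}$ inside $B(y,e^{-S_{n_k}\varphi(x)})$ may be smaller than any fixed power of the radius, and the mass distribution estimate fails.

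The missing idea is to reverse the order:
\begin{itemize}
\item First fix a scale $k$ of $y$. Then let the hitting time $n=n(x)$ be the time at which $\sup_{I_{x|_n}}S_n\varphi$ crosses $\log\bigl(1/(2|I_{\omega|_k}|)\bigr)$.
\item Since $\varphi\ge0$ and $S_n\varphi$ grows by bounded steps along $\mathcal A'$-coded blocks, this gives $I_{\omega|_k}\subset B(y,\psi_\varphi(n,x))$ and $|I_{\omega|_k}|\ge\psi_\varphi(n,x)e^{-O(1)-o(n)}$.
\item Then $k=O(n)$ keeps the distortion $e^{k\rho_k}$ subexponential in $n$.
\item Sparse stages absorb the fixed, but possibly enormous, contribution of the large symbols of $\omega|_k$ to all later $S_m\varphi$ and $\log|(T^m)'|$.
\end{itemize}
Your construction must also handle the resulting branch-dependent hitting times.

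\textbf{Gap for $\overline\Lambda=[0,1]$.} Your argument for the second assertion does not work either. Density of $\Lambda$ does not bring finite-subsystem cylinders near $y$. Even all cylinders together need not match the target: for the Gauss map and $y=0$, every cylinder inside $(0,r)$ has length at most $r^2$. So one must instead use the $\asymp 1/r$ adjacent cylinders $I_b$, $b\asymp1/r$, which fill $(0,r)$, spreading mass proportionally to length.

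Moreover, with the definition recalled in the survey, density alone cannot suffice. Here is a counterexample.
\begin{itemize}
\item From each of the $2^j$ intervals surviving stage $j$, remove the open middle interval of relative length $\epsilon_j$, where $\epsilon_0=\frac12$ and $\epsilon_j=2^{-2^j}$ for $j\ge1$.
\item Let $\{I_a\}$ be the gaps, and let $T$ map each $I_a$ affinely and increasingly onto $(0,1)$.
\item Then $|T'|\ge2$, there is no distortion, and $\Lambda$ is a dense $G_\delta$.
\item $P_0(s):=\log\sum_a|I_a|^s$ is finite for $s>0$ and tends to $\infty$ as $s\downarrow0$, so $s(\alpha)>0$ for $\varphi\equiv\alpha$.
\item Let $L_j\asymp2^{-j}$ be the length of stage-$j$ intervals. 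If $L_{j+1}\le r<L_j$, every gap meeting $[0,r)$ lies in $[0,L_j]$ and has stage $\ge j$. Hence $\sum_{I_a\cap[0,r)\neq\varnothing}|I_a|^s\lesssim_s 2^{-s2^j}$ with $2^j\gtrsim1/r$.
\item Your own covering then gives $\sum_n e^{nP_0(s)}\exp(-c_se^{n\alpha})<\infty$, so $\dim_{\mathrm H}W(T,0,\psi_\alpha)=0<s(\alpha)$.
\end{itemize}
Any proof of the second assertion therefore needs an extra hypothesis on how $\bigcup_aI_a$ fills neighbourhoods of $y$ (as for the Gauss map), together with multi-cylinder forced blocks.
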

 The location of the target centre $y$ becomes important when $\overline{\Lambda}\neq[0,1]$. Reeve showed that, for any strictly decreasing function $\Psi:\mathbb N\to(0,\infty)$ tending to zero, there exist an expanding Markov map $T$ with repeller $\Lambda$ and a point $y\in\overline{\Lambda}\setminus\Lambda$ such that
 {\[\dim_{\mathrm H}\left(\limsup_{n\to\infty}\{x:T^nx\in B(y,\Psi(n))\}\right)=0.\]
 In particular, for $\Psi=\psi_\alpha$, the shrinking target set may have dimension zero even though $s(\alpha)$ is strictly positive.} Thus, \eqref{eq:reeve-dimension-formula} cannot in general be extended from $y\in\Lambda$ to every $y\in\overline{\Lambda}$.
 
 %Reeve constructed examples in which $y\in\overline{\Lambda}\setminus\Lambda$ and 
%\[
% \dim_{\mathrm H}W(T,y,\psi)=0
%\]
%although $s(\varphi)>0$. Consequently, \eqref{eq:reeve-dimension-formula} does not extend from $y\in\Lambda$ to every point of $\overline{\Lambda}$ without an additional assumption such as $\overline{\Lambda}=[0,1]$.

%The class above contains several special cases.The results related to cylinder-target for $\alpha$-L\"uroth transformations were proved by Shen and Li \cite{ShenLi2016}.  

%Related work has considered more general target schemes and other classes of interval maps. 
{Related work has allowed the target centres and radii to vary more generally and has treated other classes of interval maps.} For the Gauss map, Li, Wang, Wu and Xu \cite{LiWangWuXu2014} allowed the target centre to vary with $n$, and
 obtained Hausdorff dimension formulae for shrinking rates depending only on $n$ and for rates determined by ergodic sums. Shen and Li \cite{ShenLi2016} determined the Hausdorff dimension for $\alpha$-L\"uroth transformations when the targets are cylinders containing a fixed point. For non-integer $\beta>1$, the $\beta$-transformation is generally not full-branch, and related results will be discussed in Section~3.4. For more general expanding systems, Fernández, Melián and Pestana \cite{FernandezMelianPestana2007} obtained bounds estimates on the  dimension of shrinking target sets for almost all target centre. Their later work \cite{FernandezMelianPestana2012} focused on Borel--Cantelli results on hitting times for uniformly expanding maps and certain non-uniformly expanding maps.

\subsubsection{Conformal iterated function systems}
The precise definition of a conformal iterated function system
varies slightly in the literature. For the finite systems below, we use the following common framework.%For the finite systems considered in the first part of this subsection, we use the following common framework. It retains the regularity assumptions on the domain used by Allen and B\'ar\'any \cite{AllenBarany2021} and makes explicit the neighbourhood extension appearing in Baker \cite{Baker2019SelfConformal}. This convention is adopted for uniformity and is not intended to give the weakest possible hypotheses.

Let $d\ge1$,  $D\subset\mathbb R^d$ be a compact, simply connected set with non-empty interior such that  $D=\overline{D^\circ}$, and let $V$ be an open connected neighbourhood of $D$. 

 A map $f:V\to\mathbb R^d$ is said to be of class $C^{1+\varepsilon}$ if $f$ is continuously differentiable and there exists $c>0$ such that 
\[
   | f'(\bfx)-f'(\bfy)| \le c | \bfx-\bfy |^\varepsilon
\]
for all $\bfx,\bfy\in V$. Here, we adopt the notation that $f'(\bfx)$ is the Jacobian matrix of $f$ at $\bfx$.  A $C^1$ map $f:V\to\mathbb R^d$ is called conformal if, for
all $\bfx\in V$ and $\bfy \in \mathbb R^d$,
 $$|f'(\bfx)\bfy| = |f'(\bfx)|\, |\bfy|.$$

Let $\mathcal A$ be a finite alphabet with $\#\mathcal A\geq2$. 
A finite $C^{1+\varepsilon}$ conformal iterated function system is a
family
\[
   \Phi=\{\varphi_i:D\to D\}_{i\in\mathcal A}
\]
of strict contractions such that each $\varphi_i$ extends to an
injective $C^{1+\varepsilon}$ conformal map on $V$, and there exist
constants $0<a_{\min}\leq a_{\max}<1$ such that for all $i\in\mathcal A$,
\[ a_{\min}\leq \inf_{\bfx\in V}|\varphi_i'(\bfx)|\leq\sup_{\bfx\in V}|\varphi_i'(\bfx)|\leq a_{\max}.\]
The attractor of $\Phi$ is the unique non-empty compact set $X$
satisfying
\[
   X=\bigcup_{i\in\mathcal A}\varphi_i(X).
\]

For a word $\mathbf i=(i_1,\ldots,i_n)\in\mathcal{A}^n$, write
\[ \varphi_{\mathbf i}:=\varphi_{i_1}\circ\cdots\circ\varphi_{i_n},\quad X_{\mathbf i}:=\varphi_{\mathbf i}(X),\]
and set
\[
   |\varphi_{\mathbf i}'|:=\sup_{\bfx\in D}|\varphi_{\mathbf i}'(\bfx)|.
\]
%The bounded distortion property gives a constant $C\geq1$ such that
%\[
 %  C^{-1}\leq\frac{|\varphi_{\mathbf i}'(x)|}{|\varphi_{\mathbf i}'(y)|}
%   \leq C
%\]
%for any $\mathbf i\in\mathcal A^*$ and $x,y\in D$. In particular, $\diam(X_{\mathbf i})$ is comparable to $\|\varphi_{\mathbf i}'\|$, with comparison constants independent of $\mathbf i$.
The bounded distortion property yields that $\diam(X_{\mathbf i})$ is comparable to $|\varphi_{\mathbf i}'|$, with comparison constants independent of $\mathbf i$. Consequently, the pressure defined below is unchanged if $|\varphi_{\mathbf i}'|$ is replaced by $\diam(X_{\mathbf i})$.

For $s\ge0$, define the pressure 
\begin{equation*}\label{eq:self-conformal-pressure}
 P_\Phi(s):=\lim_{n\to\infty}\frac{1}{n}
 \log\sum_{\mathbf{i}\in\mathcal A^n}|\varphi_{\mathbf{i}}'|^s.
\end{equation*}
The limit exists due to the submultiplicativity of the map $n\mapsto \sum_{\mathbf{i}\in\cA^n}|\varphi_{\mathbf{i}}'|^s$. The conformality dimension of the conformal IFS $\Phi$ is defined as $\delta_\Phi:=P_{\Phi}^{-1}(0)$.

We say that $\Phi$ satisfies the \emph{open set condition} if there exists a non-empty open set $U\subset D$ such that
\[ \varphi_i(U)\subset U \quad\text{for every }i\in\mathcal A, \quad \varphi_i(U)\cap\varphi_j(U)=\varnothing \quad\text{whenever }i\neq j.
\]
Under this condition,  $\dimH X=\delta_\Phi$ and $ 0<\cH^{\delta_\Phi}(X)<\infty.$

 Given  $\bfz\in X$ and $\Psi:\mathcal A^*\to(0,\infty)$, the associated shrinking target set is
\begin{equation*}\label{eq:self-conformal-target}
 W(\Phi,\bfz,\Psi):=\left\{ \bfx\in X: \bfx\in B (\varphi_{\mathbf i}(\bfz),\Psi(\mathbf i)) \ \text{for infinitely many } \mathbf i\in\mathcal A^* \right\}.
\end{equation*}
A principal class of radii is obtained from a decreasing function
$\psi:\mathbb N\to(0,\infty)$ by setting
\[
   \Psi_\psi(\mathbf i):=\diam(X_{\mathbf i})\psi(|\mathbf i|).
\]

Baker \cite{Baker2019SelfConformal}  obtained the convergence statement for arbitrary radii and the full measure divergence statement for radii of the form
$\Psi_\psi$. In particular, under the open set condition, 
\begin{equation}\label{eq:baker-self-conformal}
 \sum_{\mathbf i\in\mathcal A^*}\Psi_\psi(\mathbf i)^{\delta_\Phi}=\infty
 \quad\Longrightarrow\quad
 \cH^{\delta_\Phi}\bigl( W(\Phi,\bfz,\Psi_\psi)\bigr)=\cH^{\delta_\Phi}(X).
\end{equation}
Combining this result with the mass transference principle gives that  for $s\le \delta_\Phi$, 
\begin{equation*}\label{eq:baker-self-conformal-an}
 \sum_{\mathbf i\in\mathcal A^*}\Psi_\psi(\mathbf i)^{\delta_\Phi}=\infty\quad\Longrightarrow\quad\cH^s\bigl(W(\Phi,\bfz,\Psi_\psi^{\,\delta_\Phi/s})\bigr)=\cH^s(X).
\end{equation*}
This statement concerns the modified radii $\Psi_\psi(\mathbf i)^{\delta_\Phi/s}$, rather than the original radii $\Psi_\psi(\mathbf i)$.

Allen and B\'ar\'any \cite{AllenBarany2021} obtained the zero-full law for Hausdorff measure for the original radii. 

\begin{theorem}[Allen--B\'ar\'any {\cite{AllenBarany2021}}]
\label{thm:allen-barany-conformal-ifs}
Suppose that $\Phi$ satisfies the open set condition and $\psi:\mathbb N\to(0,\infty)$ is decreasing. Then, for any $s\geq0$, $\bfz\in X$ and open ball $B\subset\mathbb R^d$,
\begin{equation*}\label{eq:allen-barany-self-conformal}
 \cH^s\bigl(W(\Phi,\bfz,\Psi_\psi)\cap B\bigr)
 =\begin{cases}
 0,&\displaystyle \text{if }
  \sum_{\mathbf i\in\mathcal{A}^*}\Psi_\psi(\mathbf i)^s<\infty,\\[2ex]
 \cH^s(X\cap B),
 &\displaystyle \text{if }
  \sum_{\mathbf i\in\mathcal{A}^*}\Psi_\psi(\mathbf i)^s=\infty.
 \end{cases}
\end{equation*}
\end{theorem}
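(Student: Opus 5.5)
The convergence half follows from the Hausdorff--Cantelli lemma (Lemma~\ref{lem:hausdorff-cantelli}). Index the limsup set by word length: for each $n$, take $E_n=\bigcup_{|\mathbf i|=n}B(\varphi_{\mathbf i}(\bfz),\Psi_\psi(\mathbf i))$, covered by the balls themselves, each of diameter at most $2\Psi_\psi(\mathbf i)$. Since $\diam X_{\mathbf i}\to0$ uniformly in $|\mathbf i|$ and $\psi$ is bounded (being decreasing), the diameters tend to $0$. The covering sum is at most $2^s\sum_{\mathbf i}\Psi_\psi(\mathbf i)^s<\infty$, so $\cH^s(W(\Phi,\bfz,\Psi_\psi))=0$, which gives the first case for every ball $B$. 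For $s>\delta_\Phi$ the series always converges, because $\sum_{|\mathbf i|=n}\diam(X_{\mathbf i})^s$ decays exponentially; also $\cH^s(X\cap B)=0$ there. So the divergence case is only meaningful for $s\le\delta_\Phi$.

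For the divergence half with $s\le\delta_\Phi$, I would reduce to Baker's $\delta_\Phi$-dimensional statement \eqref{eq:baker-self-conformal} and the mass transference principle (Theorem~\ref{thm:mass-transference}). The measure $m=\cH^{\delta_\Phi}|_X$ is $\delta_\Phi$-Ahlfors regular on $X$ under the open set condition. With $f(r)=r^s$, the enlarged balls are $B^f=B(\varphi_{\mathbf i}(\bfz),\Psi_\psi(\mathbf i)^{s/\delta_\Phi})$. It therefore suffices to show
\[
m\Bigl(B\cap\limsup_{\mathbf i} B\bigl(\varphi_{\mathbf i}(\bfz),\Psi_\psi(\mathbf i)^{s/\delta_\Phi}\bigr)\Bigr)=m(B)
\]
for every ball $B$. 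MTP would then give $\cH^s(W\cap B)=\cH^s(X\cap B)$; for $s<\delta_\Phi$ this value is $\infty$ when $B$ meets $X$.

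The obstacle is that the radii $\Psi_\psi(\mathbf i)^{s/\delta_\Phi}=\diam(X_{\mathbf i})^{s/\delta_\Phi}\psi(|\mathbf i|)^{s/\delta_\Phi}$ are \emph{not} of Baker's form $\diam(X_{\mathbf i})\tilde\psi(|\mathbf i|)$. The factor $\diam(X_{\mathbf i})^{s/\delta_\Phi-1}$ depends on the word and not only on its length, so \eqref{eq:baker-self-conformal} cannot be quoted directly. I would instead prove the needed full-measure statement directly, by a quasi-independence-on-average argument for the $\delta_\Phi$-conformal measure $\mu$, which is comparable to $m$.

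The targets $A_{\mathbf i}=B(\varphi_{\mathbf i}(\bfz),r_{\mathbf i})$ have $\mu(A_{\mathbf i})\asymp r_{\mathbf i}^{\delta_\Phi}=\Psi_\psi(\mathbf i)^s$, so $\sum\mu(A_{\mathbf i})=\infty$ by hypothesis. To estimate $\mu(A_{\mathbf i}\cap A_{\mathbf j})$, use bounded distortion and the self-conformality $\mu(\varphi_{\mathbf i}(E))\asymp|\varphi'_{\mathbf i}|^{\delta_\Phi}\mu(E)$. If neither of $\mathbf i,\mathbf j$ is a prefix of the other, the cylinders are disjoint up to neighbourhoods controlled by the open set condition, and the intersection is negligible. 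If $\mathbf j=\mathbf i\mathbf k$, pull back by $\varphi_{\mathbf i}$ and apply the same estimate at scale $|\varphi_{\mathbf i}'|$. Monotonicity of $\psi$ should give
\[
\sum_{\mathbf i,\mathbf j\in Q_N}\mu(A_{\mathbf i}\cap A_{\mathbf j})\ll\Bigl(\sum_{\mathbf i\in Q_N}\mu(A_{\mathbf i})\Bigr)^2,
\]
where $Q_N$ is a block of words chosen so that the partial sums over $Q_N$ are large. This is the step I would try first and expect to be hardest: the word-dependent exponent $s/\delta_\Phi$ is what breaks the product structure.

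Assuming this estimate, the Chung--Erd\H{o}s inequality gives positive measure of the limsup set. Localising to cylinders $X_{\mathbf u}$, again via pullback by $\varphi_{\mathbf u}$, shows the density is bounded below uniformly. The Lebesgue density theorem for the doubling measure $\mu$ then upgrades this to full measure in every ball, completing the reduction to MTP.

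If the direct estimate fails, the fallback is to pass from words to the Ahlfors-regular ball framework of MTP via a rectangle-free ``local ubiquity'' argument. One groups words into stopping-time families $\{\mathbf i:\diam X_{\mathbf i}\asymp\rho\}$ and applies the ubiquity version of MTP to each scale.
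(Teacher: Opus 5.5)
Your convergence half is correct, and so are the cases $s>\delta_\Phi$ (the series always converges) and $s=\delta_\Phi$ (this is exactly Baker's theorem). The divergence half for $0<s<\delta_\Phi$, however, rests on a statement that is false in general. That statement is full $\mathcal H^{\delta_\Phi}$-measure of $\limsup_{\mathbf i}A_{\mathbf i}$, where $A_{\mathbf i}=B(\varphi_{\mathbf i}(\bfz),r_{\mathbf i})$ and $r_{\mathbf i}=\Psi_\psi(\mathbf i)^{s/\delta_\Phi}$.

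So this is not merely a hard step. No quasi-independence estimate can deliver it, and your displayed second-moment bound must fail, since otherwise Chung--Erd\H{o}s would give positive measure. The heuristics you sketch for it are exactly the ones that break. The ratio $r_{\mathbf i}/\diam X_{\mathbf i}=(\diam X_{\mathbf i})^{s/\delta_\Phi-1}\psi(|\mathbf i|)^{s/\delta_\Phi}$ is unbounded and varies exponentially among words of the same length. Consequently:
\begin{itemize}
\item enlarged balls of incomparable words can overlap heavily, and the open set condition says nothing about them;
\item $A_{\mathbf i}$ need not lie in $\varphi_{\mathbf i}(V)$;
\item pulling back by $\varphi_{\mathbf i}$ inflates the radii by $|\varphi_{\mathbf i}'|^{s/\delta_\Phi-1}$, so the conditional problem is not a copy of the original. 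The same defect affects your localisation to $X_{\mathbf u}$.
\end{itemize}
Your ubiquity fallback meets the same obstruction. Inside a stopping family $\{\mathbf i:\diam X_{\mathbf i}\asymp\rho\}$ the radius still depends on $|\mathbf i|$. Moreover, as the example below shows, the words carrying the divergent sum are atypical for $\mathcal H^{\delta_\Phi}|_X$.

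For a concrete counterexample, take $\varphi_1(x)=ax$ and $\varphi_2(x)=bx+1-b$ with $a\neq b$, $a+b<1$. Let $\bfz=0$, $0<s<\delta:=\delta_\Phi$, and $\psi(n)=\bigl((a^s+b^s)^{-n}/n\bigr)^{1/s}$, which is decreasing.
\begin{itemize}
\item Divergence holds. If $\mathbf i$ has length $n$ and contains $k$ ones, then $\Psi_\psi(\mathbf i)^s=p^k(1-p)^{n-k}/n$ with $p=a^s/(a^s+b^s)$. Hence $\sum_{|\mathbf i|=n}\Psi_\psi(\mathbf i)^s=1/n$, and we are in the divergence case.
\item The normalised measure $\mu=\mathcal H^{\delta}|_X$ is Bernoulli with weights $(a^{\delta},b^{\delta})$, and $\mu(A_{\mathbf i})\asymp r_{\mathbf i}^{\delta}=p^k(1-p)^{n-k}/n$.
\item Words with $|k/n-p|\ge\varepsilon$ contribute at most a binomial tail $\lesssim e^{-2\varepsilon^2n}$.
\item For $|k/n-p|<\varepsilon$ one has $r_{\mathbf i}^{\delta}\le e^{-n(H(p)-O(\varepsilon))}$, where $H(t)=-t\log t-(1-t)\log(1-t)$. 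Also $\diam X_{\mathbf i}\ll r_{\mathbf i}$, so by the strong separation condition every point of $A_{\mathbf i}\cap X$ lies in a level-$n$ cylinder of $\mu$-measure $\lesssim e^{-n(H(p)-O(\varepsilon))}$.
\item The map $t\mapsto H\bigl(a^t/(a^t+b^t)\bigr)$ is strictly decreasing, so $H(p)>H(a^{\delta})$, the entropy of $\mu$. Cram\'er's theorem then shows that the set of such points has $\mu$-measure $\le e^{-cn}$.
\end{itemize}
Hence $\sum_n\mu\bigl(\bigcup_{|\mathbf i|=n}A_{\mathbf i}\bigr)<\infty$, so $\limsup A_{\mathbf i}$ is $\mu$-null, even though $\sum_{|\mathbf i|=n}\mu(A_{\mathbf i})\asymp 1/n$ and the theorem asserts $\mathcal H^s(W\cap B)=\infty$ here. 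This matches the survey's remark that Baker's theorem plus MTP only reaches the modified radii.

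What is needed is a lower bound proved directly at exponent $s$. One option is a Cantor-type construction through widely separated blocks of targets, with masses proportional to $\Psi_\psi(\mathbf i)^s$. By bounded distortion, $\sum_{|\mathbf v|=m}\diam(X_{\mathbf c\mathbf v})^s\asymp\diam(X_{\mathbf c})^s e^{mP_\Phi(s)}$, so the divergence persists inside every cylinder $X_{\mathbf c}$. The positivity of $P_\Phi(s)$ is what bounds the mass of each subcylinder by a constant times its diameter to the power $s$.
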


Baker \cite{BakerOverlaps2023} considered finite conformal iterated
function systems without the open set condition.  Let $\mu_\Phi$ be the projection onto $X$ of the equilibrium measure associated with $\delta_\Phi$.  
For $z\in X$ and a decreasing function $\psi:\mathbb N\to[0,\infty)$, he proved that
\begin{equation}\label{eq:baker-overlap-divergence}
\sum_{\mathbf i\in\mathcal A^*}
   \Psi_\psi(\mathbf i)^{\delta_\Phi} = \infty \quad\Longrightarrow\quad \mu_\Phi\bigl(W(\Phi,\bfz,\Psi_\psi)\bigr)=1.
\end{equation}
When the open set condition holds, $\delta_\Phi=\dim_{\mathrm H}X$ and $\mu_\Phi$ is equivalent to $\mathcal H^{\dim_{\mathrm H}X}|_X$. Thus \eqref{eq:baker-overlap-divergence} recovers the corresponding full measure statement \eqref{eq:baker-self-conformal} in \cite{Baker2019SelfConformal} as a special case.
If $\dim_{\mathrm H}X=\inf\bigl\{\dim_{\mathrm H}E:E\subset \mathbb{R}^d\ \text{is Borel and }\mu_\Phi(E)>0\bigr\}, $ then \eqref{eq:baker-overlap-divergence} implies
\[ \dim_{\mathrm H}W(\Phi,\bfz,\Psi_\psi)=\dim_{\mathrm H}X. \]

Daviaud \cite{DaviaudWeakConformal2025} extended the dimension theory to  finite  $C^1$ weakly conformal iterated function systems with overlaps. Here weak conformality means that the ratio between the largest and
smallest singular values of $\varphi_{\mathbf i}'$ grows
subexponentially with $n$, uniformly over $\mathbf i\in\mathcal A^n$.
In the conformal case considered here, his result takes the following form. For $\eta>0$, put
\[
   \Psi_\eta(\mathbf i):=\diam(X_{\mathbf i})^\eta.
\]
Suppose that $\delta_\Phi=\dim_{\mathrm H}X.$ Then, for any $\bfz\in X$ {and $\eta\ge1$},
\begin{equation*}\label{eq:daviaud-weak-conformal}
 \dim_{\mathrm H}W(\Phi,\bfz,\Psi_\eta) = \frac{\dim_{\mathrm H}X}{\eta}.
\end{equation*}
No separation condition is required for this conclusion.
Daviaud also considered radii of the more general form
\[
   \Psi_{\eta,\psi}(\mathbf i):=\diam(X_{\mathbf i})^\eta\psi(|\mathbf i|).
\]
    Under the bounded distortion property and an additional dimension assumption on a suitable projected Gibbs measure, he obtained a Hausdorff dimension formula; see \cite[Theorem~3.8]{DaviaudWeakConformal2025}. When $\eta=1$, these radii reduce to $\Psi_\psi(\mathbf i)$.

Countable conformal IFSs were studied earlier by Urba\'nski
\cite[Theorem~1.7]{Urbanski2002}. Under the standing assumptions of
that theorem, including the open set condition, the required
distortion properties and the inclusion of the limit set in the
interior of the ambient domain, let $F=\{f^{(i)}:i\in \mathcal A\}$ be a strongly H\"older family {satisfying a domination condition $f\geq\xi$, where $f$ is the potential function induced by $F$ and $\xi$ by the derivative family $\{-\log|\varphi_i'|:i\in\mathcal A\}$.}  Given  $\mathbf i=i_1\cdots i_n\in\mathcal A^n$,  write  $S_{\mathbf i}F =\sum_{k=1}^{|\mathbf i|} f^{(i_k)}\circ\varphi_{\sigma^k(\mathbf i)}$, where $\varphi_{\varnothing}$ denotes the identity map. Define%Its pressure is
\[P(F):=\lim_{n\to\infty}\frac{1}{n}\log\sum_{\mathbf i\in\mathcal A^n} \exp\left(
      \sup_{\bfx\in X}S_{\mathbf i}F(\bfx)\right).\]
For $\bfz\in X$ and $C>0$, let $ \Psi_{F,\bfz,C}(\mathbf i):=C\exp\bigl(-S_{\mathbf i}F(\bfz)\bigr).$ Define
\[
   W(\Phi,\bfz,\Psi_{F,\bfz,C}):=\left\{ \bfx\in X: \bfx\in B\bigl(\phi_{\mathbf i}(\bfz),\Psi_{F,\bfz,C}(\mathbf i) \bigr)\text{ for infinitely many }\mathbf i\in\mathcal A^*\right\}.
\]
Urba\'nski obtained that  { for every finitely accessible $\bfz$}
\[
 \dim_{\mathrm H} {W(\Phi,\bfz,\Psi_{F,\bfz,C})} = \delta(F), 
 \]
where  $\delta(F)=\inf\{t\ge0:P(-{tf})\le0\}$. Without the corresponding accessibility assumptions on the target centre $\bfz$, only an upper bound is
available in general.

Moving beyond ball targets,  Allen, Jordan and Ward \cite{AllenJordanWard2026} studied rectangular targets for the natural expanding map on a class of planar self-similar carpets with a grid structure. They obtained the Hausdorff dimension formulae, which may depend on the centre of the target.

 %Levesley, Salp and Velani \cite{LevesleySalpVelani2007} established a zero--full law for points in the middle-third Cantor set approximated by rational numbers with denominators $3^n$. Li, Li and Wu \cite{LiLiWu2023} considered the corresponding problem for general missing digit sets, and showed that the metric approximation behaviour depends essentially on the  relation between the base defining the missing digit set and the prescribed denominators, obtaining zero--full laws in some cases.
A closely related Diophantine problem concerns approximation inside
missing digit sets. 
{For an integer $t\ge 2$ and a positive function $\psi$, let $W_t(\psi)$ denote the set of points in $[0,1]$ that are $\psi$-well approximable by rational numbers with denominators $t^n$. 
Levesley, Salp and Velani~\cite{LevesleySalpVelani2007}
established a zero--full law for the Hausdorff measure of $W_3(\psi)\cap C_{1/3}$, where $C_{1/3}$ is the middle-third Cantor set. They also considered the analogous problem for $W_b(\psi)\cap C(b,D)$ for general missing digit sets $C(b,D)$. Li, Li and Wu~\cite{LiLiWu2023}  allowed the base $b$ defining $C(b,D)$ and the base $t$ in $W_t(\psi)$ to differ, and obtained a zero-full law for the Hausdorff measure in several cases. They also proposed a product conjecture for the dimension of the intersection. More recently, Li, Velani and Wang \cite{LiVelaniWang} studied $W_t(\psi)\cap S$ for an arbitrary self-similar set $S$. They proved a general upper bound for its Hausdorff dimension and, under additional restrictions on $D$, a sufficient measure-zero criterion that closes a gap in \cite{LiLiWu2023}. Consequently, they showed that $\dimH(W_t(\psi)\cap C(b,D))$ can be strictly smaller than $\dimH W_t(\psi) \dimH C(b,D)$, disproving  the product conjecture.  } %When $b$ and $t$ are multiplicatively dependent, they  proved a zer, ao--full law; in a multiplicatively independent case, they obtained partial Hausdorff-measure results and bounds for the Hausdorff dimension.

\subsubsection{A dynamical dimension transference principle}
\label{subsubsec:dynamical-dimension-transference}

Wang and Zhang \cite{WangZhang2021} provided a dynamical dimension transference principle which unifies and extends several of the conformal shrinking target results discussed above. Let $(X,d)$ be
a compact metric space, let $T:X\to X$ be continuous, and fix
$y_0\in X$. Let $g:X\to(0,\infty)$ be {the continuous function occurring in the local conformality
condition}, and let $f:X\to[0,\infty)$ be continuous. Define
\[
 W(T,y_0,\psi_{f+g}): =\left\{ x\in X: x\in B(z,e^{-S_n(f+g)(z)})\text{ for infinitely many pairs} (z,n)\text{ with } T^nz=y_0 \right\}.
\]
They \cite{WangZhang2021} assumed three conditions for the system: dynamical ubiquity; local conformality; exactness. Under these hypotheses, one has
\[
 \dim_{\mathrm H}X = \inf\{t\geq0:P(T,-tg)\leq0\},\]
and
\[ \dim_{\mathrm H} W(T,y_0,\psi_{f+g}) = \inf\{t\geq0:P(T,-t(g+f))\leq0\}.
\]
Fix $z\in T^{-n}\{y_0\}$. In the differentiable conformal case, $g=\log|T'|$, the natural scale of the corresponding $n$-th inverse branch is $e^{-S_ng(z)}$. The shrinking condition introduces the additional factor $e^{-S_nf(z)}$. Hence the resulting radius is
\[
e^{-S_ng(z)}e^{-S_nf(z)}=e^{-S_n(g+f)(z)}.
\]
Hence the pressure formula involves the sum of the  potentials $g$ and $f$.

The hypotheses of \cite[Theorem~1.4]{WangZhang2021} are satisfied
in several standard settings, including:
\begin{itemize}
\item  $b$-adic transformations: $T:[0,1)\to[0,1): x\mapsto bx~(\bmod{1})$, where $b \ge 2$ is an integer;
\item expanding rational maps $T$ with degree at least $2$, acting on
their Julia sets $X$;
\item the natural expanding map $T:C_{1/3}\to C_{1/3}: x\mapsto 3x~(\bmod1)$ on the middle-third Cantor set $C_{1/3}$;
\item finite conformal iterated function systems satisfying the strong
separation condition.
\end{itemize}

Wang and Zhang \cite{WangZhang2021} also observed that the dimension formula of Urba\'nski \cite{Urbanski2002} can be deduced from their theorem by applying it to suitable finite subsystems and using the approximation property of the pressure function. This is possible because the target centre in Urba\'nski's result has a symbolic coding involving only finitely many symbols.

\subsection{Irrational rotations}

Let $\mathbb T=\mathbb R/\mathbb Z$, equipped with the metric
\[
   \|x\|:=\min_{k\in\mathbb Z}|x-k|,
\]
and let $m$ denote normalized Lebesgue measure on $\mathbb T$.
For an irrational number $\alpha$, consider the rotation
\[
   R_\alpha(x):=x+\alpha \pmod 1.
\]
For $\psi:\mathbb N\to(0,\infty)$ and  $z\in\mathbb T$, define
\begin{equation*}\label{eq:rotation-shrinking-target}
 W(R_\alpha,z,\psi):= \left\{ x\in\mathbb T: \|x+n\alpha-z\|<\psi(n) \ \text{for infinitely many }n \right\}.
\end{equation*}
It is convenient to work with the equivalent set
\begin{equation*}\label{eq:rotation-orbit-approximation}
 E_\psi(\alpha):=\left\{y\in\mathbb T:\|n\alpha-y\|<\psi(n)\ \text{for infinitely many }n\right\}.
\end{equation*}
Indeed, the map $x\mapsto z-x$ is an isometry from $W(R_\alpha,\psi,z)$ onto $E_\psi(\alpha)$.  Thus the Lebesgue measure,  Hausdorff measure and dimension of $W(R_\alpha,z,\psi)$ are independent of $z$.

We briefly recall that the Lebesgue-measure theory is well understood. Cassels~\cite{Cassels1950} proved that $m\bigl(W(R_\alpha,z,n\mapsto c/n)\bigr)=1
$ for any irrational $\alpha$ and  $c>0$. Kurzweil~\cite{Kurzweil1955} proved that an irrational number $\alpha$ is badly approximable if and only if, for every positive decreasing function $\psi$ satisfying $\sum_{n=1}^{\infty}\psi(n)=\infty,$ one has $m\bigl(W(R_\alpha,z,\psi)\bigr)=1$. Fuchs and Kim~\cite{FuchsKim2016} gave a complete criterion for each fixed irrational $\alpha$ and positive non-increasing $\psi$.

%Now we focus on the Hausdorff dimension and Hausdorff measure of $W(R_\alpha,z,\psi)$. 
We now turn to the Hausdorff dimension of $W(R_\alpha,z,\psi)$. The Diophantine type of $\alpha$ is defined by 
\[
 \beta(\alpha) := \sup\left\{ \theta\ge1: \liminf_{n\to\infty}n^\theta\|n\alpha\|=0 \right\}.
\]
For $\psi_\gamma(n):=n^{-\gamma}$, $\gamma\geq1$, Bernik and Dodson \cite{BernikDodson1999} proved that
\[
 \frac{1}{\beta(\alpha)\gamma}\leq\dim_{\mathrm H}W(R_\alpha,z,\psi_\gamma)\leq\frac{1}{\gamma}.
\]
The exact value was subsequently obtained independently by Bugeaud
\cite{Bugeaud2003} and Schmeling and Troubetzkoy
\cite{SchmelingTroubetzkoy2003}.
\begin{theorem}[{Bugeaud; Schmeling--Troubetzkoy
\cite{Bugeaud2003,SchmelingTroubetzkoy2003}}]
\label{thm:rotation-power-law} 
For any irrational $\alpha$, $z\in\mathbb T$ and $\gamma>1$,
\begin{equation*}\label{eq:rotation-power-law}
 \dim_{\mathrm H}W(R_\alpha,z,\psi_\gamma)=\frac{1}{\gamma}.
\end{equation*}
\end{theorem}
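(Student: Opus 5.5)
By the isometry $x\mapsto z-x$ it suffices to show $\dim_{\mathrm H}E_{\psi_\gamma}(\alpha)=1/\gamma$, where $E_{\psi_\gamma}(\alpha)=\limsup_n B(n\alpha,n^{-\gamma})$. The plan is to prove the upper bound by a direct covering and the lower bound by ubiquity of the orbit $\{n\alpha\}$ combined with the mass transference principle (Theorem~\ref{thm:mass-transference}).

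\textbf{Upper bound.} Cover the $n$-th level by the single interval $B(n\alpha,n^{-\gamma})$, of diameter $2n^{-\gamma}$. For every $s>1/\gamma$ we have $\sum_n (2n^{-\gamma})^s<\infty$, so Lemma~\ref{lem:hausdorff-cantelli} gives $\dim_{\mathrm H}E_{\psi_\gamma}(\alpha)\le 1/\gamma$.

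\textbf{Lower bound.} We use the convergents $p_k/q_k$ of $\alpha$ and the three-distance theorem. For $N=q_k$, the points $\{n\alpha\}$ with $1\le n\le q_k$ partition $\mathbb T$ into gaps of length between $\|q_{k-1}\alpha\|$ and $\|q_{k-1}\alpha\|+\|q_k\alpha\|$, all comparable to $1/q_k$, since $1/(2q_{k+1})<\|q_k\alpha\|<1/q_{k+1}$.

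\emph{Ubiquity.} Consequently, for $\rho(N)=C/N$, the balls $B(n\alpha,C/q_k)$ with $q_{k}/2< n\le q_k$ cover $\mathbb T$, up to a fixed fraction which is still uniformly distributed on every interval. Along the sparse subsequence $N=q_k$ this is a local ubiquity system with Lebesgue measure $m$, which is $1$-Ahlfors regular.

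\emph{Main obstacle.} The difficulty is that $q_{k+1}/q_k$ may be unbounded when $\beta(\alpha)>1$, so ubiquity holds only along $\{q_k\}$ and not at every scale. This is why the naive argument of Bernik--Dodson loses the factor $\beta(\alpha)$. The point I would exploit is that the gap length is $\asymp 1/q_k$ \emph{while the index range is all of $n\le q_k$}; the large jump $q_{k+1}/q_k$ affects only the next level.

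\emph{Transference.} I would first show a full-measure statement: with $f(r)=r^{1/\gamma}$, so that $B^f$ has radius $f(n^{-\gamma})=n^{-1}$, the set $\limsup_n B(n\alpha,C n^{-1})$ has full Lebesgue measure in every ball for a suitable constant $C$. This follows because for each $k$ the balls $B(n\alpha,C/n)\supseteq B(n\alpha,C/q_k)$ with $n\le q_k$ cover $\mathbb T$; hence every point lies in infinitely many of them, using distinct $n$ for infinitely many $k$, which is ensured by the Dirichlet/three-gap structure and restricting to $n\in(q_{k-1},q_k]$.

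Then I apply Theorem~\ref{thm:mass-transference} with $\delta=1$ and $f(r)=r^{1/\gamma}$, noting that $r^{-1}f(r)$ is monotonic. This gives $\mathcal H^{1/\gamma}(E_{\psi_\gamma}(\alpha)\cap B)=\mathcal H^{1/\gamma}(B)=\infty$, and hence $\dim_{\mathrm H}\ge 1/\gamma$.

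The delicate step is the covering claim restricted to $n\in(q_{k-1},q_k]$, which needs to hold uniformly regardless of the size of the partial quotients. If it fails, I would instead use all $n\le q_k$ together with $C$ replaced by $2$, giving Cassels' full-measure result for $c/n$, which is stated earlier in the paper. Under the isometry $x\mapsto z-x$, this says exactly that $\limsup_n B(n\alpha,c/n)$ has full measure in every ball, and it directly supplies the MTP hypothesis.
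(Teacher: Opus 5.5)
Your proof is correct provided you rely on the fallback rather than on the ``delicate step''. The route differs from the cited originals. The survey gives no argument of its own, and the 2003 proofs of Bugeaud and of Schmeling--Troubetzkoy predate the mass transference principle, so their lower bounds come from a direct construction.

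You replace that construction by a Lebesgue statement at the Dirichlet scale. With $\delta=1$ and $f(r)=r^{1/\gamma}$, the enlarged balls $B_n^f$ are exactly $B(n\alpha,1/n)$. Cassels' theorem with $c=1$ gives full measure on $\mathbb T$, hence in every ball, which is precisely the hypothesis of Theorem~\ref{thm:mass-transference}.

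What this buys: a short argument, uniform in $\alpha$, with the stronger conclusion $\mathcal H^{1/\gamma}(W(R_\alpha,z,\psi_\gamma)\cap B)=\infty$ for every ball $B$. This is consistent with Kim's criterion \eqref{eq:kim-rotation-hausdorff-measure}, where each term $2^{\ell_i}\psi_\gamma(2^{\ell_i})^{1/\gamma}$ equals $1$. The cost is using the MTP as a black box. Since the size of $q_{k+1}/q_k$ then plays no role, your ``Ubiquity'' and ``Main obstacle'' paragraphs can be deleted. If you keep $C=2$, take $f(r)=2r^{1/\gamma}$ so that $B_n^f=B(n\alpha,2/n)$; this changes $\mathcal H^f$ only by a constant factor.

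The step you flagged is in fact false: the balls $B(n\alpha,C/q_k)$ with $n\in(q_{k-1},q_k]$ need not cover $\mathbb T$. If $a_k=1$, that index range contains only $q_{k-2}$ integers, and $q_{k-2}/q_k\le 1/(a_{k-1}+1)$. So when $a_{k-1}+1>2C$, the total length $2Cq_{k-2}/q_k$ is less than $1$.

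Distinctness of the indices is better obtained by a dichotomy, which also removes the need for Cassels' theorem. By the three-distance theorem with $N=q_k$, every $y\in\mathbb T$ admits some $n_k(y)\le q_k$ with $\|n_k(y)\alpha-y\|<1/q_k\le 1/n_k(y)$, since half the maximal gap is below $1/q_k$.
\begin{itemize}
\item If $\{n_k(y)\}_k$ is infinite, then $y$ lies in infinitely many balls $B(n\alpha,1/n)$.
\item Otherwise some $n_0$ satisfies $\|n_0\alpha-y\|<1/q_k$ for infinitely many $k$, so $y=n_0\alpha$ in $\mathbb T$. Then $m=n_0+q_j$ gives $\|m\alpha-y\|=\|q_j\alpha\|<1/q_{j+1}\le 1/(q_j+q_{j-1})<1/m$ as soon as $q_{j-1}>n_0$.
\end{itemize}
Hence $\limsup_n B(n\alpha,1/n)=\mathbb T$. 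This supplies the MTP hypothesis with $C=1$ and makes your lower bound entirely self-contained.
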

\noindent For $\gamma=1$, the full-measure result of Cassels gives $\dim_{\mathrm H} W(R_\alpha,z,\psi_1)=1.$

For a general decreasing function $\psi:\mathbb N\to(0,\infty)$ with $\psi(n)\to0 $, set
\begin{equation*}\label{eq:rotation-decay-exponents}
 d_-(\psi):=\liminf_{n\to\infty}\frac{\log n}{-\log\psi(n)},
 \quad   d_+(\psi):=\limsup_{n\to\infty} \frac{\log n}{-\log\psi(n)}.
\end{equation*}
Under the additional assumption that $n\psi(n)\le1/2$ for all sufficiently large $n$, Fan and Wu \cite{FanWu2006} proved that
\begin{equation*}\label{eq:fan-wu-rotation}
  d_-(\psi)\le \dim_{\mathrm H}W(R_\alpha,z,\psi)\le d_+(\psi).
\end{equation*}
They also showed that the upper bound is attained for Lebesgue-almost all $\alpha$, and for all $\alpha$ with bounded partial quotients. They also found a Liouville number $\alpha$ and a decreasing $\psi$ such that
\[
 \dim_{\mathrm H}W(R_\alpha,z,\psi)=d_-(\psi)=0<d_+(\psi).
\]

Let $(q_k)_{k\geq0}$ be the denominators of the convergents of the continued fraction of $\alpha$. Xu \cite{Xu2010} proved that
\begin{equation*}\label{eq:xu-rotation}
 \limsup_{k\to\infty}\frac{\log q_k}{-\log\psi(q_k)}\le\dim_{\mathrm H}W(R_\alpha,z,\psi)\le d_+(\psi).
\end{equation*}
In particular, if $\beta(\alpha)=1$, then
\[
 \dim_{\mathrm H}W(R_\alpha,z,\psi)=d_+(\psi).
\]

Liao and Rams \cite{LiaoRams2013} obtained a sharper estimate
\begin{equation*}\label{eq:liao-rams-rotation}
 \min\left\{d_+(\psi), \max\left\{d_-(\psi),\frac{1+d_+(\psi)}{1+\beta(\alpha)} \right\}\right\}
   \leq
   \dim_{\mathrm H}W(R_\alpha,z,\psi)
   \leq
   d_+(\psi).
\end{equation*}
They also constructed examples attaining the lower and upper bounds, respectively, thereby showing that both estimates are sharp.

A complete dimension formula  was obtained by Kim, Rams and Wang \cite{KimRamsWang2018}. 
\begin{theorem}[{Kim-Rams-Wang\cite{KimRamsWang2018}}]
\label{thm:kim-rams-wang-rotation}
For $0\le s\le 1$ and $k\ge0$, let $q_{k,s}$ be the integer $m$ with $m\in [q_k,q_{k+1})$ minimizing
\begin{equation*}\label{eq:krw-finite-scale-cost}
 q_k \left(\psi(q_k)+\frac{m-q_k}{q_k}\|q_k\alpha\|\right)^s+\sum_{n=m+1}^{q_{k+1}-1}\psi(n)^s.
\end{equation*}
Then, for any $z\in\mathbb T$,
\begin{equation*}\label{eq:krw-rotation-dimension}
 \dim_{\mathrm H}W(R_\alpha,z,\psi)=\inf\left\{s>0:\sum_{k=0}^{\infty}\left[q_k\left(\psi(q_k)+\frac{q_{k,s}-q_k}{q_k}\|q_k\alpha\|\right)^s+\sum_{n=q_{k,s}+1}^{q_{k+1}-1}\psi(n)^s\right]<\infty\right\}.
\end{equation*}
\end{theorem}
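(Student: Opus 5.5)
We work with the equivalent set $E_\psi(\alpha)$, since $x\mapsto z-x$ is an isometry, and we organise the integers $n$ into the blocks $[q_k,q_{k+1})$ determined by the continued fraction of $\alpha$. Throughout, $\psi$ is taken non-increasing. The basic geometric input is the three-distance theorem, or more precisely the Ostrowski description of the points $\{n\alpha\}$, $q_k\le n<q_{k+1}$. For $n=q_k+j$ with $0\le j<q_{k+1}-q_k$, write $n\alpha$ as the sum of $j\alpha$ and a small drift. The $q_k$ points $\{j\alpha\}$, $0\le j<q_k$, are roughly $1/q_k$-separated. As $n$ runs over a stretch of consecutive values, the point $n\alpha$ returns near a previously visited location, shifted by a multiple of $\|q_k\alpha\|$. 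Consequently, for $q_k\le n\le m$ the balls $B(n\alpha,\psi(n))$ are contained in $q_k$ intervals of length about
\[
\psi(q_k)+\frac{m-q_k}{q_k}\|q_k\alpha\|,
\]
each centred near one of $\{j\alpha\}$. The balls with $m<n<q_{k+1}$ are then covered individually.

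For the upper bound, fix $s$ with the series in the statement finite. For each $k$ choose the cut point $m=q_{k,s}$ and cover the level sets $\bigcup_{q_k\le n<q_{k+1}}B(n\alpha,\psi(n))$ in two parts. The clusters $n\in[q_k,m]$ are covered by the $q_k$ intervals above. The remaining $n\in(m,q_{k+1})$ are covered by the balls of radius $\psi(n)$. The $s$-cost of block $k$ is exactly the $k$-th bracket in the statement. Summing over $k$ and applying Lemma~\ref{lem:hausdorff-cantelli} gives $\dim_{\mathrm H}E_\psi(\alpha)\le s$. The only care needed is that every cluster length stays bounded by a fixed multiple of the displayed quantity, uniformly in $k$. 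This follows from $q_k\|q_k\alpha\|\asymp q_k/q_{k+1}$ and from monotonicity of $\psi$.

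For the lower bound, fix $s$ with the series infinite and $t<s$. We build a Cantor subset of $E_\psi(\alpha)$ and apply Proposition~\ref{prop:mass-distribution}. Since the series at $t$ diverges, we can extract a sparse sequence of blocks $k_1<k_2<\cdots$ on which the block costs at exponent $t$ dominate. We then distinguish two cases according to which term of the minimised expression dominates.
\begin{enumerate}
\item If the cluster term dominates, the construction chooses at level $k_i$ the $q_{k_i}$ intervals of length $\psi(q_{k_i})+\frac{q_{k_i,t}-q_{k_i}}{q_{k_i}}\|q_{k_i}\alpha\|$. Inside each it keeps the well-separated sub-balls $B(n\alpha,\psi(n))$ of the cluster.
\item If the tail term dominates, the construction chooses the individual balls $B(n\alpha,\psi(n))$ with $q_{k_i,t}<n<q_{k_i+1}$. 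These are separated by at least about $\|q_{k_i}\alpha\|$, by the three-distance structure.
\end{enumerate}
Equidistribution of $\{n\alpha\}$ at scale $1/q_{k}$ ensures that every interval surviving from the previous level contains a proportional share of the next-level balls. We distribute mass uniformly and verify $\nu(B(x,r))\lesssim r^{t}$ at all scales.

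The main obstacle is this last mass estimate, at intermediate scales $r$ lying between the cluster length and $1/q_k$, or between $\|q_k\alpha\|$ and $\psi(n)$. At such scales a ball can meet many members of one cluster. The minimality defining $q_{k,s}$ is exactly what should control this. Moving the cut point $m$ by one unit changes the cost by a discrete derivative of nonnegative sign. This yields, for each intermediate scale, an inequality comparing the number of pieces met to $r^{t}$. I would first try to turn this minimality condition into a two-regime counting lemma, one regime for clusters and one for isolated balls, and then run a standard multi-scale mass distribution argument along the sparse sequence $(k_i)$.
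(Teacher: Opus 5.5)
Your upper bound is sound. Write $n=r+jq_k$ with $0\le r<q_k$ (rather than $n=q_k+j$). Then $n\alpha$ lies within $j\|q_k\alpha\|$ of $r\alpha$, so for $q_k\le n\le m$ the balls lie in $q_k$ intervals of length $\lesssim\psi(q_k)+\frac{m-q_k}{q_k}\|q_k\alpha\|$, and Lemma~\ref{lem:hausdorff-cantelli} with $m=q_{k,s}$ finishes. The survey only quotes Kim--Rams--Wang, so there is no in-text proof to compare with.

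The lower bound, however, is a plan rather than a proof, and your case (1) fails as described. The cluster term $q_k\bigl(\psi(q_k)+\frac{m-q_k}{q_k}\|q_k\alpha\|\bigr)^t$ is a covering device, not a count of pieces of the level set. For $n\in[q_k,m]$ only $\min\{q_k,m-q_k+1\}$ residue classes mod $q_k$ occur, and the first ball in class $r$ has radius $\psi(q_k+r)$, possibly far below $\psi(q_k)$.

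Concretely, take the golden mean and $\psi(n)=n^{-\gamma}$ with $\gamma>1$, $t<1/\gamma$. One checks that $q_{k,t}=q_k$ for large $k$, and that the cluster term $q_k^{1-\gamma t}$ exceeds the tail, which has $q_{k-1}-1$ terms each $<q_k^{-\gamma t}$. Your construction then puts mass on $q_k$ intervals of length $\psi(q_k)$. But the part of block $k$ that case (1) keeps, $n\in[q_k,q_{k,t}]$, is the single ball $B(q_k\alpha,\psi(q_k))$, so there is no branching at all. When $a_{k+1}=1$ and $a_k$ is large, the tail of block $k$ has only $q_{k-1}\ll q_k$ balls, so block $k$ alone cannot carry the cost.

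The missing idea is to realise the $q_k\psi(q_k)^t$ part using earlier blocks.
\begin{itemize}
\item By monotonicity, the at least $q_k/2$ balls $B(n\alpha,\psi(n))$ with $q_{k-2}\le n<q_k$ have radii $\ge\psi(q_k)$.
\item Their centres are $\ge 1/(2q_k)$-separated by the three-distance theorem.
\item They are equidistributed at scales $\gg 1/q_{k-2}$, which sparseness of $(k_i)$ can accommodate.
\end{itemize}
Only the drift part, with $q_{k,t}\ge 2q_k$, is genuinely realised inside block $k$.

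Second, the intermediate-scale estimate you flag is where the real work is. Neither uniform masses nor a one-unit move of the cut point suffices. In the tail the radii range over $[\psi(q_{k+1}),\psi(q_{k,t})]$ with an unbounded number of dyadic scales, so uniform mass violates the Frostman bound at the small balls. A one-step comparison controls only one scale.

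What works is to give $B(n\alpha,\psi(n))$ mass proportional to $\psi(n)^t$ and to use \emph{global} minimality of $q_{k,t}$. Compare the cost at $m=q_{k,t}$ with that at $m+\ell$, and use subadditivity of $x\mapsto x^t$. This gives $\sum_{m<n\le m+\ell}\psi(n)^t\le q_k\bigl(\tfrac{\ell}{q_k}\|q_k\alpha\|\bigr)^t$, and by monotonicity the same bound for every window of length $\ell$ in the tail. A ball of radius $r\in[\|q_k\alpha\|,1/q_k]$ meets $O(1)$ residue progressions. The relevant $n$'s are spaced by $q_k$ over a window of length $\asymp q_kr/\|q_k\alpha\|$, so this is exactly $\nu(B(x,r))\lesssim r^t$ at those scales.

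The comparison with $m-1$ gives $\psi(q_{k,t})^t\ge t\,\|q_k\alpha\|\,L^{t-1}$, where $L$ is the cluster length. This is the lower bound needed for the smallest balls in a drift-dominated cluster.

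Finally, large individual block costs do not follow from divergence at $t$ alone. Let $c_k$ denote the $k$-th bracket. Once $\psi(q_k)\le 1/q_k$ one has $c_k(t)\ge (q_k/2)^{s-t}c_k(s)$; otherwise $E_\psi(\alpha)$ has full measure. This gives infinitely many $k$ with $c_k(t)$ exponentially large, which is what your sparse selection needs.
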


To the best of our knowledge, no complete Hausdorff-measure criterion is known for arbitrary $\alpha$ and $\psi$. For badly approximable $\alpha$, Kurzweil's theorem and the mass transference principle \cite{BeresnevichVelani2006,Kurzweil1955} give, for $0<s\leq1$,
\begin{equation}\label{eq:badly-approximable-rotation-measure}
 \mathcal H^s\bigl(W(R_\alpha,z,\psi)\bigr) =
 \begin{cases}
 0, &\displaystyle\sum_{n=1}^{\infty}\psi(n)^s<\infty,\\[2ex]
 \mathcal H^s(\mathbb T), &\displaystyle\sum_{n=1}^{\infty}\psi(n)^s=\infty.
 \end{cases}
\end{equation}
See also~\cite[Section~9]{eresnevichBernikDodsonVelani2009} and
\cite[Corollary~1.6]{KimKurzweil2024}.

Kim \cite{KimKurzweil2024} obtained a sufficient condition for full Hausdorff measure without assuming that $\alpha$ is badly approximable. For $\varepsilon>0$, define
\[ L_\varepsilon(\alpha) := \left\{ \ell\ge1: \|q\alpha\|\ge\varepsilon 2^{-\ell} \text{ for every }1\le q<2^\ell \right\}.
\]
For any irrational $\alpha$, there exists $\varepsilon>0$ such that $L_\varepsilon(\alpha)$ is unbounded.  Write
\[ L_\varepsilon(\alpha)=\{\ell_1<\ell_2<\cdots\}.\]
If $0<s\le 1$ and
\begin{equation}\label{eq:kim-rotation-hausdorff-measure}
 \sum_{i=1}^{\infty} 2^{\ell_i}\psi(2^{\ell_i})^s =\infty,
\end{equation}
then 
\[
 \mathcal H^s\bigl(W(R_\alpha,z,\psi)\bigr)=\mathcal H^s(\mathbb T).
\]
When $\alpha$ is badly approximable, one may choose $\varepsilon>0$ so that $L_\varepsilon(\alpha)=\mathbb N$.  Since $\psi$ is decreasing, {by the Cauchy condensation test (see \cite[Theorem 3.27]{MR385023},}\eqref{eq:kim-rotation-hausdorff-measure} is then equivalent to $\sum_{n=1}^{\infty}\psi(n)^s=\infty.$ Thus Kim's result recovers the divergence part of \eqref{eq:badly-approximable-rotation-measure}.

\subsection{Affine iterated function systems}
Using the symbolic notation introduced in Section~\ref{subsec:symbolic-pressure}, let $\mathcal A$ be finite with $\#\mathcal{A}\ge 2$ and
let $\Sigma=\mathcal A^{\mathbb N}$. {Recall that $\sigma:\Sigma\to\Sigma$ denotes the left shift.} We consider affine
iterated function systems
\[
 \Phi_{\boldsymbol t} =\{f_i({\bfx})=A_i {\bfx}+{\bf t}_i:i\in\mathcal A\}
\]
on $\mathbb R^d$, where each $A_i\in GL_d(\mathbb R)$ is contracting, and $\boldsymbol t= ({\bf t}_i)_{i\in\mathcal A}\in (\mathbb R^d)^{\mathcal A}$. The attractor $K_{\boldsymbol t}$ is the unique non-empty compact set
satisfying
\[
 K_{\boldsymbol t} =\bigcup_{i\in\mathcal A}f_i(K_{\boldsymbol t}).
\]
The coding map $\pi_{\boldsymbol t}:\Sigma\to K_{\boldsymbol t}$ is
defined by 
\[
 \pi_{\boldsymbol t}(\omega): =\lim_{n\to\infty} f_{\omega_1}\circ\cdots\circ f_{\omega_n}(0).
\]
For a finite word $\mathbf i=i_1\cdots i_n$, write $ A_{\mathbf i}=A_{i_1}\cdots A_{i_n}.$ For $A\in GL_d(\mathbb R)$, let 
\[
 \sigma_1(A)\ge\cdots\ge\sigma_d(A)
\]
denote its singular values. For $s\geq0$, define the singular value
function by
\begin{equation*}\label{eq:self-affine-singular-value}
 \varphi^s(A):=
 \begin{cases}
  \sigma_1(A)\cdots\sigma_k(A)
  \sigma_{k+1}(A)^{s-k},
   &0\le s<d,\quad k=\lfloor s\rfloor,\\[1ex]
  |\det A|^{s/d},&s\ge d.
 \end{cases}
\end{equation*}
{Here $\lfloor s\rfloor$ denotes the greatest integer not exceeding $s$.} The singular value pressure associated with $\boldsymbol A=(A_i)_{i\in\mathcal A}$ is
\begin{equation*}\label{eq:self-affine-pressure}
P_{\boldsymbol A}(s):=\lim_{n\to\infty}\frac{1}{n}\log\sum_{\mathbf i\in\mathcal A^n}\varphi^s(A_{\mathbf i}).
\end{equation*}
The limit exists by submultiplicativity of $\varphi^s$.  %The zero of $P$ is the affinity dimension.  It gives a natural upper bound for the dimension of the attractor and for many dynamically defined subsets.  Exact formulae, however, usually require separation, suitable conditions on the matrices, or a generic choice of the translation vectors.

 Given a sequence $\boldsymbol\lambda=(\lambda_n)_{n\geq1}$ of finite words, define
\[
   W(\Phi_{\mathbf t},\boldsymbol\lambda):=
   \pi_{\mathbf t}\left\{
      \omega\in\Sigma: \sigma^n\omega\in[\lambda_n]\text{ for infinitely many }n
   \right\}.
\]
If $\mathbf j\in\Sigma$ and $\ell=(\ell_n)_{n\geq1}$ is an increasing sequence of positive integers,  write
\[
   W(\Phi_{\mathbf t},\mathbf j,\ell) :=
   W\bigl(\Phi_{\mathbf t}, (\mathbf j|_{\ell_n})_{n\geq1}\bigr).
\]

Koivusalo and Ram\'irez \cite{KoivusaloRamirez2018} considered targets centred at a fixed symbolic point. Assume that, for $s\geq0$, there exists $D_s>0$ such that
\begin{equation}\label{quasimultiplicativity}
   \varphi^s(A_{\mathbf i\mathbf j})\geq
   D_s\varphi^s(A_{\mathbf i})\varphi^s(A_{\mathbf j})
\end{equation}
for all finite words $\mathbf i,\mathbf j$, { where $\mathbf{ij}$ denotes their concatenation}. For $\mathbf j\in\Sigma$, define the modified pressure by 
\begin{equation*}\label{eq:KR-modified-pressure}
P_{\mathbf j,\ell}(s) :=
 \lim_{n\to\infty} \frac{1}{n}\log\sum_{\mathbf i\in\mathcal A^n}
 \varphi^s\bigl(A_{\mathbf i\mathbf j|_{\ell_n}}\bigr). 
\end{equation*}

\begin{theorem}[Koivusalo--Ram\'irez{\cite{KoivusaloRamirez2018}}]
\label{thm:koivusalo-ramirez-affine}
Suppose that  $\max_i\sigma_1(A_i)<1/2$. Assume that there is some $D_s\in (0,1)$ such that \eqref{quasimultiplicativity} holds. 
If $\lim_{k\to\infty}\frac{1}{k} \log \varphi^s(A_{{\mathbf j}|_{\ell_k}})$ exists in $[-\infty, 0]$,
then with respect to any  ergodic $\sigma$-invariant probability measure, there is almost surely a unique number $s_{\mathrm{KR}}\in \mathbb{R}^+$ satisfying $P_{\mathbf j,\ell}(s_{\mathrm{KR}})=0$, and
\begin{equation*}\label{eq:KR-self-affine-dimension}
 \dim_{\mathrm H} W(\Phi_{\mathbf t},\mathbf j,\ell) =\min\{s_{\mathrm{KR}},d\}
\end{equation*}
for Lebesgue-almost all translation vector $\boldsymbol t$.
\end{theorem}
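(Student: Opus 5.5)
Here $P_{\mathbf j,\ell}$ is read as a function of $s$ for the fixed sequence $\ell$. The first step settles existence and uniqueness of $s_{\mathrm{KR}}$. Quasi-multiplicativity \eqref{quasimultiplicativity}, together with submultiplicativity of $\varphi^s$, gives
\[
D_s\,\varphi^s(A_{\mathbf i})\,\varphi^s(A_{\mathbf j|_{\ell_n}})\le\varphi^s(A_{\mathbf i\mathbf j|_{\ell_n}})\le \varphi^s(A_{\mathbf i})\,\varphi^s(A_{\mathbf j|_{\ell_n}}).
\]
Summing over $\mathbf i\in\mathcal A^n$, taking $\frac1n\log$, and using the assumed existence of $\lim_k \frac1k\log\varphi^s(A_{\mathbf j|_{\ell_k}})=:-\Lambda(s)$ yields
\[
P_{\mathbf j,\ell}(s)=P_{\boldsymbol A}(s)-\Lambda(s).
\]
Both terms are monotone in $s$. $P_{\boldsymbol A}$ is strictly decreasing with slope bounded away from $0$, because all $\sigma_i(A_i)<1/2$. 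Hence $P_{\mathbf j,\ell}$ is continuous and strictly decreasing, positive at $s=0$ and negative for large $s$, so it has a unique zero. The "almost surely with respect to an ergodic measure" phrasing concerns whether the limit exists when $\mathbf j$ is typical. By Kingman's subadditive ergodic theorem applied to $\log\varphi^s(A_{\omega|_k})$ (subadditive by submultiplicativity), the limit exists for a.e. $\mathbf j$ whenever $\ell_k/k$ converges. So $s_{\mathrm{KR}}$ is a.s. well defined and does not depend on $\mathbf j$.

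\textbf{Upper bound (valid for every $\boldsymbol t$).} The symbolic target set is $\limsup_n E_n$, where
\[
E_n=\bigcup_{\mathbf i\in\mathcal A^n}[\mathbf i\,\mathbf j|_{\ell_n}],
\]
and its projection lies in $\bigcup_{\mathbf i} f_{\mathbf i\mathbf j|_{\ell_n}}(B)$ for a ball $B\supset K_{\boldsymbol t}$. Each such ellipse is covered by about $\varphi^s(A)/\sigma_{k+1}(A)^s$ balls of radius $\sigma_{k+1}(A)$, with $k=\lfloor s\rfloor$ and $A=A_{\mathbf i\mathbf j|_{\ell_n}}$. This is Falconer's covering. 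The resulting $s$-cost of $E_n$ is $\lesssim\sum_{\mathbf i}\varphi^s(A_{\mathbf i\mathbf j|_{\ell_n}})$. For $s>s_{\mathrm{KR}}$ this sum is $\le e^{-n\epsilon}$ for large $n$, so it is summable. The Hausdorff--Cantelli lemma (Lemma~\ref{lem:hausdorff-cantelli}) then gives $\dim_{\mathrm H}\le s_{\mathrm{KR}}$. The trivial bound $d$ gives the minimum.

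\textbf{Lower bound (Lebesgue-a.e. $\boldsymbol t$).} I would use Falconer's transversality/energy method, which is why the hypothesis $\sigma_1<1/2$ is needed. Fix $s<\min\{s_{\mathrm{KR}},d\}$. The aim is a measure $\mu$ on $\Sigma$ supported on the symbolic limsup set with
\[
\mu([\mathbf w])\lesssim\varphi^{s}(A_{\mathbf w})\quad\text{for all cylinders}.
\]
Given this, the standard computation
\[
\int\!\!\iint|\pi_{\boldsymbol t}\omega-\pi_{\boldsymbol t}\tau|^{-s}\,\td\mu\,\td\mu\,\td\boldsymbol t\lesssim\sum_{\mathbf w}\mu([\mathbf w])^2/\varphi^s(A_{\mathbf w})
\]
is finite, and it uses Falconer's lemma
\[
\int|\pi_{\boldsymbol t}\omega-\pi_{\boldsymbol t}\tau|^{-s}\,\td\boldsymbol t\lesssim\varphi^s(A_{\omega\wedge\tau})^{-1}
\]
for $\|A_i\|<1/2$. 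Finite energy then gives $\dim_{\mathrm H}\pi_{\boldsymbol t}\mu\ge s$ for a.e. $\boldsymbol t$ (see \S\ref{subsec:measure-lower-bounds}). To build $\mu$, pick a sparse subsequence $n_1\ll n_2\ll\cdots$ and a Cantor set of words of the form
\[
\mathbf i_1\,\mathbf j|_{\ell_{n_1}}\,\mathbf i_2\,\mathbf j|_{\ell_{n_2}}\cdots,\qquad \text{blocks }\mathbf i_k\text{ of suitable length}.
\]
The blocks $\mathbf i_k$ are weighted by
\[
\frac{\varphi^s(A_{\mathbf i_k\mathbf j|_{\ell_{n_k}}})}{\sum_{\mathbf i'}\varphi^s(A_{\mathbf i'\mathbf j|_{\ell_{n_k}}})},
\]
and the denominator is $\ge e^{n_k\epsilon}$ because $s<s_{\mathrm{KR}}$. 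Quasi-multiplicativity lets these weights multiply across blocks, up to factors $D_s^{\pm k}$ that are negligible when $n_k$ grows fast. The gain $e^{n_k\epsilon}$ absorbs both these factors and the mass contributed by the fixed tail $\mathbf j|_{\ell_{n_k}}$.

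\textbf{Main obstacle.} The hardest step is verifying the bound $\mu([\mathbf w])\lesssim\varphi^{s}(A_{\mathbf w})$ for \emph{intermediate} cylinders $\mathbf w$, which end partway through a block $\mathbf i_k$ or partway through $\mathbf j|_{\ell_{n_k}}$. For these I would use quasi-multiplicativity to bound $\varphi^s$ of the partial word from below by products of pieces. Inside a free block, the weight restricted to prefixes is again governed by the pressure sums, by super/submultiplicativity. Inside the fixed segment, the cylinder mass stays constant while $\varphi^s$ decreases, at a rate controlled by $\Lambda(s)$. So I would choose the block lengths so that the slack $e^{n_k\epsilon}$ dominates the loss $e^{-\ell_{n_k}\Lambda(s)}$, which is exactly what $P_{\mathbf j,\ell}(s)>0$ guarantees.
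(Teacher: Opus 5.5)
Your proposal is correct and follows essentially the argument of Koivusalo--Ram\'irez (the survey only cites the result): quasi-multiplicativity gives $P_{\mathbf j,\ell}=P_{\boldsymbol A}-\Lambda$, with Kingman supplying $\Lambda$ for typical $\mathbf j$. Falconer's singular-value covers with the Hausdorff--Cantelli lemma give the upper bound for every $\boldsymbol t$, and a Cantor measure on concatenations $\mathbf i_1\mathbf j|_{\ell_{n_1}}\mathbf i_2\mathbf j|_{\ell_{n_2}}\cdots$ with Falconer's transversality energy estimate gives the lower bound.

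Two small repairs are needed. First, $\mu([\mathbf w])\lesssim\varphi^s(A_{\mathbf w})$ alone only bounds $\sum_{\mathbf w}\mu([\mathbf w])^2/\varphi^s(A_{\mathbf w})$ by $\sum_n 1=\infty$. Compute the energy at a non-integer $s'<s$ instead, using $\varphi^s(A_{\mathbf w})\le\varphi^{s'}(A_{\mathbf w})\,(\max_i\sigma_1(A_i))^{(s-s')|\mathbf w|}$. Second, if $\ell_n/n$ is unbounded then $P_{\mathbf j,\ell}\equiv-\infty$ on $(0,\infty)$, so your continuity/unique-zero step fails there; in that case $s_{\mathrm{KR}}$ must be read as $\inf\{s\ge0:P_{\mathbf j,\ell}(s)\le0\}=0$.
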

In particular, when $\lim_n\ell_n/n=0$, the shrinking-target set has the same
Hausdorff dimension as $K_{\boldsymbol t}$ for almost all $\boldsymbol t$. If $\lim_n\ell_n/n=\infty$, its Hausdorff dimension is zero for every symbolic centre $\mathbf j$ and translation $\boldsymbol t$.
%With the assumptions of Theorem \ref{thm:koivusalo-ramirez-affine},if $\lim_{n\to\infty}\ell_n/n$ exists, then almost surely
%\begin{equation*}\label{eq:KR-three-regimes}
% \dim_{\mathrm H}W(\Phi_{\mathbf t},\mathbf j,\ell)
% =\begin{cases}
%  \dim_{\mathrm H}K_{\boldsymbol t},
 %    &\lim_n\ell_n/n=0,\\
 % s_{\mathrm{KR}},
 %    &\lim_n\ell_n/n\in(0,\infty),\\
 % 0,
 %    &\lim_n\ell_n/n=\infty.
 %\end{cases}
%\end{equation*} Koivusalo and Ram\'irez also obtained a zero--full law for a natural Gibbs measure.

B\'ar\'any and Troscheit \cite{BaranyTroscheit2021} allowed the centres of cylinder targets to vary  with $n$. They assumed that, for $s>0$, there exist $C_s>0$ and $K_s\in\mathbb N$ such that, for all finite words
$\mathbf i,\mathbf j$, there exists $\mathbf k\in\mathcal A^{K_s}$ satisfying
\begin{equation}\label{nquasi-multiplicativity condition}
 \varphi^s(A_{\mathbf i\mathbf k\mathbf j})\geq C_s\varphi^s(A_{\mathbf i})\varphi^s(A_{\mathbf j}).
\end{equation}
This condition is similar to, but different from, that given in \cite{KoivusaloRamirez2018}. For a sequence $\boldsymbol\lambda=(\lambda_n)_{n\geq1}$, define 
\begin{equation*}\label{eq:BT-inverse-pressure}
 \alpha_{\boldsymbol\lambda}(s):=\liminf_{n\to\infty}\frac{-\log\varphi^s(A_{\lambda_n})}{n}
\end{equation*}
and
\begin{equation*}\label{eq:BT-critical-exponent}
 s_{\mathrm{BT}} := \inf\{s>0:{P_{\boldsymbol A }(s)}\le\alpha_{\boldsymbol\lambda}(s)\}.
\end{equation*}
\begin{theorem}[B\'ar\'any--Troscheit~\cite{BaranyTroscheit2021}]
\label{thm:barany-troscheit-affine}
Suppose that $\boldsymbol A$ satisfies condition \eqref{nquasi-multiplicativity condition}  and $\max_i\sigma_1(A_i)<1/2$. Then 
\begin{equation*}\label{eq:BT-exact-dimension}
 \dim_{\mathrm H}
 W(\Phi_{\mathbf t},\boldsymbol\lambda) =\min\{d,s_{\mathrm{BT}}\}
\end{equation*}
for Lebesgue-almost all $\boldsymbol t$. 
\end{theorem}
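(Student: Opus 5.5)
The proof splits into an upper bound valid for every translation $\boldsymbol t$ and a lower bound for Lebesgue-almost every $\boldsymbol t$, in the spirit of Falconer's affinity-dimension argument.

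\textbf{Upper bound.} Write the symbolic set as $\limsup_n E_n$, where
\[
E_n=\bigcup_{\mathbf i\in\mathcal A^n}[\mathbf i\lambda_n].
\]
Each projected cylinder $\pi_{\boldsymbol t}[\mathbf i\lambda_n]$ lies in an ellipse with semi-axes comparable to $\sigma_1(A_{\mathbf i\lambda_n}),\dots,\sigma_d(A_{\mathbf i\lambda_n})$. With $k=\lfloor s\rfloor$, such an ellipse is covered by about
\[
\prod_{j\le k}\frac{\sigma_j}{\sigma_{k+1}}
\]
cubes of side $\sigma_{k+1}$, which contributes $\lesssim\varphi^s(A_{\mathbf i\lambda_n})$ to the $s$-cost. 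Submultiplicativity gives $\varphi^s(A_{\mathbf i\lambda_n})\le\varphi^s(A_{\mathbf i})\varphi^s(A_{\lambda_n})$. Hence
\[
Z_n(s)\lesssim\Big(\sum_{\mathbf i\in\mathcal A^n}\varphi^s(A_{\mathbf i})\Big)\varphi^s(A_{\lambda_n})\le e^{n(P_{\boldsymbol A}(s)+\varepsilon)}\,e^{-n(\alpha_{\boldsymbol\lambda}(s)-\varepsilon)}
\]
for large $n$, where the second factor uses the liminf defining $\alpha_{\boldsymbol\lambda}(s)$ along a suitable tail. The liminf only controls $\varphi^s(A_{\lambda_n})$ along a subsequence, so this must be handled with care. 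If $P_{\boldsymbol A}(s)<\alpha_{\boldsymbol\lambda}(s)$, the sum $\sum_n Z_n(s)$ converges, and Lemma~\ref{lem:hausdorff-cantelli} gives $\dim_{\mathrm H}\le s$. Taking the infimum over such $s$ yields $s_{\mathrm{BT}}$; the bound by $d$ is trivial.

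\textbf{Lower bound.} Fix $s<\min\{d,s_{\mathrm{BT}}\}$, so $P_{\boldsymbol A}(s)>\alpha_{\boldsymbol\lambda}(s)$. Choose a subsequence $n_k$, growing very fast, along which $-\frac1{n_k}\log\varphi^s(A_{\lambda_{n_k}})$ is close to $\alpha_{\boldsymbol\lambda}(s)$. On $\Sigma$, build a Cantor-type measure $\mu$ supported on sequences that hit $[\lambda_{n_k}]$ at time $n_k$ for every $k$. Between consecutive hits, the free blocks are weighted by an $s$-equilibrium-type measure for $\varphi^s$. Condition \eqref{nquasi-multiplicativity condition} lets us glue blocks by inserting connector words $\mathbf k$ of bounded length $K_s$; these cost only a subexponential loss in the time indices.

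The aim is the symbolic estimate
\[
\mu([\mathbf w])\lesssim\varphi^{s'}(A_{\mathbf w})
\]
for all finite $\mathbf w$, with $s'$ slightly less than $s$. This should follow from quasi-multiplicativity, because the fast growth of $n_k$ makes each forced block $\lambda_{n_k}$ negligible compared with the gain $e^{n_k(P_{\boldsymbol A}(s)-\alpha_{\boldsymbol\lambda}(s))}$. The delicate case is a word $\mathbf w$ ending in the middle of a forced block, and handling it requires monotonicity of $\varphi^s$ under extension.

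\textbf{Main obstacle: transversality.} To transfer the symbolic estimate to $\mathbb R^d$, show that the Falconer-type energy integral
\[
\int_{\boldsymbol t}\iint|\pi_{\boldsymbol t}\omega-\pi_{\boldsymbol t}\tau|^{-s'}\,\td\mu\,\td\mu\,\td\boldsymbol t
\]
is finite. The standard transversality lemma, which requires $\max_i\sigma_1(A_i)<1/2$, bounds the $\boldsymbol t$-integral by $C/\varphi^{s'}(A_{\omega\wedge\tau})$, where $\omega\wedge\tau$ is the common prefix. Then
\[
\iint\varphi^{s'}(A_{\omega\wedge\tau})^{-1}\,\td\mu\,\td\mu\le\sum_n\sum_{|\mathbf w|=n}\frac{\mu([\mathbf w])^2}{\varphi^{s'}(A_{\mathbf w})}\le\sum_n\sum_{|\mathbf w|=n}\mu([\mathbf w])\frac{\varphi^{s'}(A_{\mathbf w})}{\varphi^{s''}(A_{\mathbf w})},
\]
with intermediate exponents $s''<s'<s$ chosen so that the last sum converges geometrically. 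The energy criterion then gives $\dim_{\mathrm H}W(\Phi_{\mathbf t},\boldsymbol\lambda)\ge s''$ for almost every $\boldsymbol t$. Letting $s''\to\min\{d,s_{\mathrm{BT}}\}$ along a countable sequence completes the proof.

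The main difficulty I anticipate is making the mass estimate uniform across the forced blocks while the connector words keep changing the combinatorics.
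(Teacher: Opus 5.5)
The survey only quotes this theorem, so there is no in-text proof to compare with. Your architecture is the standard one for generic self-affine results of this type: a covering bound valid for every $\boldsymbol t$, and, for almost every $\boldsymbol t$, a symbolic Frostman measure fed into the Falconer--Solomyak energy estimate (this is where $\max_i\sigma_1(A_i)<1/2$ enters). The genuine gap is the estimate $\mu([\mathbf w])\lesssim\varphi^{s}(A_{\mathbf w})$ for \emph{all} finite $\mathbf w$. It carries the entire lower bound, and you only assert it.

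The case you single out, $\mathbf w$ ending inside a forced block, is the easy one: $\mu$ is constant along the block, and monotonicity reduces it to the end of the block. The case your sketch does not handle is $\mathbf w=\mathbf p\mathbf k\mathbf u$, where $\mathbf p$ ends with a forced block, $\mathbf k$ is the connector, and $\mathbf u$ is a proper prefix of the next free block $\mathbf v$. The connector provided by \eqref{nquasi-multiplicativity condition} depends on the whole word $\mathbf v$. It gives $\varphi^s(A_{\mathbf p\mathbf k\mathbf v})\ge C_s\varphi^s(A_{\mathbf p})\varphi^s(A_{\mathbf v})$, but no comparable lower bound for $\varphi^s(A_{\mathbf p\mathbf k\mathbf u})$. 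Meanwhile, with equilibrium-type weights, $\mu([\mathbf p\mathbf k\mathbf u])$ can be of order $\mu([\mathbf p])\varphi^s(A_{\mathbf u})e^{-|\mathbf u|P_{\boldsymbol A}(s)}$. The only unconditional bound, $\varphi^s(A_{\mathbf p\mathbf k\mathbf u})\ge\varphi^s(A_{\mathbf p})\sigma_d(A_{\mathbf k\mathbf u})^s$, can be exponentially too small in $|\mathbf u|$, and $|\mathbf u|$ runs up to the full length of the free block.

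A standard repair is to keep connectors at bounded distance from one another. Fix a large $m$ and fill each free block with sub-blocks $\mathbf b\in\mathcal A^m$, each preceded by a connector chosen for the current prefix and $\mathbf b$. Choose $\mathbf b$ with probability $\varphi^s(A_{\mathbf b})/Z_m$, where $Z_m=\sum_{|\mathbf b|=m}\varphi^s(A_{\mathbf b})\ge e^{mP_{\boldsymbol A}(s)}$ by submultiplicativity. Pad with fixed symbols so that $\lambda_{n_k}$ starts exactly at position $n_k+1$. Then the ratio $\mu([\mathbf w])/\varphi^s(A_{\mathbf w})$ behaves as follows:
\begin{itemize}
\item each completed sub-block multiplies it by at most $C_s^{-1}e^{-mP_{\boldsymbol A}(s)}$;
\item stopping inside a sub-block, connector or padding costs only a factor depending on $m$, $K_s$ and $\min_i\sigma_d(A_i)$;
\item the forced block $\lambda_{n_k}$ multiplies it by at most $C_s^{-1}e^{n_k(\alpha_{\boldsymbol\lambda}(s)+\varepsilon)}$.
\end{itemize}
The effective rate $(mP_{\boldsymbol A}(s)-\log(1/C_s))/(m+K_s)$ tends to $P_{\boldsymbol A}(s)$. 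So the strict inequality $P_{\boldsymbol A}(s)>\alpha_{\boldsymbol\lambda}(s)$ absorbs the connector losses, and a sparse choice of $(n_k)$ yields $\mu([\mathbf w])\le c\,\varphi^s(A_{\mathbf w})$ for every $\mathbf w$.

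Alternatively, follow Falconer and give mass proportional to $\varphi^s(A_{\mathbf w})$ to \emph{all} admissible words of a given length. Prefixes are then controlled by submultiplicativity alone, and the connectors are needed only to bound the normalising sums from below.

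Two smaller points. First, in the upper bound your worry is backwards. The identity $\liminf_n\frac{-\log\varphi^s(A_{\lambda_n})}{n}=\alpha_{\boldsymbol\lambda}(s)$ gives $\varphi^s(A_{\lambda_n})\le e^{-n(\alpha_{\boldsymbol\lambda}(s)-\varepsilon)}$ for \emph{all} large $n$, which is exactly what the Hausdorff--Cantelli sum needs; the subsequence matters only for the lower bound. Passing to the infimum uses that $P_{\boldsymbol A}$ is strictly decreasing while $\alpha_{\boldsymbol\lambda}$ is non-decreasing. Second, Falconer's transversality lemma requires a non-integer exponent in $(0,d)$. This is harmless, since you only need a countable sequence of exponents.
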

{In fact, the inequality $\dimH W(\Phi_{\mathbf t},\boldsymbol\lambda) \le\min\{d,s_{\mathrm{BT}}\}$ holds for all $\boldsymbol t$.}  If $s_{\mathrm{BT}}>d$, then  {$W(\Phi_{\mathbf t},\boldsymbol\lambda)$} has positive $d$-dimensional Lebesgue measure for almost every  $\boldsymbol t$. 

A further extension is given by the path-dependent shrinking targets of Koivusalo, Liao, Morris and Rams \cite{KoivusaloLiaoRams2022}.  Let
$L:\Sigma\to(\mathbb R_+)^{\mathbb N}:\mathbf i \longmapsto(L_n(\mathbf i ))_{n\geq1}$, where $L_n(\mathbf i )$ depends only on $\mathbf i|_n$. For a finite word $\mathbf i$, define $\underline{L}(\mathbf i):=L_{|\mathbf i|}(\omega)$, for any $\omega\in[\mathbf i]$. Suppose that
$L_n(\mathbf i )\to\infty$ for any $\mathbf i \in\Sigma$ and that there is
$\kappa>0$ such that for all finite words $\mathbf i,\mathbf k$
\[
 \bigl|\underline{L}(\mathbf i\mathbf k)-\underline{L}(\mathbf i)-\underline{L}(\mathbf k)\bigr|\leq\kappa.
\]

 Given $\mathbf j\in\Sigma$,  define
\begin{equation*}\label{eq:KLR-path-dependent-set}
W(\Phi_{\boldsymbol t},\mathbf j,L):=\pi_{\boldsymbol t} \left\{ \mathbf i\in\Sigma: \sigma^n\mathbf i \in[\mathbf j|_{L_n(\mathbf i)}] \ \text{for infinitely many }n \right\}.
\end{equation*} 
If $u$ is not an integer, $\mathbf j|_u$ means $\mathbf j|_{\lfloor u\rfloor}$.
Define
\begin{equation}\label{eq:KLMR-limsup-pressure}
 P_{\mathbf j,L}^*(s):=\limsup_{n\to\infty}\frac{1}{n}\log\sum_{\mathbf i\in\mathcal A^n}\varphi^s\bigl(A_{\mathbf i\,\mathbf j|_{L_n(\mathbf i)}}\bigr).
\end{equation}
When the limsup is a limit, denote it by $P_{\mathbf j,L}(s)$.
\begin{theorem}[Koivusalo--Liao--Morris--Rams {\cite{KoivusaloLiaoRams2022}}]
\label{thm:koivusalo-liao-rams-affine}
Suppose that each $A_i$   ($i\in\mathcal{A}$) is invertible, $\max_{i\in\mathcal A}\sigma_1(A_i)<\frac{1}{2}$ and that $(A_i)_{i\in\mathcal A}$ is completely reducible. Let $L$ satisfy the assumptions above. Let $\mu$ be an ergodic $\sigma$-invariant probability measure on $\Sigma$. Then, for $\mu$-almost every $\mathbf j$, the limits in \eqref{eq:KLMR-limsup-pressure} exists and is independent of
$\mathbf j$. For these $\mathbf j$, for Lebesgue almost all $\boldsymbol t$,
\[
 \dim_{\mathrm H}W(\Phi_{\boldsymbol t},\mathbf j,L)=\min\{d,s_0\},
\]
where $s_0$ is the unique value for which $P_{\mathbf j,L}(s_0)=0$
\end{theorem}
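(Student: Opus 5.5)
The plan has three parts: prove existence and $\mathbf j$-independence of $P_{\mathbf j,L}(s)$, prove the upper bound for every $\boldsymbol t$ by covering, and prove the lower bound for Lebesgue-almost every $\boldsymbol t$ by an energy argument in the spirit of Falconer and Solomyak, as in the proofs of Theorems~\ref{thm:koivusalo-ramirez-affine} and \ref{thm:barany-troscheit-affine}.

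\textbf{Existence and independence of the pressure.} Complete reducibility gives a quasi-multiplicativity of the type \eqref{nquasi-multiplicativity condition}: there is a finite connecting word $\mathbf k$, of bounded length, with
\[
\varphi^s(A_{\mathbf i\mathbf k\mathbf j})\ge C_s\varphi^s(A_{\mathbf i})\varphi^s(A_{\mathbf j}).
\]
Together with submultiplicativity this makes $\varphi^s(A_{\mathbf i\mathbf j|_{m}})$ comparable to $\varphi^s(A_{\mathbf i})\varphi^s(A_{\mathbf j|_m})$ up to constants and bounded insertions. The bounded-defect additivity of $\underline L$ then shows that
\[
n\mapsto \log\sum_{\mathbf i\in\mathcal A^n}\varphi^s\bigl(A_{\mathbf i\,\mathbf j|_{L_n(\mathbf i)}}\bigr)
\]
is, up to $O(1)$ errors, the ergodic sum of a subadditive-type cocycle over $(\Sigma,\sigma,\mu)$ evaluated at $\mathbf j$. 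More precisely, the dependence on $\mathbf j$ enters only through $\log\varphi^s(A_{\mathbf j|_m})$ at the scales $m=L_n(\mathbf i)$. Kingman's subadditive ergodic theorem gives $\frac1m\log\varphi^s(A_{\mathbf j|_m})\to\Lambda(s)$ for $\mu$-a.e.\ $\mathbf j$. Splitting the sum over $\mathbf i$ according to the approximate value $L_n(\mathbf i)\approx tn$, the pressure becomes a variational expression: a supremum over $t$ of (the growth rate of the weighted count of $\mathbf i$ with $L_n(\mathbf i)\approx tn$) $+\,t\Lambda(s)$. This expression exists and does not depend on $\mathbf j$. Continuity and strict decrease in $s$ then give the unique zero $s_0$.

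\textbf{Upper bound (all $\boldsymbol t$).} For each $n$, cover the level-$n$ set $\pi_{\boldsymbol t}\{\mathbf i:\sigma^n\mathbf i\in[\mathbf j|_{L_n(\mathbf i)}]\}$ by the images of the cylinders $[\mathbf i\,\mathbf j|_{L_n(\mathbf i)}]$. Each image is contained in an ellipse-like set with semi-axes $\sigma_k(A_{\mathbf i\mathbf j|_{L_n(\mathbf i)}})$. Cover each such set by $\sigma_1\cdots\sigma_k/\sigma_{k+1}^{k}$ balls of radius $\sigma_{k+1}$, so that its cost in $s$-measure is at most $\varphi^s(A_{\mathbf i\,\mathbf j|_{L_n(\mathbf i)}})$. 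For $s>s_0$ we have $P^*_{\mathbf j,L}(s)<0$, so the total cost over $n$ is summable, and Lemma~\ref{lem:hausdorff-cantelli} gives $\dim_{\mathrm H}\le\min\{d,s_0\}$.

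\textbf{Lower bound (a.e.\ $\boldsymbol t$).} Fix $s<\min\{d,s_0\}$ and build a Cantor subset $F\subset\Sigma$ of the symbolic limsup set. Choose rapidly increasing times $n_k$. Between $n_{k-1}+L$ and $n_k$, allow words $\mathbf i$ drawn from a family of nearly maximal weight realizing $P_{\mathbf j,L}(s)>0$ (that is, the dominant $t$ in the variational expression), then force the block $\mathbf j|_{L_{n_k}(\mathbf i)}$. Define a measure $\nu$ on $F$ by distributing mass proportionally to $\varphi^s$ weights. The $\kappa$-defect assumption on $L$ keeps the forced lengths controlled under concatenation. Positive pressure and sparse forcing then give the key estimate
\[
\nu([\mathbf w])\lesssim\varphi^{s'}(A_{\mathbf w})
\]
for all cylinders $\mathbf w$ and some $s'\in(s,s_0)$. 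Next, use the standard transversality estimate available when $\max\sigma_1<1/2$:
\[
\int \bigl|\pi_{\boldsymbol t}\omega-\pi_{\boldsymbol t}\tau\bigr|^{-s}\,\td\boldsymbol t\lesssim\varphi^s(A_{\omega\wedge\tau})^{-1}.
\]
Combining the two estimates, the integrated energy $\int I_s(\pi_{\boldsymbol t*}\nu)\,\td\boldsymbol t$ is bounded by $\sum_{\mathbf w}\nu([\mathbf w])^2/\varphi^s(A_{\mathbf w})<\infty$. So $I_s(\pi_{\boldsymbol t*}\nu)<\infty$ for almost every $\boldsymbol t$, which gives $\dim_{\mathrm H}W\ge s$.

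I expect the main obstacle to be the construction step: getting the Gibbs-type bound $\nu([\mathbf w])\lesssim\varphi^{s'}(A_{\mathbf w})$ at every intermediate scale. This is hard because the forced targets have path-dependent lengths and the weights are only submultiplicative. My first attempt would be to restrict to words with $L_n(\mathbf i)$ in a narrow window and with $\frac1m\log\varphi^s(A_{\mathbf j|_m})$ already close to $\Lambda(s)$. I would then apply quasi-multiplicativity, with its bounded connecting words, to glue the blocks, taking $n_k$ sparse enough that all multiplicative constants are absorbed into $\varphi^{s'}$ with $s'$ slightly larger than $s$.
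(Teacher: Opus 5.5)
Your covering upper bound via Lemma~\ref{lem:hausdorff-cantelli} and the transversality/energy step are the standard ones and are fine. The gap is your opening claim that complete reducibility yields the connecting-word quasi-multiplicativity \eqref{nquasi-multiplicativity condition}. It does not. The rest of your argument rests on that claim: the comparability $\varphi^s(A_{\mathbf i\,\mathbf j|_m})\approx\varphi^s(A_{\mathbf i})\varphi^s(A_{\mathbf j|_m})$, the variational formula with a single exponent $\Lambda(s)$, and the gluing/Gibbs bound.

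Diagonal tuples are completely reducible. Yet for $A_1=\operatorname{diag}(a,b)$, $A_2=\operatorname{diag}(b,a)$ with $\tfrac12>a>b>0$ and $0<s<1$, one has $\varphi^s(A_1^nA_{\mathbf k}A_2^n)\asymp(ab)^{ns}$ for every word $\mathbf k$ of bounded length, while $\varphi^s(A_1^n)\varphi^s(A_2^n)=a^{2ns}$.

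The failure persists at the level of growth rates. Take $\mathcal A=\{1,2,3\}$, $A_1=A_2=\operatorname{diag}(a,b)$, $A_3=\operatorname{diag}(b,a)$, $\mu=\delta_{333\cdots}$ and $L_n\equiv n$. Then
\[
\sum_{\mathbf i\in\mathcal A^n}\varphi^s(A_{\mathbf i}A_3^n)\asymp b^{ns}(2a^s+b^s)^n+a^{ns}(a^s+2b^s)^n ,
\]
so $P_{\mathbf j,L}(s)=\max\{\log(2a^s+b^s)+s\log b,\ \log(a^s+2b^s)+s\log a\}$. Your variational expression instead gives the growth of $\sum_{\mathbf i}\varphi^s(A_{\mathbf i})$ plus $\Lambda(s)=s\log a$, that is $\log(2a^s+b^s)+s\log a$, which is strictly larger. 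For $a=0.4$, $b=0.1$ both zeros lie in $(0,1)$, so your lower bound would exceed your correct upper bound.

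The step that breaks is the estimate at the end of a forced block. For the words $\mathbf i$ dominating $\sum_{\mathbf i}\varphi^s(A_{\mathbf i})$, the quantity $\varphi^s(A_{\mathbf i}A_3^n)$ is exponentially smaller than $\varphi^s(A_{\mathbf i})\varphi^s(A_3^n)$. Hence $\nu([\mathbf i\,3^n])\lesssim\varphi^{s'}(A_{\mathbf i}A_3^n)$ cannot hold.

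So complete reducibility is a genuinely different hypothesis from \eqref{nquasi-multiplicativity condition}. As written, your argument at best covers tuples satisfying \eqref{nquasi-multiplicativity condition}, i.e.\ the setting of Theorem~\ref{thm:barany-troscheit-affine} with path-dependent lengths.

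What is missing is structural input that actually uses complete reducibility, of the kind Bochi and Morris developed for singular value potentials. You have to control $\varphi^s(A_{\mathbf i}A_{\mathbf j|_m})$ through the irreducible pieces of the exterior powers $\wedge^k\mathbb R^d$. Each piece carries its own exponent along $\mu$-typical $\mathbf j$: in the example these are $s\log b$ and $s\log a$ for the two coordinate lines, not their maximum $\Lambda(s)$. Existence and $\mathbf j$-independence of the limit must then be proved for the partition sum split in this way, and this is where the real work in the statement lies. Finally, the Cantor measure must be built to follow a piece that realizes $P_{\mathbf j,L}(s)>0$, giving $\nu([\mathbf w])\lesssim e^{-\varepsilon|\mathbf w|}\varphi^s(A_{\mathbf w})$, which is all your energy estimate needs.
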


{We now pass from generic affine systems to a fixed class of planar self-affine systems.  Let $M>N\ge2$ and let $K$ be the Bedford--McMullen carpet  associated with a digit set $ \mathcal D\subseteq\{1,\ldots,N\}\times\{1,\ldots,M\}.$ Write $D=\#\mathcal D$, %let $T_a$ denote the number of selected rectangles in the $a$-th column, 
and consider
\[
   T(x,y)=(Nx~(\bmod~{1}),\, My~(\bmod~{1}))
\]
on $K$. Let $\mathcal P_k(\bfx)$ denote the level-$k$ cylinder containing $\bfx\in K$. B\'ar\'any and Rams \cite{BaranyRams2018} considered both cylinder targets and geometric ball targets. For an arbitrary sequence ${(\bfx_n)_{n\in\mathbb{N}}}\subset K$ and a function $f:\mathbb N\to\mathbb N$ satisfying $\lim_{n\to\infty}f(n)/n=c>0$, they found the exact formula of 
\[
\dim_{\mathrm H} \left\{ \bfx\in K:T^n \bfx\in\mathcal P_{f(n)}(\bfx_n)  \text{ for infinitely many }n \right\}.
\]
In particular, the formula is independent of the target centres.  

For geometric balls, let $\mu$ be an ergodic $T$-invariant probability measure with $\operatorname{supp}\mu=K$, and let ${(\bfx_n)_{n\in\mathbb{N}}}$ be identically distributed random variables with common distribution $\mu$, not necessarily independent.  If  $\psi : \mathbb{N} \to \mathbb{R}^+$ satisfy
$$ \lim_{n\to\infty} \frac{-\log \psi(n)}{n\log N}=c>0,$$
then B\'ar\'any and Rams \cite{BaranyRams2018}  obtained the Hausdorff dimension  formula for
\begin{equation*}\label{eq:BM-ball-target}
 \left\{  \bfx\in K:T^n \bfx\in B(\bfx_n,\psi(n))  \text{ for infinitely many }n \right\},
\end{equation*}
which may, however, depend on the common distribution  of the target centres. They also established a deterministic symbolic counterpart of the
ball-target result. For sufficiently large $c$, 
the cylinder target and geometric ball target sets both have Hausdorff dimension $\frac{\log D}{(1+c)\log N}.$ In this range, the dimension is independent of the target centres.}

Wang and Wu \cite{WangWu2021} obtained an exact formula for rectangular
targets in products of homogeneous Cantor sets. For $1\le i\le d$, let $C_i$ be a homogeneous Cantor set invariant under $T_i(x)=b_ix\pmod1$. Put $\delta_i:=\dim_{\mathrm H}C_i,$ $ a_i:=\log b_i$, and let $\mathcal C:=\prod_{i=1}^dC_i$. Write $T=T_1\times\cdots\times T_d.$ 
For $\bfy=(y_{1},\ldots,y_{d})\in \mathcal C$ and non-increasing functions $\psi_i : \mathbb N \to (0,\infty)$, define
\begin{equation*}\label{eq:WW-rectangular-target}
 W(T,\bfy,\boldsymbol\psi):=\left\{  \bfx\in \mathcal C: \|b_i^n x_i-x_{0,i}\| <\psi_i(n) \text{ for every }i \text{ and infinitely many }n\right\}.
\end{equation*}
Let $\mathcal{U}(\boldsymbol\psi)$ be the set of accumulation points of 
\begin{equation}\label{accumulationpoints}
\left\{\left(\frac{- \log \psi_1(n)}{n},\dots, \frac{- \log\psi_d (n)}{n}\right): n \ge 1\right\}.
\end{equation}
For {$\boldsymbol v=(v_1,\dots, v_d)\in \mathcal U$}, put 
$${\mathscr S}(\boldsymbol v):=\{a_i,a_i+v_i:1\le i\le d\}.$$
For $u\in{\mathscr S}(\boldsymbol v)$, define
\[
 \begin{split}
 K_1(u)&:=\{i:a_i\ge u\},\quad K_2(u):=\{i:a_i+v_i\le u\}\setminus K_1(u),\\
 K_3(u)&:=\{1,\ldots,d\}\setminus\bigl(K_1(u)\cup K_2(u)\bigr),
 \end{split}
\]
and
\begin{equation*}\label{eq:WW-dimension-number}
 s(\boldsymbol v) :=\min_{u\in\mathscr{S}(\boldsymbol v)}
 \left\{\sum_{i\in K_1(u)}\delta_i+\sum_{i\in K_2(u)}\delta_i +\frac{\sum_{i\in K_3(u)}a_i\delta_i-\sum_{i\in K_2(u)}v_i\delta_i}{u}\right\}.
\end{equation*}
{We shall encounter several quantities of this form, and in each case the same threefold partition of the index set is used, with defining inequalities and the weights adapted to the context.}
\begin{theorem}[Wang--Wu {\cite{WangWu2021}}]
\label{thm:wang-wu-cantor-targets}
Let $\psi_i : \mathbb N \to (0,\infty)$ be non-increasing for all $i$. If $\mathcal{U}$ is bounded, then
\begin{equation*}\label{eq:WW-general-rates}
 \dim_{\mathrm H}W(T,y,\boldsymbol\psi) = \sup_{\boldsymbol v\in \mathcal U}s(\boldsymbol v).
\end{equation*}
\end{theorem}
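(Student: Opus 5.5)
The proof splits into an upper bound obtained from natural covers and a lower bound obtained from a Cantor-type construction. First I reduce to an exponential regime. Since $\mathcal U$ is bounded, the exponents $\frac{-\log\psi_i(n)}{n}$ stay in a compact set. A standard subsequence argument then shows that only the accumulation points $\boldsymbol v\in\mathcal U$ matter, up to $\varepsilon$-perturbations. Monotonicity of the $\psi_i$ lets me pass between a scale $n$ and nearby scales.

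For the upper bound, fix $\varepsilon>0$. For all large $n$, the vector $(-\log\psi_i(n)/n)_i$ lies within $\varepsilon$ of some $\boldsymbol v\in\mathcal U$, and by compactness only finitely many $\boldsymbol v$ are needed. The $n$-th level set $W_n$ is a union of rectangles. Each lies inside a level-$n$ cylinder of $\mathcal C$ of side $b_i^{-n}$ in coordinate $i$, and has side about $b_i^{-n}\psi_i(n)\approx e^{-n(a_i+v_i)}$. Intersected with $\mathcal C$, each such rectangle behaves like a product of homogeneous Cantor pieces. Their number is about $\prod_i e^{n a_i\delta_i}$.

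For each candidate scale $r=e^{-nu}$ with $u\in\mathscr S(\boldsymbol v)$, I cover every rectangle by balls of radius $r$, distinguishing three kinds of coordinate:
\begin{itemize}
\item coordinates with $a_i\ge u$ ($K_1$): the whole Cantor set is resolved at scale $r$, costing $e^{nu\delta_i}$;
\item coordinates with $a_i+v_i\le u$ ($K_2$): the rectangle side is larger than $r$, costing $e^{n(u-a_i-v_i)\delta_i}$ times the $e^{na_i\delta_i}$ cylinders;
\item the remaining coordinates ($K_3$): the number of cylinders is $e^{na_i\delta_i}$, and the side is already below $r$.
\end{itemize}
Multiplying these counts by $r^s$ and summing over $n$ shows that the series converges whenever $s>s(\boldsymbol v)+O(\varepsilon)$. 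The minimum over $u$ in $s(\boldsymbol v)$ reflects choosing the best scale. The Hausdorff--Cantelli lemma (Lemma~\ref{lem:hausdorff-cantelli}) then gives $\dim_{\mathrm H}\le\sup_{\boldsymbol v}s(\boldsymbol v)$. The only routine verification is that the optimal $u$ can be taken among the breakpoints $\mathscr S(\boldsymbol v)$, since the exponent is piecewise linear-fractional in $u$.

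For the lower bound, fix $\boldsymbol v\in\mathcal U$ and a sparse subsequence $n_k$ along which the exponent vector converges to $\boldsymbol v$. I would build a Cantor subset of $W(T,\bfy,\boldsymbol\psi)$. At stage $k$, inside each surviving rectangle I keep all level-$n_k$ subcylinders of $\mathcal C$ and then restrict to the target rectangle of sides $b_i^{-n_k}\psi_i(n_k)$ around the preimage of $\bfy$. Mass is distributed uniformly, following the natural Cantor measure coordinatewise. The sparsity $n_{k+1}\gg n_k$ makes the earlier stages negligible. Alternatively, I could invoke a rectangle-to-rectangle mass transference principle of the type in \cite{WangWu2021,WangWuXu2015}: it would transfer from the full-measure statement for the limsup of the enlarged rectangles $\prod_i B(\cdot,b_i^{-n})$, which is trivially of full measure, to the shrunk rectangles, and it yields exactly a formula of the form $s(\boldsymbol v)$. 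I would prefer the direct construction for self-containedness.

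The main obstacle is the mass estimate $\nu(B(\bfx,r))\lesssim r^{s(\boldsymbol v)-\varepsilon}$ for all $r$ between $e^{-n_{k+1}(\max_i(a_i+v_i))}$ and $e^{-n_k\min_i a_i}$. A ball of radius $r$ meets the anisotropic rectangles differently in each coordinate. Writing $r=e^{-nu}$ and counting coordinatewise according to the same three-way split $K_1,K_2,K_3$ gives $\log\nu(B)/\log r$ as a linear-fractional function of $u$ on each interval between consecutive breakpoints. Its infimum is therefore attained at a point of $\mathscr S(\boldsymbol v)$ and equals $s(\boldsymbol v)$. The mass distribution principle (Proposition~\ref{prop:mass-distribution}) then gives $\dim_{\mathrm H}\ge s(\boldsymbol v)$, and taking the supremum over $\mathcal U$ completes the proof.
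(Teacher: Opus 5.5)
Your proposal is correct. The survey gives no proof of its own. In \cite{WangWu2021} the upper bound is the same natural-cover argument you give. The lower bound there comes from their rectangle-to-rectangle mass transference principle on the product $\prod_i C_i$ of Ahlfors-regular spaces. Along a subsequence $n_k$ realising $\boldsymbol v$, the enlarged rectangles $\prod_i B(z_i,b_i^{-n_k})$ centred at the points $z\in T^{-n_k}\bfy$ cover $\mathcal C$, so the full-measure hypothesis is automatic. The principle then returns exactly $s(\boldsymbol v)$, after perturbing the exponents by $\varepsilon$ and using continuity of $s$.

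That is the alternative you mention; your preferred direct construction replaces the general theorem by an explicit Cantor set, and it works here. The targets are coordinatewise and each $T_i$ is full-branch on $C_i$, so $W_n=\prod_i W_{n,i}$, and your Cantor set and uniform measure are products of one-dimensional ones. Hence $\nu(B(\bfx,r))=\prod_i\nu_i(B(x_i,r))$, and at scale $r=e^{-n_ku}$ one gets $\log\nu(B(\bfx,r))/\log r\ge E_{\boldsymbol v}(u)-o(1)$, where
\[
E_{\boldsymbol v}(u)=\frac1u\sum_{i=1}^d\delta_i\bigl(\min\{u,a_i\}+\max\{0,u-a_i-v_i\}\bigr).
\]
This is exactly your covering exponent at the same $u$. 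The function $E_{\boldsymbol v}$ is continuous and constant for $u\le\min_i a_i$. It is non-decreasing for $u\ge\max_i(a_i+v_i)$, and of the form $c_1+c_2/u$ between consecutive points of $\mathscr S(\boldsymbol v)$. So its infimum is attained on $\mathscr S(\boldsymbol v)$, and both bounds equal $s(\boldsymbol v)$.

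The mass transference route buys generality: it needs no product or cylinder structure and allows arbitrary centres, so the same principle covers many other limsup sets of rectangles. Its price is a substantially harder general theorem. Your route is self-contained and makes the scale-by-scale matching of the two bounds transparent. It does, however, rely on the product and cylinder structure and on $\bfy\in\mathcal C$, which guarantees that each preimage rectangle contains a product of sub-cylinders of comparable size.

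One step needs justification. Boundedness of $\mathcal U$ alone does not keep the exponent vectors in a compact set; this is where monotonicity of the $\psi_i$ enters. Put $w_i(n)=-\log\psi_i(n)/n$; then $w_i(n+1)\ge\frac{n}{n+1}w_i(n)$. Suppose $\mathcal U\neq\varnothing$ and the sequence were unbounded. Its maximum coordinate could then only descend slowly and would pass through every intermediate size, producing arbitrarily large accumulation points, which contradicts boundedness of $\mathcal U$.
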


Earlier results for generic self-affine systems did not generally require separation, but formulated the problem through symbolic codings and their projections. Baker and Koivusalo \cite{BakerKoivusalo2024} 
studied shrinking target sets for overlapping iterated function systems. They obtained results on Lebesgue measure and a lower bound  on Hausdorff dimension in a special diagonal setting.

{%Jordan and Koivusalo \cite{JordanKoivusalo2025} recently treated geometric targets for a fixed overlapping affine family.} 

{Jordan and Koivusalo~\cite{JordanKoivusalo2025} recently studied
geometric shrinking targets for the planar affine family introduced by
Przytycki and Urbański.}
For $1/2<\lambda<1$, consider
\[
 f_0(x,y)=\left(\lambda x,\frac y2\right),\quad f_1(x,y)= \left(\lambda x+1-\lambda,\frac y2+\frac12\right),
\]
and let $F_\lambda$ be its attractor. $E_{\lambda}:F_\lambda\to F_\lambda$ be the expanding map defined by the inverse branches, with a fixed
choice on points having more than one coding.  For $\bfz\in F_\lambda$ and
$0<\gamma<1$, define
\begin{equation*}\label{eq:PU-shrinking-target}
 W(E_\lambda,\bfz,\gamma):= \left\{\bfx\in F_\lambda: E_\lambda^n \bfx\in Q(\bfz,\gamma^n) \text{ for infinitely many }n\right\},
\end{equation*}
where $Q(\bfz,r)$ denotes the square of radius $r$ centred at $\bfz$. They computed the Hausdorff dimension of  $W(E_{\lambda},\bfz,\lambda)$ in three cases.  First, for  $\bfz\in F_\lambda$ and $\lambda\in(1/2,1/(2\gamma))$ outside an exceptional set of Hausdorff dimension zero,
\begin{equation*}\label{eq:PU-first-regime}
 \dim_{\mathrm H} W(E_\lambda,\bfz,\gamma)=\frac{-\log 2}{\log(\lambda\gamma)}.
\end{equation*}
Second, if $\lambda\in(1/(2\gamma),1)$ lies outside an exceptional set of Hausdorff dimension zero and $\bfz$ satisfying a unique expansion condition, then
\begin{equation*}\label{eq:PU-unique-expansion}
 \dim_{\mathrm H} W(E_\lambda,\bfz,\gamma) = 2+\frac{\log\lambda}{\log 2} -\frac{\log\gamma}{\log(\gamma\lambda)}.
\end{equation*}
Finally, for Lebesgue-almost all $\lambda$ in region of transversality (see Definition 2.3 in \cite{JordanKoivusalo2025}) and for almost every $\bfz$ with respect to the natural measure
on $F_\lambda$,
\begin{equation*}\label{eq:PU-typical-centre}
 \dim_{\mathrm H} W(E_\lambda,\bfz,\gamma) = \frac{2\log 2+\log\lambda}{\log(2/\gamma)}.
\end{equation*}

\subsection{Matrix transformations}

Matrix transformations provide another important non-conformal setting
for shrinking target problems.  We begin with the one-dimensional
$\beta$-transformation and then consider higher-dimensional matrices,
where expansion rates and the lattice distribution of inverse-image
components govern the dimension. We conclude with the partly understood hyperbolic case.

\subsubsection{The one-dimensional case.}
Throughout this section, unless stated otherwise, $\psi:\mathbb N
\to(0,\infty)$ denotes an arbitrary positive function, and no monotonicity is assumed.  {Recall that the lower exponential order at infinity of $\psi$ is given by}
\[
\alpha=\alpha(\psi):=\liminf_{n\to\infty}\frac{-\log\psi(n)}{n}.
\]
For $\beta>1$, the beta-transformation is defined by
\[
 T_\beta x=\beta x~(\bmod\ {1}), \quad x\in[0,1).
\]
 Given  $y\in[0,1)$,  write
\[
 W(T_\beta,y,\psi):=\{x\in[0,1):T_\beta^n x\in B(y,\psi(n))\text{ for infinitely many }n\}.
\]

\begin{theorem}[Shen--Wang {\cite{ShenWang2013}}]
\label{thm:shen-wang-beta}
Let $\psi:\mathbb N\to(0,\infty)$ be a positive function and $y\in [0,1)$. Then for any $\beta>1$,
\begin{equation*}\label{eq:beta-shrinking-matrix}
 \dim_{\mathrm H} W(T_\beta,y,\psi)=\frac{\log\beta}{\alpha+\log\beta}.
\end{equation*}
\end{theorem}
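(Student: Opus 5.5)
The plan is to prove the two inequalities separately, working with the symbolic coding of $T_\beta$ by $\beta$-admissible words. For $\omega\in\Sigma_\beta^n$ (admissible words of length $n$) let $I_n(\omega)$ be the corresponding cylinder of order $n$. Two facts from $\beta$-expansion theory will be used throughout. First, $\#\Sigma_\beta^n\le \beta^{n+1}/(\beta-1)$. Second, $T_\beta^n$ restricted to $I_n(\omega)$ is affine with slope $\beta^n$, so $|I_n(\omega)|\le\beta^{-n}$, and $T_\beta^n I_n(\omega)$ is an interval $[0, \beta^n|I_n(\omega)|)$. We may assume $\alpha<\infty$; when $\alpha=\infty$ the upper bound below already gives dimension $0$.

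\textbf{Upper bound.} For each $n$, the level set $W_n=T_\beta^{-n}B(y,\psi(n))$ meets each $I_n(\omega)$ in a single interval of length at most $2\psi(n)\beta^{-n}$. Hence $W_n$ is covered by at most $c\beta^n$ intervals of length $2\psi(n)\beta^{-n}$. Fix $\varepsilon>0$. For all large $n$ we have $\psi(n)\le e^{-n(\alpha-\varepsilon)}$ along every $n$. Note that $\alpha$ is a $\liminf$, so the bound $-\log\psi(n)/n\ge\alpha-\varepsilon$ holds eventually. Then
\[\sum_n \beta^n\bigl(\psi(n)\beta^{-n}\bigr)^s\le\sum_n \beta^{n}e^{-ns(\alpha-\varepsilon+\log\beta)},\]
which converges once $s>\log\beta/(\alpha-\varepsilon+\log\beta)$. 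Lemma~\ref{lem:hausdorff-cantelli} then gives the upper bound, and we let $\varepsilon\to0$.

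\textbf{Lower bound.} Choose a subsequence $n_k$, growing very fast, along which $-\log\psi(n_k)/n_k\to\alpha$. We will build a Cantor subset of $W(T_\beta,y,\psi)$ using only "full" cylinders, i.e.\ those with $|I_n(\omega)|=\beta^{-n}$ and $T^n_\beta I_n(\omega)=[0,1)$. Two standard facts are needed here:
\begin{itemize}
\item any admissible word followed by a block of at least $M$ zeros (with $M$ depending on $\beta$) is full;
\item concatenations of full words are full.
\end{itemize}
To handle a general $\beta$ cleanly, first approximate $\beta$ from below by $\beta_N$ whose expansion of $1$ is finite. The full words for $\beta_N$ form a subshift of finite type $\Sigma_{\beta_N}$ with entropy $\log\beta_N\to\log\beta$, and every such word is full for $\beta$ as well. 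At stage $k$, inside each surviving full cylinder of level $n_{k-1}+m_{k-1}$, we do the following:
\begin{enumerate}
\item choose all $\beta_N$-free words of length roughly $n_k-n_{k-1}-m_{k-1}$, each followed by $M$ zeros, so that the resulting cylinders are full;
\item inside each of these, take the subinterval mapped by $T_\beta^{n_k}$ onto $B(y,\psi(n_k))$. This sub-ball contains a full cylinder of level $n_k+m_k$, where $\beta^{-m_k}\asymp\psi(n_k)$, up to a bounded loss. Since $[0,1)$ is a union of full cylinders of each level up to a constant-factor gap, this costs only a bounded number of extra digits.
\end{enumerate}

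Next, define a measure $\nu$ by splitting mass uniformly among the children at each stage. The mass distribution principle (Proposition~\ref{prop:mass-distribution}) then gives the bound, by comparing $\nu(B(x,r))$ with $r^s$ for $s<\log\beta_N/(\alpha+\log\beta)$. Two scale ranges have to be checked:
\begin{itemize}
\item \emph{Free stretch} ($\beta^{-n_k}\le r\le\beta^{-(n_{k-1}+m_{k-1})}$): the mass decays like $\beta_N^{-\ell}$ while the length decays like $\beta^{-\ell}$.
\item \emph{Target stretch} ($\beta^{-n_k-m_k}\le r\le\beta^{-n_k}$): the mass is constant while $r$ shrinks. The worst case is $r=\beta^{-n_k-m_k}$, where $\nu\approx\beta_N^{-n_k}$ and $r\approx\beta^{-n_k}e^{-n_k\alpha}$. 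This yields exactly the exponent $\log\beta_N/(\log\beta+\alpha)$, once $n_k\gg n_{k-1}+m_{k-1}$ makes the earlier stages negligible.
\end{itemize}

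Finally, let $N\to\infty$. The main obstacle is the non-full-branch structure: ensuring that every chosen cylinder is full, so that the images are all of $[0,1)$ and the target ball can be hit. Equally, one must keep uniform control of the number and length of these cylinders so that balls meet only boundedly many of them. I would handle this through the $\beta_N$ approximation and the zero-padding lemma rather than working with general cylinders directly.
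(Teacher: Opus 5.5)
The survey does not prove this theorem; it only cites Shen--Wang. Your plan is the standard argument for it. For the upper bound you cover $W_n$ by the pieces $I_n(\omega)\cap T_\beta^{-n}B(y,\psi(n))$. For the lower bound you build a Cantor set of full cylinders from $\beta_N$-admissible words and apply the mass distribution principle, with the worst scale at the end of each target stretch. The upper bound is complete as written, and your two-regime mass computation giving $\log\beta_N/(\log\beta+\alpha)$ is right. However, two of the auxiliary facts you quote are false for general $\beta$, and the second one carries weight.

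\textbf{Zero-padding.} The zero-padding claim fails for arbitrary admissible words. Suppose $\varepsilon(1,\beta)=\varepsilon_1\varepsilon_2\cdots$ has $\varepsilon_{k+1}=\cdots=\varepsilon_{k+M}=0$. Then $\varepsilon_1\cdots\varepsilon_k0^M$ is admissible, but $T_\beta^{k+M}$ maps its cylinder onto $[0,T_\beta^{k+M}1)\neq[0,1)$. Since $\varepsilon(1,\beta)$ may have unbounded zero runs, no uniform $M$ exists. Likewise, $\beta_N$-admissible words are not themselves $\beta$-full; the one-letter word $\varepsilon_1$ is a counterexample for non-integer $\beta$. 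What is true, and what step (1) actually uses, is the following. Truncate $\varepsilon(1,\beta)$ at a nonzero digit $\varepsilon_N$ to define $\beta_N$. Then $w0^N$ is $\beta$-full for every $w\in\Sigma_{\beta_N}^n$, because every suffix of $w0^N$ is already lexicographically below $\varepsilon_1\cdots\varepsilon_N$ within its first $N$ symbols. So take $M=N$. This fact also gives $|I_n(w)|\ge\beta^{-(n+N)}$ for $w\in\Sigma_{\beta_N}^n$. That is the length control you need for the non-full intermediate cylinders $I(u\,w_1\cdots w_j)$ in the free stretch, which you flagged but did not state.

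\textbf{Hitting the target.} You claim that $B(y,\psi(n_k))$ contains a full cylinder of level $m_k$ with $\beta^{-m_k}\asymp\psi(n_k)$ at the cost of boundedly many digits. That claim is not true for every $\beta$: gaps between consecutive full cylinders of order $m$ can be unboundedly longer than $\beta^{-m}$. For instance, take $\varepsilon(1,\beta)$ with
\begin{itemize}
\item $\varepsilon_1=2$,
\item $\varepsilon_j=1$ for $3^i<j\le 2\cdot3^i$,
\item $\varepsilon_j=0$ for $2\cdot 3^i<j\le 3^{i+1}$, for all $i\ge0$.
\end{itemize}
Then the longest interval $[a,1)$ containing no full cylinder of order $m=3^k$ has length at least $(k+1)\beta^{-m-1}$. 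The correct replacement is the Bugeaud--Wang lemma: among any $m+1$ consecutive cylinders of order $m$, at least one is full. Hence any interval of length $(m+3)\beta^{-m}$ contains a full cylinder of order $m$, and you may take $m_k\le-\log_\beta\psi(n_k)+O(\log n_k)$. This subexponential loss does not change the exponent $\log\beta_N/(\log\beta+\alpha)$. With these two repairs your argument goes through.
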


\noindent In a later relevant work, Coons, Hussain and Wang \cite{coonshussain2016} obtain a Hausdorff measure dichotomy law of the set $W(T_{\beta},y,\psi)$.
%In particular, for $r_n=e^{-\alpha n}$,
%\begin{equation}\label{eq:beta-exponential-matrix}
% \dimH W(T_\beta,\boldsymbol r,y) =\frac{\log\beta}{\alpha+\log\beta}.
%\end{equation}
%This is the basic matrix formula when there is only one expanding direction.

\subsubsection{Expanding matrices in higher dimensions.}Let $GL_d(\Z)$ be the collection of all $d\times d$ non-singular integer matrices.  If $A\in {GL_d}(\Z)$, then
\[
 T_A\bfx=A\bfx\pmod{1},\quad \bfx\in\T^d,
\]
defines a toral endomorphism. For a sequence of centres $\boldsymbol y=(\bfy_n)_{n\geq1}\subset\mathbb T^d$, set
\[
   W(T_A,\boldsymbol y,\psi):=\left\{\bfx\in\mathbb T^d:  T_A^n \bfx\in B(\bfy_n,\psi(n))
      \text{ for infinitely many }n\right\}.
\]
When $\bfy_n=\bfy$ for all $n$, we write $W(T_A,\bfy,\psi)$.

For a general real matrix $A$, the map $T_A$ can instead be defined as $\bfx\mapsto A\bfx\pmod1$ on $[0,1)^d$, where reduction modulo one is taken coordinatewise. Unless $A$ is integer, this map does not define a group endomorphism of $\T^d$.  In general,
\[
T_A^{\,n}\bfx\neq A^n \bfx\pmod 1.
\]
Thus, for a non-integer matrix $A$, the shrinking target set defined by the iterates of $T_A$ must be distinguished from the  limsup set defined by matrix sequence $(A^n)_{n\ge1}$. This distinction is important here because this section considers both real matrix transformations and toral endomorphisms.

Assume first that $A$ is expanding, and order its eigenvalues,
counted with algebraic multiplicity, so that
\[
 1<|\lambda_1|\le\cdots\le|\lambda_d|.\]
Put $ \ell_i:=\log|\lambda_i|$, $1\le i\le d$. 

The inverse image of a ball under $T_A^n$ is a union of sets comparable to
ellipsoids, whose principal semi-axes are governed by the expansion rates $\ell_1,\ldots,\ell_d$. %Covering these ellipsoids by balls at each principal scale gives
The standard covering argument gives
\begin{equation}\label{eq:toral-covering-bound}
 \dimH W(T_A,\bfy,\psi)\le D_{\rm cov}(\alpha):=\min_{1\le i\le d} \frac{i\ell_i+\sum_{j>i}\ell_j}{\alpha+\ell_i}.
\end{equation}
If $\psi$ is non-increasing, Hill and Velani \cite{HillVelani1999} proved that this upper bound is attained whenever
\[
 \alpha>\ell_d-\ell_1.
\]
%In this range, the number and sizes of the inverse-image components determine the dimension.  Outside this range, their distribution may also be relevant.
In other words, when the targets shrink faster than the difference between the largest and smallest expansion rates, the Hausdorff dimension is completely determined by the number and sizes of the inverse-image components. For smaller values of $\alpha$, some directions may expand much faster than others. At the covering scale associated with a slower direction, many inverse-image components may therefore be too close. The Hausdorff dimension may then depend not only on the number and shape of these components, but also on how their centres are distributed.

 Li, Liao, Velani and Zorin \cite{LiLiaoVelaniZorin2023} later considered diagonal real matrices with positive entries, and obtained an exact formula for rectangular targets. Let  $A=\operatorname{diag}(\lambda_1,\ldots,\lambda_d)$ with $1<\lambda_1\le\cdots\le\lambda_d$. In this case, $T_A$ can be regarded as a product of beta-transformations
\[
 T_A=T_{\lambda_1}\times\cdots\times T_{\lambda_d}
 \quad\text{on }[0,1)^d.
\]
For
\[
   \boldsymbol\psi=(\psi_1,\ldots,\psi_d),\quad \psi_i:\mathbb N\to(0,\infty),
\]
and $\bfy=(y_1,\ldots,y_d)\in[0,1)^d$, define
\[
   R\bigl(\bfy,\boldsymbol\psi(n)\bigr) :=\left\{ \bfx\in[0,1)^d:|x_i-y_i|\leq\psi_i(n) \text{ for }1\leq i\leq d\right\}
\]
and
\[
   W(T_A,\bfy,\boldsymbol\psi):=\left\{ \bfx\in[0,1)^d:T_A^n \bfx\in R\bigl(\bfy,\boldsymbol\psi(n)\bigr)\text{ for infinitely many }n \right\}.
\]
Recall that {$\mathcal{U}(\boldsymbol\psi)$} denotes the set of accumulation points associated with $\boldsymbol\psi$, as defined in \eqref{accumulationpoints}, {and the threefold partition introduced following \eqref{accumulationpoints}. In the present setting, we partition the index set in the same way, with the comparsion now involving $\ell_k$ and $\ell_i+t_i$. For ${\mathbf t}\in \mathcal{U}(\boldsymbol\psi)$ and $1\le i\le d$, set
\[ \mathcal K_1(i):= \left\{ k:\ell_k>\ell_i+t_i
   \right\},\,\, 
   \mathcal K_2(i):=
   \left\{ k:\ell_k+t_k\leq\ell_i+t_i
   \right\},\]
\[\mathcal K_3(i) := \{1,\ldots,d\}\setminus \bigl( \mathcal K_1(i)\cup\mathcal K_2(i)\bigr),\]
and
\[   \theta_i(\mathbf t):=\sum_{k\in\mathcal K_1(i)}1 +\sum_{k\in\mathcal K_2(i)}\left(1-\frac{t_k}{\ell_i+t_i}\right) +
   \sum_{k\in\mathcal K_3(i)}\frac{\ell_k}{\ell_i+t_i}.\]
}

\begin{theorem}[Li--Liao--Velani--Zorin {\cite{LiLiaoVelaniZorin2023}}]\label{thm:llvz-rectangular-targets} 
Let $A$ satisfy the assumptions above. Suppose that $\mathcal U(\boldsymbol\psi)$ is bounded.  Let $\bfy\in[0,1)^d$. Then
\begin{equation}\label{eq:toral-eigenvalue-formula-M}
   \dim_{\mathrm H}W(T_A,\bfy,\boldsymbol\psi)
   =
   \sup_{\mathbf t\in{\mathcal{U}(\boldsymbol\psi)}}
   \min_{1\leq i\leq d}\theta_i(\mathbf t).
\end{equation}
\end{theorem}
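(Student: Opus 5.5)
The plan is to prove the upper and lower bounds of \eqref{eq:toral-eigenvalue-formula-M} separately, reducing everything to the geometry of the level sets. Write $W_n:=T_A^{-n}R(\bfy,\boldsymbol\psi(n))$. Since $T_A=T_{\lambda_1}\times\cdots\times T_{\lambda_d}$, $W_n$ is a product of one-dimensional sets. In coordinate $k$, the set $T_{\lambda_k}^{-n}B(y_k,\psi_k(n))$ consists of roughly $\lambda_k^n$ intervals (up to the non-full $\beta$-cylinders) of length about $\lambda_k^{-n}\psi_k(n)$. Their spacing is about $\lambda_k^{-n}$, because full cylinders of order $n$ are well distributed for the $\beta$-map. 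So $W_n$ is, up to constants, a union of $\prod_k\lambda_k^n$ rectangles with side lengths $e^{-n(\ell_k+t_k)}$, arranged in a grid of mesh $e^{-n\ell_k}$, where $t_k\approx -\log\psi_k(n)/n$.

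\emph{Upper bound.} For each $n$ I pick $\mathbf t$ close to the vector $(-\log\psi_k(n)/n)_k$. Boundedness of $\mathcal U(\boldsymbol\psi)$ keeps this vector in a compact set, so along every subsequence it is near some point of $\mathcal U$. Fix an index $i$ and cover $W_n$ by cubes of side $r=e^{-n(\ell_i+t_i)}$. The count splits by direction:
\begin{itemize}
\item for $k\in\mathcal K_1(i)$ the grid mesh in direction $k$ is smaller than $r$, so that direction is filled and costs $r^{-1}$ cubes;
\item for $k\in\mathcal K_2(i)$ each rectangle side exceeds $r$, costing $\lambda_k^n\cdot e^{-n(\ell_k+t_k)}/r$ cubes;
\item for $k\in\mathcal K_3(i)$ each rectangle fits within one cube, costing $\lambda_k^n$ cubes.
\end{itemize}
Then $N_n r^{s}$ is summable in $n$ precisely when $s>\theta_i(\mathbf t)+\varepsilon$. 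Minimising over $i$ and applying Lemma~\ref{lem:hausdorff-cantelli} gives $\dimH W\le \sup_{\mathbf t}\min_i\theta_i(\mathbf t)$. An $\varepsilon$-argument and compactness handle the passage from $\mathbf t$ close to $\mathcal U$ to $\mathbf t$ in $\mathcal U$.

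\emph{Lower bound.} I fix $\mathbf t\in\mathcal U$ and choose a rapidly increasing subsequence $n_j$ along which $-\log\psi_k(n_j)/n_j\to t_k$. I then build a Cantor subset: at stage $j$, inside each surviving rectangle from stage $j-1$, keep the rectangles of $W_{n_j}$ contained in it. Full $\beta$-cylinders inside any interval account for a positive proportion of it, so the number kept is comparable to the volume ratio. Mass is spread uniformly among them. Alternatively, one could invoke a rectangle-to-ball mass transference principle of the type in \cite{WangWu2021,LiLiaoVelaniWangZorin2025}. This would start from the full Lebesgue measure of $\limsup W_n$ with unshrunk targets, which holds since $\sum$ of measures diverges trivially once the targets are enlarged to $\lambda_k^{-n}$-cylinders. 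The principle would output exactly $\min_i\theta_i(\mathbf t)$.

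The main obstacle is the mass distribution estimate at intermediate scales $r$ between $e^{-n_j\ell_d}$ and $e^{-n_j(\ell_1+t_1)}$. There, a ball meets rectangles that are long in some directions and clustered in others. One must check that $\mu(B(x,r))\lesssim r^{\min_i\theta_i(\mathbf t)-\varepsilon}$ at every such scale, with the worst case at the breakpoints $r=e^{-n(\ell_i+t_i)}$ and $r=e^{-n\ell_k}$. Verifying that these breakpoint estimates match $\theta_i$ piecewise-monotonically is the step to carry out carefully. Uniformity of the distribution of full $\beta$-cylinders also needs care, particularly when $\lambda_k$ is not a simple Parry number.
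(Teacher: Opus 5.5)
The survey only quotes this theorem from \cite{LiLiaoVelaniZorin2023} and gives no proof, so the relevant comparison is with the original argument. Measured against it, your outline is correct.

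\textbf{Upper bound.} This is the same natural-cover count as in the cited work. Your compactness step uses that the vectors $\bigl(-\log\psi_k(n)/n\bigr)_k$ are eventually bounded, and that is the right reading of the hypothesis. If only the finite accumulation points were bounded, times at which some $-\log\psi_k(n)/n\to\infty$ could still carry dimension, and the formula can fail.

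\textbf{Lower bound, and your alternative route.} As far as I can tell, the cited work takes essentially what you call the alternative: a ubiquity property of the level sets is fed into the Wang--Wu mass transference principle from rectangles to rectangles. The bounded-exponent setting of that principle appears to be the source of the boundedness assumption on $\mathcal U(\boldsymbol\psi)$; the later extension \cite{LiLiaoVelaniWangZorin2025} to the unbounded setup addresses this. The survey's remark in Section~\ref{PEM} also indicates that a Dolgopyat-type Markov approximation is used there to cope with the non-Markov $\beta$-maps, where you use full cylinders. Three corrections to your version of that alternative:
\begin{enumerate}
\item The principle is rectangles-to-rectangles, not rectangle-to-ball.
\item The full-measure input does not follow from divergence of a sum of measures; divergence alone never suffices for a $\limsup$ set. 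What holds, and what the principle needs, is local ubiquity: for each ball $B$ and all large $n$, the full cylinders of order $n$ fill a fixed proportion of $B$.
\item Wang--Wu minimise over the larger set $\{\ell_k,\ell_k+t_k\}$; the kink observation below shows the extra candidates $\ell_k$ never win.
\end{enumerate}

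\textbf{Your direct Cantor construction.} This is the more self-contained route, and the step you single out as the main obstacle is routine, because everything factorises. Keep, in each coordinate separately and along a common sparse schedule $n_j$, all stage-$j$ target intervals inside the parent interval. This produces $C=\prod_k C_k$ and $\mu=\prod_k\mu_k$, so $\mu(Q(\bfx,r))=\prod_k\mu_k(B(x_k,r))$ for cubes.

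Write $r=e^{-n_j\rho}$ in the range of scales governed by stage $j$. Up to errors negligible in the exponent, $\mu_k(B(x_k,r))\lesssim e^{-n_jf_k(\rho)}$, where
\begin{itemize}
\item $f_k(\rho)=\rho$ for $\rho\le\ell_k$,
\item $f_k(\rho)=\ell_k$ for $\ell_k\le\rho\le\ell_k+t_k$,
\item $f_k(\rho)=\rho-t_k$ for $\rho\ge\ell_k+t_k$.
\end{itemize}
The function $g(\rho)=\rho^{-1}\sum_kf_k(\rho)$ satisfies $g\le d$ and is of the form $a+b/\rho$ between breakpoints. Its derivative jumps down at each $\ell_k$ and up at each $\ell_k+t_k$. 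Hence its minimum is attained at some $\rho=\ell_i+t_i$, where it equals $\theta_i(\mathbf t)$; the scales $e^{-n\ell_k}$ are never the worst case.

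The distribution of full cylinders is needed only up to subexponential factors. The lemma that any $n+1$ consecutive cylinders of order $n$ contain a full one already gives this for every $\beta>1$, simple Parry or not.

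\textbf{Trade-off.} Your route does the multi-scale estimate by hand, which the product structure makes short. The MTP route performs this estimate once, in general form, and asks only for the ubiquity input.
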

When $\psi_1=\cdots=\psi_d=\psi$, if $\alpha<\infty$, then Theorem~\ref{thm:llvz-rectangular-targets} reduces to
\begin{equation*}
\label{eq:toral-eigenvalue-formula}
   \dim_{\mathrm H}W(T_A,\bfy,\psi)= D_{\mathrm{eig}}(\alpha),
\end{equation*}
where
\begin{equation}\label{eq:toral-eigenvalue-formula}D_{\rm eig}(\alpha):=\min_{1\le i\le d}\frac{ i\ell_i+\sum_{j>i}\ell_j -\sum_{\ell_j>\ell_i+\alpha} (\ell_j-\ell_i-\alpha)}{\alpha+\ell_i}.
 \end{equation}
If $\alpha=\infty$, then $\dim_{\mathrm H}W(T_A,\bfy,\psi)=0.$
They also obtained the same expression \eqref{eq:toral-eigenvalue-formula} for expanding integer matrices diagonalizable over $\Z$  and have
positive eigenvalues when $\psi_1=\cdots=\psi_d=\psi$. 
 Later, He and Liao \cite{HeLiao2026} pointed out that \eqref{eq:toral-eigenvalue-formula-M} holds for all expanding diagonal real matrices, under the assumption that each $\psi_i$ is non-increasing.

%The same line of research also considers targets other than balls.  Li, Liao, Velani and Zorin \cite{LiLiaoVelaniZorin2023} studied rectangular and multiplicative targets, while

Further extensions concern other target sets and orbit-dependent shrinking rates.  Li, Liao, Velani and Zorin \cite{LiLiaoVelaniZorin2023} studied multiplicative targets, while in two separate works, Hussain and Wang \cite{hussain2018,HussainWang2023}  studied coordinate-dependent shrinking rates determined, respectively, by the usual sequence and by ergodic sums in product $\beta$-transformations. He \cite{HeHausdorffMeasure2025} established Hausdorff measure dichotomies for rectangular and multiplicative targets whose centres may depend Lipschitz-continuously on the initial point. The divergence result for rectangular targets, the parameters are assumed to be integers, or more generally to satisfy the corresponding Markov condition. 
He \cite{He2025Parallelepiped} further considered arbitrary parallelepiped targets for product $\beta$-transformations.  In this setting, the orientation of the target can affect the dimension even when its side lengths are fixed.  % After using Gram--Schmidt orthogonalisation, he obtained the exact Hausdorff dimension and a large-intersection result.  This shows that, in addition to the expansion rates of the matrix, the geometry of the target may enter the dimension formula.

%The formula $D_{\rm eig}(\alpha)$ is not valid for every expanding integer matrix.
For general expanding integer matrices, if $\psi$ is non-increasing, Hu, Persson, Wu and Zhang \cite{HuPerssonWuZhang2025} proved that for any  $\boldsymbol y=(\bfy_n)_{n\geq1}$,
\begin{equation}\label{eq:HPWZ-bounds}
 D_{\rm eig}(\alpha) \leq \dimH W(T_A,\boldsymbol y,\psi) \leq D_{\rm cov}(\alpha).
\end{equation} 
The gap in \eqref{eq:HPWZ-bounds} exists. Hu, Huang, Li and Wu \cite{HuHuangLiWu2026} constructed an example for which
\[
 D_{\rm eig}(\alpha) < \dimH W(T_A,\bfy,\psi)< D_{\rm cov}(\alpha),
\]
and it shows that neither $D_{\rm eig}(\alpha)$ nor $D_{\rm cov}(\alpha)$ determines the dimension in general.

Hu, Huang, Li and Wu \cite{HuHuangLiWu2026} gave a more general treatment, incorporating the distribution of the inverse-image components, in the setting of matrix sequences.  Let $\mathcal A=(A_n)_{n\geq1}$, with $ A_n\in GL_d(\mathbb R)$, and define
\begin{equation*}\label{eq:matrix-sequence-W-shrinking}
W(\mathcal A,\bfy,\psi): =\{\bfx\in[0,1)^d:A_n\bfx~(\bmod\,1)\in B(\bfy,\psi(n))  \text{ for infinitely many }n\}.
\end{equation*}
If $A\in GL_d(\mathbb Z)$ and $A_n=A^n$, this is exactly $W(T_A, \bfy,\psi)$. For a lattice $\Lambda\subset\mathbb R^d$, let
\[
 m_j(\Lambda) :=\inf\left\{r>0:   \dim\operatorname{span}\bigl(\Lambda\cap B(0,r)\bigr)\ge j  \right\}, \quad 1\le j\le d,
\]
be its successive minima.  
\begin{theorem}[A simplified version of {\cite{HuHuangLiWu2026}}]
\label{thm:hhlw-matrix-sequences}
Let $A\in GL_d(\mathbb Z)$ with eigenvalues $1<|\lambda_1|\le\cdots\le|\lambda_d|$, and let $\psi:\mathbb N\to(0,\infty)$ be non-increasing. Then for any $\bfy\in[0,1)^d$,
\begin{equation}\label{eq:toral-lattice-bounds}
 D_{\rm eig}(\alpha)\le \dimH W(T_A, \bfy,\psi) \le D_{\rm lat}(\alpha),
\end{equation}
where
\begin{equation}\label{eq:toral-lattice-upper}
 D_{\rm lat}(\alpha): =\min_{1\le i\le d}
 \frac{ i\ell_i+\sum_{j>i}\ell_j -\sum_{h_j>\ell_i+\alpha}(h_j-\ell_i-\alpha)}
 {\alpha+\ell_i},
\end{equation}
and
\begin{equation*}\label{eq:successive-minima-exponents}
 h_j:=\liminf_{n\to\infty}
      \frac{-\log m_j(A^{-n}\mathbb Z^d)}{n}.
\end{equation*}
\end{theorem}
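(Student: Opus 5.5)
The lower bound $D_{\rm eig}(\alpha)\le\dimH W(T_A,\bfy,\psi)$ is already part of \eqref{eq:HPWZ-bounds} (Hu--Persson--Wu--Zhang) for non-increasing $\psi$ and constant centres. So the plan is to prove only the upper bound $D_{\rm lat}(\alpha)$, by a covering argument and the Hausdorff--Cantelli lemma (Lemma~\ref{lem:hausdorff-cantelli}). The covers will be adapted both to the shape of the inverse-image components and to how their centres cluster, which is where the lattice $A^{-n}\mathbb Z^d$ enters.

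First, describe the $n$-th level set. The set $W_n:=\{\bfx: A^n\bfx \bmod 1\in B(\bfy,\psi(n))\}$ is the union, over the lattice $\Lambda_n:=A^{-n}\mathbb Z^d$ intersected with $[0,1)^d$ (up to a translate), of ellipsoids $A^{-n}B(\bfy,\psi(n))+\mathbf p$. Their semi-axes are comparable, up to $e^{o(n)}$ factors from Jordan blocks, to $\psi(n)e^{-n\ell_j}$. Their number is $|\det A|^n=e^{n\sum_j\ell_j}$.

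Next, fix $i$ and cover at scale $r_n:=\psi(n)e^{-n\ell_i}\approx e^{-n(\alpha+\ell_i)}$ along a subsequence realising the liminf. In directions $j\le i$ each ellipsoid has width at most $r_n$. In directions $j>i$ it is shorter than $r_n$ and is covered by $e^{n(\ell_j-\ell_i)}$ balls. The naive count $e^{n(i\ell_i+\sum_{j>i}\ell_j)}r_n^{-(\cdot)}$ reproduces $D_{\rm cov}$.

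The improvement comes from merging. Use a reduced basis of $\Lambda_n$ realising the successive minima $m_j(\Lambda_n)\approx e^{-nh_j}$. Whenever $m_j(\Lambda_n)<r_n$, i.e.\ $h_j>\ell_i+\alpha$, many components lie within distance $r_n$ of each other along the $j$-th lattice direction, so they are covered by the same $r_n$-balls. Counting lattice points of $\Lambda_n$ in a box of side $r_n$ via the successive minima, each $r_n$-ball meets about $\prod_{h_j>\ell_i+\alpha}r_n/m_j$ components. This saves a factor $e^{n(h_j-\ell_i-\alpha)}$ for each such $j$. The resulting cover of $W_n$ has cardinality
\[
\exp\Bigl(n\bigl(i\ell_i+\textstyle\sum_{j>i}\ell_j-\sum_{h_j>\ell_i+\alpha}(h_j-\ell_i-\alpha)\bigr)+o(n)\Bigr)
\]
by balls of radius $r_n$. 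Summing $(\#\text{cover})\,r_n^s$ over $n$ and using that $\psi$ is non-increasing (so sums are controlled by exponents along the liminf) gives convergence for $s$ above the $i$-th quotient in \eqref{eq:toral-lattice-upper}. Minimising over $i$ yields $D_{\rm lat}(\alpha)$.

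The main obstacle is the merging count. Two issues must be handled: the reduced lattice directions are not aligned with the eigen-directions of the ellipsoids, and the $h_j$ are only liminf exponents. I would first handle the case where the savings and the ellipsoid's long directions are compatible. That is, I would show that the short lattice vectors (length $<r_n$) span a subspace along which shifting by them keeps the thickened components inside $O(1)$ balls of radius $r_n$. Then I would use Minkowski's second theorem to bound the number of lattice points in $B(0,r_n)$ from below by $\prod_j\max(1,r_n/m_j)$ up to constants. The liminf issue should be resolved by passing to a subsequence along which $m_j(\Lambda_n)\le e^{-n(h_j-\varepsilon)}$ for all $j$ simultaneously. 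If that fails, I would restrict the savings to indices where it holds eventually.
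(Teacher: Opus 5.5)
The survey gives no proof of this statement. It only records that the bounds of \cite{HuHuangLiWu2026} come from the singular values of $A^n$ together with the successive minima of $A^{-n}\mathbb Z^d$, and that is exactly your mechanism. Citing \eqref{eq:HPWZ-bounds} for the lower bound is legitimate, and your clustering idea is right. The gap is at the step you call the main obstacle: you read the liminf in the wrong direction, and the subsequence remedy you propose would not give an upper bound. Since $\alpha$ and $h_j$ are liminfs of $-\log(\cdot)/n$, for every $\varepsilon>0$ and \emph{all} sufficiently large $n$,
\[
\psi(n)\le e^{-n(\alpha-\varepsilon)},\qquad m_j(A^{-n}\mathbb Z^d)\le e^{-n(h_j-\varepsilon)}\quad(1\le j\le d).
\]
These one-sided bounds (small targets, short lattice vectors) are precisely what a cover needs. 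The two-sided comparisons $r_n\approx e^{-n(\alpha+\ell_i)}$ and $m_j\approx e^{-nh_j}$ hold only along subsequences and are never needed. Covering $W_n$ only along a subsequence says nothing about $\limsup_n W_n$ in the Hausdorff--Cantelli lemma. So passing to a subsequence is not an admissible step, while your fallback (indices where the bound holds eventually) in fact covers every index. Monotonicity of $\psi$ plays no role in the upper bound either. Replace $\psi(n)$ by $\rho_n:=e^{-n(\alpha-\varepsilon)}$, which only enlarges the limsup set, and cover at the single scale $r_n:=\rho_n e^{-n\ell_i}$ for every large $n$; then $r_n/m_j\ge e^{n(h_j-\ell_i-\alpha)}$, the $\varepsilon$'s cancelling. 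Covering instead at the actual radius $\psi(n)e^{-n\ell_i}$ would need a separate argument, because the clustering gain $\prod_j\max\{1,r/m_j\}$ shrinks with $r$, and monotonicity of $\psi$ in $n$ does not supply it.

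Your alignment worry is also unfounded. No reduced basis is needed (for $d\ge5$ a basis realising all the minima need not exist), and the lattice count is elementary. Let $\Lambda_n=A^{-n}\mathbb Z^d$, let $C_n=A^{-n}\bfy+\Lambda_n \pmod{\mathbb Z^d}$ be the $|\det A|^n$ centres, and let $P\subseteq C_n$ be a maximal $r_n$-separated subset. The balls $B(\mathbf p,r_n/2)$, $\mathbf p\in P$, are disjoint, and each contains the distinct centres $\mathbf p+(\Lambda_n\cap B(0,r_n/2))$. Hence $\#P\le|\det A|^n/\#(\Lambda_n\cap B(0,r_n/2))$. If $v_1,\dots,v_d\in\Lambda_n$ are linearly independent with $|v_j|=m_j(\Lambda_n)$, the points $\sum_j k_jv_j$ with $k_j\in\mathbb Z$, $|k_j|\le r_n/(4d\,m_j)$, are distinct and lie in $B(0,r_n/2)$. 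This gives $\#(\Lambda_n\cap B(0,r_n/2))\ge(4d)^{-d}\exp\bigl(n\sum_{h_j>\ell_i+\alpha}(h_j-\ell_i-\alpha)\bigr)$. Cover $\mathbf p+A^{-n}B(0,\rho_n)$ by $e^{n\sum_{j<i}(\ell_i-\ell_j)+o(n)}$ balls of radius $r_n$. Your description here is inverted, though your total count is correct: for $j<i$ the semi-axes $\rho_ne^{-n\ell_j+o(n)}$ exceed $r_n$ by $e^{n(\ell_i-\ell_j)+o(n)}$, while for $j>i$ they are at most $r_ne^{o(n)}$. The concentric balls of radius $2r_n$ then cover $\mathbf q+A^{-n}B(0,\rho_n)$ for every centre $\mathbf q$ with $|\mathbf q-\mathbf p|<r_n$, \emph{whatever the direction of $\mathbf q-\mathbf p$}. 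By maximality every centre is of this kind. This yields your cardinality $\exp\bigl(n(i\ell_i+\sum_{j>i}\ell_j-\sum_{h_j>\ell_i+\alpha}(h_j-\ell_i-\alpha))+o(n)\bigr)$ for balls of radius $2r_n$. Hausdorff--Cantelli, then $\varepsilon\to0$ and minimising over $i$, gives $\dimH W(T_A,\bfy,\psi)\le D_{\rm lat}(\alpha)$; for $\alpha=\infty$, run the argument with arbitrary finite $\alpha'$ and use $D_{\rm lat}\le D_{\rm cov}\to0$. With these corrections your argument proves the statement.
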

 Since the last sum in \eqref{eq:toral-lattice-upper} is non-negative,
\[
 D_{\rm lat}(\alpha)\le D_{\rm cov}(\alpha).
\]
The examples in \cite{HuHuangLiWu2026} show that the inequality can be strict.
Thus $D_{\rm lat}$ refines the classical covering upper bound by taking into account the successive minima of $A^{-n}\mathbb Z^d$.

The same work establishes exact formulae for several classes of matrices.  If $A$ is an expanding integer matrix diagonalizable over $\mathbb Q$, then the bounds in \eqref{eq:toral-lattice-bounds} agree and
\[
 \dimH W(T_A, \bfy,\psi) =D_{\rm eig}(\alpha).
\]
The non-diagonalizable case is also treated for expanding Jordan matrices. Let
\[
 A=\operatorname{diag}\bigl(
 J(\lambda_1,n_1),\ldots,J(\lambda_s,n_s)
 \bigr),
 \qquad |\lambda_j|>1.
\]
For $1\le i\le s$, 
\[
 \begin{split}
 \mathcal{K}_1(i)&=\{j:\ell_j>\ell_i+\alpha\},\quad  \mathcal{K}_2(i)=\{j:\ell_j<\ell_i\},\\
 \mathcal{K}_3(i)&=\{1,\ldots,s\}\setminus
         \bigl(\mathcal{K}_1(i)\cup \mathcal{K}_2(i)\bigr).
 \end{split}
\]
Let $\cA=\{A^n\}_{n\in\N}$. Then
\begin{equation}\label{eq:jordan-matrix-formula}
 \dimH W(\cA,\bfy,\psi)
 =\min_{1\le i\le s}
 \left\{
  \sum_{j\in \mathcal{K}_1(i)}n_j
  +\sum_{j\in \mathcal{K}_2(i)}
       \frac{n_j\ell_i}{\ell_i+\alpha}
  +\sum_{j\in \mathcal{K}_3(i)}
       \frac{n_j\ell_j}{\ell_i+\alpha}
 \right\}.
\end{equation}
By rational conjugacy, the same formula applies to the corresponding expanding integer matrices with such a rational Jordan decomposition.

\subsubsection{Hyperbolic integer matrices.}
 Let $A\in GL_2(\mathbb Z)$, and suppose that its eigenvalues satisfy
\[
 0<|\lambda_1|<1<|\lambda_2|. 
\]
Write $\ell_i=\log|\lambda_i|.$

\begin{theorem}[Hu--Persson--Wu--Zhang {\cite{HuPerssonWuZhang2025}}]
\label{thm:hpwz-hyperbolic-matrices}
Let $\psi:\mathbb N\to(0,\infty)$ be non-increasing. Let $\boldsymbol y=(\bfy_n)_{n\geq1}\subset\mathbb T^2$. Then
\begin{equation}
\label{eq:hyperbolic-W}
   \dim_{\mathrm H}W(T_A,\boldsymbol y,\psi)=
   \begin{cases}
      \displaystyle
      \frac{2\ell_2}{\alpha+\ell_2},
      & 0<\alpha<-\ell_1,\\[3ex]
      \displaystyle
      \min\left\{
         \frac{\ell_1+\ell_2}{\alpha+\ell_1},
         \frac{2\ell_2}{\alpha+\ell_2}
      \right\},
      & \alpha>-\ell_1.
   \end{cases}
\end{equation}
\end{theorem}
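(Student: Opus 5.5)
The plan is to study the level sets $W_n:=T_A^{-n}B(\bfy_n,\psi(n))$ directly in the eigenbasis of $A$. Write $e_1,e_2$ for the (stable, unstable) eigendirections. Lifting to $\R^2$, $W_n$ is the image in $\T^2$ of $A^{-n}\bigl(B(\bfy_n,\psi(n))+\Z^2\bigr)$. This is a union of translates, by the lattice $\Lambda_n:=A^{-n}\Z^2$, of one ellipse-like region $E_n$. Its semi-axes are comparable to
\[
a_n=\psi(n)e^{-n\ell_2}\ \text{(along $e_2$)},\qquad b_n=\psi(n)e^{-n\ell_1}=\psi(n)e^{n|\ell_1|}\ \text{(along $e_1$)}.
\]
Since $\Lambda_n\supset\Z^2$ has covolume $|\det A|^{-n}=e^{-n(\ell_1+\ell_2)}$, there are $e^{n(\ell_1+\ell_2)}$ translates modulo $\Z^2$. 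The centre $\bfy_n$ only shifts the whole configuration, so every estimate below is uniform in $\boldsymbol y$. This explains why the formula does not depend on the centres.

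\textbf{Upper bound.} I would apply the Hausdorff--Cantelli lemma (Lemma~\ref{lem:hausdorff-cantelli}) with two covers, after passing to a subsequence realising $\alpha$. The monotonicity of $\psi$ lets one group $n$ into blocks in the standard way.
\begin{itemize}
\item If $\alpha<-\ell_1$, then $b_n\gg1$ along the relevant $n$. Each translate wraps around the torus along the irrational direction $e_1$, so $W_n$ is a union of strips of width $a_n$ with total area $\asymp\psi(n)^2$. Covering by balls of radius $a_n$ costs about $\psi(n)^2a_n^{-2}\cdot a_n^s=e^{2n\ell_2}a_n^s$. The resulting series converges for $s>2\ell_2/(\alpha+\ell_2)$.
\item If $\alpha>-\ell_1$, then $b_n<1$ and the translates are genuine ellipses. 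There are two covers. Covering each ellipse by balls of radius $b_n$ costs $e^{n(\ell_1+\ell_2)}b_n^s$, which gives the threshold $(\ell_1+\ell_2)/(\alpha+\ell_1)$. Covering each ellipse by $b_n/a_n$ balls of radius $a_n$ costs $e^{n(\ell_1+\ell_2)}e^{n(\ell_2-\ell_1)}a_n^s$, which again gives $2\ell_2/(\alpha+\ell_2)$. Taking the better of the two covers gives the minimum in \eqref{eq:hyperbolic-W}.
\end{itemize}

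\textbf{Lower bound.} The key arithmetic input is that the eigenvectors of a hyperbolic $A\in GL_2(\Z)$ have quadratic irrational slopes, hence badly approximable slopes. From this I would derive that $\Lambda_n$ is quasi-uniform at all scales in the following sense. For every box with sides $r_1$ along $e_1$ and $r_2$ along $e_2$ satisfying $r_1r_2\gtrsim e^{-n(\ell_1+\ell_2)}$, the box contains $\asymp r_1r_2e^{n(\ell_1+\ell_2)}$ points of $\Lambda_n$. This is the analogue of the successive-minima information behind $D_{\rm lat}$ in Theorem~\ref{thm:hhlw-matrix-sequences}. Here both minima of $\Lambda_n$ are comparable to $e^{-n(\ell_1+\ell_2)/2}$ in the eigen-coordinates, up to constants; as in the rational diagonalizable case there, this should force the lower and upper bounds to coincide. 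The argument would run through Dirichlet's theorem together with $\|q\theta\|\gg1/q$ applied to the slopes of $e_1,e_2$.

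With this equidistribution in hand, I would build a Cantor subset along a rapidly increasing sequence $n_1<n_2<\cdots$ realising $\alpha$. At stage $k$, inside each surviving piece from stage $k-1$, keep all stage-$n_k$ ellipses or strip pieces, and distribute mass uniformly among them. By the quasi-uniformity, the kept pieces fill the parent piece evenly at every intermediate scale. The sequence $n_k$ grows fast enough that the earlier stages contribute negligibly to the exponent.

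The heart of the proof is then a mass distribution estimate (Proposition~\ref{prop:mass-distribution}) for $\nu(B(\bfx,r))$. This splits into the three ranges $r\le a_n$, $a_n\le r\le\min\{b_n,1\}$, and $r\ge b_n$. The exponent obtained in each range is the corresponding covering exponent, so the worst range reproduces the minimum of the two fractions when $\alpha>-\ell_1$. When $\alpha<-\ell_1$ the ellipses have become strips, the third range is absent, and only $2\ell_2/(\alpha+\ell_2)$ survives.

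An alternative route to the same bound is to first show that the enlarged rectangles have full Lebesgue measure, using mixing or quasi-independence of the toral automorphism, and then apply a rectangle-to-rectangle mass transference principle \cite{WangWu2021}.

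\textbf{Main obstacle.} I expect the hardest step to be the intermediate range $a_n\le r\le b_n$ in the ellipse case. There a ball meets the long ellipses transversally, and one must count how many ellipses from the lattice $\Lambda_n$ it meets. This count is exactly where the bad approximability of the eigen-slopes must be used, uniformly in $n$. The other delicate point is the wrapping regime: there the strips along $e_1$ must be shown to be equidistributed, which again reduces to the Diophantine property of $e_1$.
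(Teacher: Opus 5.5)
Your outline is correct. The survey quotes this theorem from \cite{HuPerssonWuZhang2025} without a proof, so there is no argument in the paper to compare against line by line. What the survey does describe for matrix transformations is the mechanism you use: the number and shape of the inverse-image components come from $\ell_1,\ell_2$, and the distribution of the centres $A^{-n}\mathbb Z^2$ decides the lower bound. Your two covers and your lattice-counting Cantor construction implement exactly this.

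The arithmetic input is best stated as follows. For $v\in\mathbb Z^2\setminus\{0\}$, the product $v^{(1)}v^{(2)}$ of its eigen-coordinates is a fixed nonzero multiple of a norm form of $\mathbb Q(\lambda_1)$, so $|v^{(1)}v^{(2)}|\ge c>0$. By Mahler's criterion, the lattices $\operatorname{diag}(t,t^{-1})L$, where $L$ is the eigen-coordinate image of $\mathbb Z^2$, then lie in a compact set. This gives your box count uniformly over all eccentricities, and that uniformity is what the intermediate range and the wrapping regime actually use. By contrast, balanced Euclidean successive minima of $\Lambda_n$ only control round boxes. Theorem~\ref{thm:hhlw-matrix-sequences} also concerns expanding $A$, so the analogy with $D_{\rm lat}$ should not carry weight.

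Three points need repair.

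First, in the upper bound you cannot pass to a subsequence, since $\limsup W_n$ must be covered by $\bigcup_{n\ge N}W_n$. Instead, enlarge the targets using $\psi(n)\le e^{-n(\alpha-\varepsilon)}$ for all large $n$. The count $e^{n(\ell_1+\ell_2)}b_n/a_n=e^{2n\ell_2}$ does not depend on $\psi$. Your costs $e^{2n\ell_2}a_n^s$ and $e^{n(\ell_1+\ell_2)}b_n^s$ are then controlled for every large $n$, with no need for monotonicity or blocking.

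Second, consider $-\ell_1<\alpha<(\ell_2-\ell_1)/2$. Then $b_n>\rho_n:=e^{-n(\ell_1+\ell_2)/2}$. Once $r\ge\rho_n^2/b_n=e^{-n(\ell_2-\alpha)}$, a ball meets $\asymp rb_n/\rho_n^2$ ellipses and $\nu(B(\bfx,r))\approx r^2$, up to the negligible factor from earlier stages. So the break point is $\rho_n^2/b_n$, not $b_n$, and the range $r\ge b_n$ gives exponent $2$, not $(\ell_1+\ell_2)/(\alpha+\ell_1)$. This is harmless, because
\[
(\ell_1+\ell_2)(\alpha+\ell_2)-2\ell_2(\alpha+\ell_1)=(\ell_2-\ell_1)(\ell_2-\alpha).
\]
Hence for $\alpha<\ell_2$ the minimum in \eqref{eq:hyperbolic-W} is $2\ell_2/(\alpha+\ell_2)$, which is the exponent at $r=a_n$.

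Third, if $A$ is an automorphism and $\alpha>-\ell_1$, then $\Lambda_n=\mathbb Z^2$ and there is a single ellipse of area $<1$ per level, so nesting fails. The formula gives $0$ there, however, so only the upper bound is needed. In the wrapping regime nesting works for every $A$, since you only need $a_{n_{k-1}}\gg e^{-n_k(\ell_2-\alpha)}$ and $\ell_2-\alpha>0$. The Cantor set there is essentially a segment along $e_1$ times a transversal Cantor set of dimension $(\ell_2-\alpha)/(\ell_2+\alpha)$, which gives $2\ell_2/(\alpha+\ell_2)$ directly.

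On alternatives: an energy estimate for normalised Lebesgue measure on $W_n$, as recalled in Section~\ref{subsec:measure-lower-bounds} (\cite{Persson2019,PerssonReeve2015}), uses the same lattice count. It would additionally give a large intersection property, which your Cantor construction does not. Your rectangle-MTP route would need reworking in the wrapping regime, where the side along $e_1$ exceeds $1$.
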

At the critical value $\alpha=-\ell_1$, the answer depends on
the determinant. If $|\det A|>1$, then
\[
 \dimH W(T_A,\boldsymbol y,\psi) =\frac{2\ell_2}{\ell_2-\ell_1}.
\]
If $A$ is an automorphism, then $\ell_1+\ell_2=0$, and the dimension is zero for $\alpha>-\ell_1$. When $\alpha=-\ell_1$, the limsup set may be empty or may
contain a line segment, depending on the sequence of centres.   
No general exact formula is currently known for hyperbolic integer matrices in dimension at least three.

\section{{Recurrent} sets}\label{sec:recurrence}

This section surveys Hausdorff measure and dimension results for  {recurrent} sets. We first consider expanding Markov maps and conformal iterated function systems, and then turn to affine iterated function systems. Finally, we discuss {recurrent} sets for  expanding and hyperbolic matrix transformations.

\subsection{Conformal systems}

\subsubsection{Expanding Markov maps}
\label{subsec:expanding-Markov-recurrence}

The class of expanding Markov maps considered here differs from the
one introduced in Section~3.1. We retain the notation for branches,
cylinders, and repellers, but not the standing assumptions imposed
there. In Section~3.1 each branch was assumed to have the full-image
property. By contrast, the finite-branch systems considered by Yuan
and Li~\cite{YuanLiLipschitz2023} need not be full branch, but satisfy additional  transitivity, and weak \(C^{1+\delta}\) conditions. For countably many branches, their definition additionally requires the full-image property. Throughout this subsection,  let 
$$T:X=\overline{\bigcup_{i\in\mathcal A}I_i}\to[0,1],$$
be an expanding Markov map in the sense of
\cite[Definition~1.1]{YuanLiLipschitz2023}, where  $\mathcal A$ may be finite or countably infinite with $\#\mathcal{A}\ge 2$ and $\{I_i\}_{i\in \mathcal{A}}$ are disjoint open sub-intervals of $[0,1)$. Let $\Sigma_n\subseteq\mathcal A^n$ denote the set of admissible
words of length $n$, let $I_\omega$ be the cylinder corresponding
to $\omega\in\Sigma_n$, and let  {$\Lambda$} be the repeller of $T$.

Let  $\psi:\mathbb N\to(0,\infty)$, and let $\alpha=\alpha(\psi)$ be its lower {exponential} order at infinity.  Define the {recurrent} set
\begin{equation}\label{eq:one-dimensional-recurrence}
 R(T,\psi):=\left\{ x\in {\Lambda} : T^n x\in B(x,\psi(n))\text{ for infinitely many }n\right\}.
\end{equation}
For a continuous function $f:X\to(0,\infty)$, {let $\psi_f(n,x)=\exp\left(-S_nf(x)\right)$. Define
\[
   R(T, \psi_f):= \left\{ x\in\Lambda: T^n x\in B\bigl(x,\psi_f(n,x)\bigr) \ \text{for infinitely many }n \right\}.
\]}

{The pressure used below is the same as that introduced in \eqref{eq:markov-pressure}. For convenience, we recall its explicit form. } For a  potential $g:[0,1)\to\mathbb R$,
 \[
   P(g) := \liminf_{n\to\infty}\frac{1}{n} \log
   \sum_{\omega\in\Sigma^n}\exp\left(\sup_{x\in I_\omega}S_ng(x)\right).
\]
%If $\mathcal A$ is finite, this agrees with the topological pressure introduced in Section~\ref{sec:preliminaries}.
 
 Yuan and Li \cite{YuanLiLipschitz2023} studied the more general sets obtained by replacing the centre $x$ in the target by a sequence of uniformly Lipschitz functions.  Specializing to the identity target centre
gives the following recurrence results.
\begin{theorem}[Yuan--Li {\cite{YuanLiLipschitz2023}}] \label{thm:Yuan-Li-Markov-recurrence}
Let $T$, $f$, and $\psi$ be as above. 
\begin{enumerate}
\item If $\mathcal A$ is finite, then 
\begin{equation*}\label{eq:finite-Markov-Birkhoff-recurrence}
 \dim_{\mathrm H}{R(T, \psi_f)} =s(f),
\end{equation*}
where $s(f)$ is the unique solution of
\[
 P\!\left(-s(f+\log|T'|)\right)=0.
\]
\item If $\mathcal A$ is countably infinite, then
\[\dim_{\mathrm H}{R(T, \psi_f)}=\inf\left\{s\ge0:P\!\left(-s(f+\log|T'|)\right)\le0\right\}.\]

\item  Suppose that $\mathcal A$ is finite.  If $1\le\alpha(\psi)<\infty$, then
\begin{equation*}\label{eq:finite-Markov-recurrence}
 \dim_{\mathrm H}R(T,\psi)=s_\psi,
\end{equation*}
where $s_\psi$ is the unique solution of
\begin{equation*}\label{eq:finite-Markov-recurrence-pressure}
 P\left(-s\bigl(\log|T'|+\log\alpha(\psi)\bigr)\right)=0.
\end{equation*}
If $\alpha(\psi)=\infty$, then $\dim_{\mathrm H}R(T,\psi)=0.$ 

\item Suppose  that $\mathcal A$ is countably infinite.  If $\psi$ is decreasing, $1\le\alpha(\psi)<\infty$, and ${\overline \Lambda}=[0,1]$. Then
\begin{equation*}\label{eq:countable-Markov-recurrence}
 \dim_{\mathrm H}R(T,\psi)=\inf\left\{s\ge0:P\left(-s\bigl(\log|T'|+\log\alpha(\psi)\bigr)\right)\le0\right\}.
\end{equation*}
\end{enumerate}
\end{theorem}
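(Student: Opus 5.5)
The plan is to prove (1) and (2) directly by an upper bound via covering and a lower bound via a Cantor construction. We then derive (3) and (4) from (1)--(2) by comparing $\psi$ with suitable Birkhoff-type rates.

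\emph{Upper bound for (1)--(2).} For $n$ and $\omega\in\Sigma_n$, let $R_n(\omega):=\{x\in I_\omega: |T^nx-x|<\psi_f(n,x)\}$. On the cylinder $I_\omega$ the branch $T^n$ is a bijection onto an interval with derivative comparable to $e^{S_n\log|T'|(x)}$, up to a subexponential distortion factor $e^{n\rho_n}$. The map $x\mapsto T^nx-x$ is therefore strictly monotone with slope $\approx|(T^n)'|\geq \xi^n$. Consequently $R_n(\omega)$ is contained in an interval of length at most $e^{o(n)}\exp(-\sup_{I_\omega}S_n(f+\log|T'|))$. This uses the tempered distortion of $f$, which holds because $f$ is continuous and cylinders shrink uniformly. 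Lemma~\ref{lem:hausdorff-cantelli} then bounds the dimension by the critical exponent of
\[
\sum_n\sum_{\omega\in\Sigma_n}\exp\Bigl(-s\sup_{I_\omega}S_n(f+\log|T'|)\Bigr)e^{o(n)},
\]
which is $\inf\{s:P(-s(f+\log|T'|))\le0\}$. In the countable case we keep the $\inf$ form; in the finite case continuity and strict monotonicity of the pressure in $s$ give the unique root.

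\emph{Lower bound for (1)--(2).} First reduce to finitely many branches. Choose a finite subsystem, with finitely many letters and transitivity connectors, whose pressure approximates the full one. This is standard for countable Markov pressure, as in Reeve's approach to Theorem~\ref{thm:reeve-countable-markov}. On the subsystem, take a Gibbs measure $\mu$ for $-s(f+\log|T'|)$ with $s<s(f)$. Build a Cantor set along a rapidly increasing sequence $n_k$. At stage $k$, inside each surviving cylinder of level $m_{k-1}$, we
\begin{itemize}
\item append a $\mu$-typical free block up to time $n_k$;
\item inside each resulting cylinder $I_\omega$, $|\omega|=n_k$, select the subinterval where $|T^{n_k}x-x|<\psi_f(n_k,x)$.
\end{itemize}
This subinterval exists and contains a subcylinder of comparable length, because $T^{n_k}$ maps $I_\omega$ onto a full interval containing points near $x$, using the transitivity/mixing connector. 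We then continue freely. Mass is distributed according to $\mu$ on the free blocks and uniformly on the forced sub-blocks. The main obstacle is the mass-distribution estimate $\nu(B(x,r))\lesssim r^{s-\varepsilon}$ at intermediate scales, for $r$ between the forced subinterval length and the cylinder length. Here one uses that the forced block costs exactly the factor $e^{-S_{n_k}f}$ while contributing $\mu$-mass $e^{-sS_{n_k}(f+\log|T'|)}$-comparable weight. I would first handle this by choosing $n_k$ so sparse that earlier stages are negligible, and by the Gibbs property for balls meeting several level-$n_k$ cylinders.

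\emph{Parts (3)--(4).} Fix $\varepsilon>0$. Along a subsequence, $\psi(n)\le e^{-n(\alpha-\varepsilon)}$ eventually, and $\psi(n_k)\ge e^{-n_k(\alpha+\varepsilon)}$. Comparing with the constant potentials $f\equiv\alpha\pm\varepsilon$, the upper bound follows from (1)--(2) by monotonicity in $f$. For the lower bound, run the Cantor construction only along the subsequence $n_k$; this is permitted since it already uses sparse times. Letting $\varepsilon\to0$ gives the pressure equation with the constant $\alpha$. The statement writes this constant as $\log\alpha(\psi)$ under the normalization $\alpha\ge1$, so I would check that convention against the definition and otherwise argue identically. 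When $\alpha=\infty$, the covering bound with $f\equiv M$ for every $M$ forces dimension $0$. For (4), the hypothesis $\overline\Lambda=[0,1]$ is what guarantees, without full branches of the finite subsystem, that each forced window near $x$ still contains a cylinder of the subsystem. Monotonicity of $\psi$ lets us interpolate between the sparse times.
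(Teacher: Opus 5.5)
The survey does not prove this theorem. It quotes Yuan--Li, and obtains the statement by specialising their result for uniformly Lipschitz target centres $g_n$ to $g_n=\mathrm{id}$. Your direct argument is the standard scheme behind such results and is sound in outline:
\begin{itemize}
\item for the upper bound, one interval per cylinder, from the monotonicity of $x\mapsto T^nx-x$ on $I_\omega$;
\item for the lower bound, a Cantor construction whose forced blocks are prefixes of $\omega^\infty$ (windows at the periodic point of $T^n|_{I_\omega}$), with sparse $n_k$ making earlier stages negligible;
\item finite subsystems for countable alphabets;
\item sandwiching $\psi$ between $e^{-n(\alpha\pm\varepsilon)}$, with the lower bound built only at times where $\psi(n_k)\ge e^{-n_k(\alpha+\varepsilon)}$, so that no monotonicity of $\psi$ is needed.
\end{itemize}
The direct route buys something the specialisation hides. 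For the identity centre, the recurrence window always contains cylinders $I_{\omega u}$ with $u$ a prefix of $\omega^\infty$, and these belong to any finite full-branch subsystem containing the letters of $\omega$. So your proof never uses $\overline\Lambda=[0,1]$ or the monotonicity of $\psi$ in (4). That is harmless, but the roles you ascribe to these hypotheses are not real ones. Finite subsystems of a full-branch countable map are already full-branch, and you never interpolate between times. The hypotheses presumably come from the Lipschitz-centre version, where $g_n(x)$ can fall outside $\Lambda$ (compare Reeve's example after Theorem~\ref{thm:reeve-countable-markov}).

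There are two points to tighten.

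First, do not hedge on the normalisation. With $\alpha(\psi)=\liminf_n(-\log\psi(n))/n$ as in Section~\ref{sec:preliminaries}, what you prove is $P(-s(\log|T'|+\alpha(\psi)))=0$ for $0\le\alpha(\psi)<\infty$, matching Table~\ref{tab:comparison}. The printed $\log\alpha(\psi)$ with $1\le\alpha(\psi)$ is Yuan--Li's normalisation, with $B=\liminf_n\psi(n)^{-1/n}=e^{\alpha(\psi)}$ in place of $\alpha(\psi)$. Read literally, the printed formula is false: for $Tx=2x \bmod 1$ and $\psi(n)=e^{-n}$ it gives dimension $1$. The true value, by Theorem~\ref{thm:tan-wang-beta-recurrence} with $\beta=2$, is $\log2/(1+\log2)$.

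Second, $f$ is only continuous and $\log|T'|$ only has tempered distortion, so a Gibbs measure for $-s(f+\log|T'|)$ need not exist. Use instead a measure that is Bernoulli on blocks $u$ of a large fixed length $k$, with weights proportional to $\exp(-s\sup_{I_u}S_k(f+\log|T'|))$; in the non-full-branch finite case, join the blocks by bounded connectors. Tempered distortion makes the loss only $o(k)$ per block, and the same mass estimates go through.
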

In the last statement, the monotonicity assumption on $\psi$ may be removed if  $\inf_{x\in\bigcup_{i\in\mathcal A}I_i}|T'(x)|>1.$
For countably many branches, no general formula is obtained in
\cite{YuanLiLipschitz2023} when $\alpha(\psi)=\infty$.

Peng, Tan and Wang \cite{PengTanWang2012} also studied quantitative recurrence properties for the Gauss map through the first return time to a ball centered at the initial point.  They determined the Hausdorff dimensions of the level sets of points with prescribed recurrence time.

\subsubsection{Conformal iterated function systems}
\label{sec:recurrence-conformalifs}
We retain the notation for $C^{1+\varepsilon}$ conformal
iterated function systems introduced in Section~3.1.

 Seuret and Wang~\cite{SeuretWang2015} considered a slightly broader
class of systems. Let
\[
 \Phi=\{\phi_i:i\in\mathcal A\}
\]
be a finite or countable conformal iterated function system on $[0,1]^d$ satisfying the open set condition, and let $J$ be its attractor. The system is assumed to be uniformly contracting, and its
maps extend to $C^1$ conformal diffeomorphisms on a common
neighbourhood of $[0,1]^d$ and satisfy the bounded distortion
property uniformly over all finite words.

Let $\pi:\mathcal A^{\mathbb N}\to J$ be the coding map. For $\mathbf i=(i_1,\ldots,i_n)\in{\mathcal A}^n$, write 
\[\phi_{\mathbf i}:=\phi_{i_1}\circ\cdots\circ\phi_{i_n},\quad J_n(\mathbf i):=\phi_{\mathbf i}([0,1]^d).\]
For $\bfx\in J_n({\mathbf i})$, set
\[
   S_n^{\mathbf i}f(\bfx):=
   f(\bfx)+f\!\left(\phi_{i_1}^{-1}(\bfx)\right)
   +\cdots+
   f\!\left(\phi_{i_1\cdots i_{n-1}}^{-1}(\bfx)\right).
\]
{Assume that $f:[0,1]^d\to(0,\infty)$ satisfies }
\begin{equation}\label{vanishing-variation}
   \operatorname{Var}_1(f)<\infty   \quad\text{and}\quad\operatorname{Var}_n(f)\to 0,
\end{equation}
where $\operatorname{Var}_n(f):=\sup_{\mathbf i\in\mathcal A^n}\sup_{\bfx,\bfy\in J_n({\mathbf i})}|f(\bfx)-f(\bfy)|.$ 
{ 
In some of the literature, condition \eqref{vanishing-variation} is also referred to as the tempered distortion property. It is stronger than the condition adopted in Section 3.1.2.} For such $f$, define
\begin{equation}\label{eq:seuret-wang-recurrence}
   R_\Phi(f):= 
   \left\{
      \bfx\in J:
      \begin{array}{l}
      \text{there exists a coding \(\boldsymbol\omega\in\pi^{-1}(\bfx)\) such that}\\
      \displaystyle
      \left|\bfx-\phi_{\boldsymbol\omega|_n}^{-1}(\bfx)\right|
      <\exp\!\left(-S_n^{\boldsymbol\omega|_n}f(\bfx)\right)
      \text{ for infinitely many }n
      \end{array}
   \right\}.
\end{equation}
Thus, if $\bfx$ has more than one coding, it is enough that the recurrence condition hold along one of them.

For \(t\geq0\), set
\[
   P_\Phi(t;f):=
   \lim_{n\to\infty}\frac1n
   \log\sum_{\mathbf w\in\mathcal A^n}
   \sup_{\bfx\in J_n(\mathbf w)}
   \left(\left|(\phi_{\mathbf w}^{-1})'(\bfx)\right|^{-1}
      \exp\!\left(-S_n^{\mathbf w}f(\bfx)\right)
   \right)^t
\]

\begin{theorem}[Seuret and Wang {\cite{SeuretWang2015}}]
\label{thm:Seuret-Wang-conformal-recurrence}
Let $\Phi$ be a  conformal iterated function system
on $[0,1]^d$ as above. Let $f:[0,1]^d\to(0,\infty)$ satisfy \eqref{vanishing-variation}. Then
\begin{equation*}\label{eq:seuret-wang-dimension}
 \dim_{\mathrm H} R_\Phi(f)=t(f):=\inf\{t\geq0:P_\Phi(t;f)\leq0\}.
\end{equation*}
\end{theorem}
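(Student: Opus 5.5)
We prove the two inequalities $\dim_{\mathrm H}R_\Phi(f)\le t(f)$ and $\dim_{\mathrm H}R_\Phi(f)\ge t(f)$ separately. Both arguments exploit one fact: on each cylinder $J_n(\mathbf w)$ the recurrence condition in \eqref{eq:seuret-wang-recurrence} becomes a shrinking target condition centred at the unique fixed point of $\phi_{\mathbf w}$.

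\textbf{Upper bound.} Fix $\mathbf w\in\mathcal A^n$ and let $\bfx\in J_n(\mathbf w)$ satisfy $|\bfx-\phi_{\mathbf w}^{-1}(\bfx)|<e^{-S_n^{\mathbf w}f(\bfx)}$. Let $\bfz_{\mathbf w}$ be the fixed point of $\phi_{\mathbf w}$. The map $g=\phi_{\mathbf w}^{-1}$ expands by roughly $|(\phi_{\mathbf w}^{-1})'|\gg1$ on $J_n(\mathbf w)$, so $\bfx-g(\bfx)$ is comparable to $g(\bfx)-g(\bfz_{\mathbf w})$. This comparison uses bounded distortion and $|\phi_{\mathbf w}'|\le a_{\max}^n$, perhaps after passing to a bounded power. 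It follows that $|\bfx-\bfz_{\mathbf w}|\lesssim|(\phi^{-1}_{\mathbf w})'|^{-1}e^{-S_n^{\mathbf w}f}$. The variation condition \eqref{vanishing-variation} makes $S_n^{\mathbf w}f$ constant up to $o(n)$ on the cylinder, and this error is harmless at exponential scale. Hence the $n$-th level set is covered by one ball per $\mathbf w\in\mathcal A^n$, of radius $\asymp\sup_{J_n(\mathbf w)}|(\phi_{\mathbf w}^{-1})'|^{-1}e^{-S_n^{\mathbf w}f}$. For $t>t(f)$ we have $P_\Phi(t;f)<0$, so the sum of the $t$-th powers of these radii over $n$ converges, and the Hausdorff--Cantelli lemma (Lemma~\ref{lem:hausdorff-cantelli}) gives $\dim_{\mathrm H}R_\Phi(f)\le t$. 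The countable case works the same way, since convergence of the partition sum is all that is needed.

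\textbf{Lower bound.} Conversely, the ball $B(\bfz_{\mathbf w},c\,r_{\mathbf w})\cap J$ lies in the $n$-th level set for a small constant $c$, where $r_{\mathbf w}$ is the radius above. So $R_\Phi(f)$ contains a limsup set of the form $\{\bfx:\bfx\in B(\bfz_{\mathbf w},c\,r_{\mathbf w})\text{ for infinitely many }\mathbf w\}$. This is essentially $D(T,\cdot,\cdot)$ or Urba\'nski's target set, except that the centre $\bfz_{\mathbf w}$ moves with $\mathbf w$. We first reduce to finitely many symbols: by the approximation property of pressure, $t(f)$ is the supremum of the analogous critical exponents of finite subsystems, so we may assume $\mathcal A$ finite and $t<t(f)$. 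We then build a Cantor subset. Take an equilibrium (Gibbs) measure $\mu_t$ for the potential in $P_\Phi(t;f)$ at a level slightly below $t(f)$. Choose a rapidly increasing sequence $n_k$. At stage $k$, append to the current words a block of free symbols of length $m_k\gg n_{k-1}$, distributed by $\mu_t$, and then enter the small ball around the periodic point. Entering the ball means prescribing a further $\asymp n$ digits repeating the current word, which is what makes $\bfx$ close to $\bfz_{\mathbf w}$. We distribute mass on this Cantor set and verify $\nu(B(\bfx,r))\lesssim r^{t-\varepsilon}$ at every scale, then apply the mass distribution principle (Proposition~\ref{prop:mass-distribution}).

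The main obstacle is this local mass estimate at intermediate scales $r$, between the cylinder scale $|\phi_{\mathbf w}'|$ and the target scale $r_{\mathbf w}$. There the mass of the cylinder is spread over a ball much smaller than the cylinder. The self-referential target means the forced digits copy the word $\mathbf w$ itself, and this creates correlations between the free and forced parts. I would handle it as in Hill--Velani and Tan--Wang. Choose the lengths so that the forced repetition contributes weight $\exp(-tS_nf)$, which is exactly the extra factor in $P_\Phi(t;f)$. Use quasi-Bernoulli properties of the Gibbs measure together with the open set condition to compare ball and cylinder measures, and split the scales into the cases $r\ge|\phi_{\mathbf w}'|$ and $r_{\mathbf w}\le r<|\phi_{\mathbf w}'|$.
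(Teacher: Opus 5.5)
Your outline is correct. It is essentially the standard argument for this result; the survey gives no proof of its own, and this is the covering/Cantor-set scheme of the cited Seuret--Wang paper, following Tan--Wang and Hill--Velani: an upper bound from one ball per word around the fixed point $\bfz_{\mathbf w}$ of $\phi_{\mathbf w}$ plus the Hausdorff--Cantelli lemma, and a lower bound from passing to finite subsystems, building a Cantor set with long free blocks ($m_k\gg n_{k-1}$) alternating with forced blocks that copy the prefix, and applying the mass distribution principle. Two details need tightening. First, $B(\bfz_{\mathbf w},c\,r_{\mathbf w})\cap J$ is not contained in the $n$-th level set in general, since points of neighbouring cylinders need not recur, so you must intersect with $J_n(\mathbf w)$; your symbolic construction does this automatically. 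Second, because $f$ only satisfies \eqref{vanishing-variation}, a Gibbs measure with bounded quasi-Bernoulli constants need not exist, so either replace $f$ by the majorant $f_N\ge f$ equal to $\sup_{J_N(\mathbf u)}f$ on each $J_N(\mathbf u)$ (this only shrinks the set and changes the pressure by at most $t\operatorname{Var}_N(f)$), or use weak Gibbs bounds whose $e^{o(n)}$ errors are absorbed by the growth of $m_k$.
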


The recurrence radius in this result has the ergodic sum form
appearing in \eqref{eq:seuret-wang-recurrence}. In particular,
$f\equiv\alpha$ gives the exponential radius $e^{-\alpha n}$,
whereas an arbitrary numerical sequence $\psi(n)$ is not covered directly.

The Hausdorff measure problem for {recurrent} sets defined by arbitrary numerical sequences was studied later in the homogeneous similar setting by Chang, Wu and Wu~\cite{ChangWuWu2019}.  Let
\[
 \Phi_\rho = \{\phi_j(x)=\rho x+a_j:1\leq j\leq L\}, \quad 0<\rho<1,
\]
be a homogeneous similar iterated function system on $[0,1]$ satisfying the strong separation condition, and let $K$ be its attractor. Put $\gamma:=\dim_{\mathrm H}K .$  The coding map $\pi_\rho$  is then one-to-one, and the left shift induces $T_\rho : K \to K$ as follows
\[T_\rho(\pi_\rho(\omega))=\pi_\rho(\sigma\omega)).\]
For $\psi:\mathbb N\to(0,\infty)$, define
\[
 R(T_\rho,\psi):=\left\{x\in K: |T_\rho^nx-x|<\psi(n) \text{ for infinitely many }n\right\}.
\]

\begin{theorem}[Chang, Wu and Wu {\cite{ChangWuWu2019}}]
\label{thm:Chang-Wu-Wu-recurrence}
Let $g$ be a doubling dimension function such that $r^{-\gamma}g(r)$ is increasing as $r\to0$.  Then
\begin{equation}\label{eq:homogeneous-general-gauge}
 \mathcal H^g( R(T_\rho,\psi))=\begin{cases}
 0,&\displaystyle\sum_{n=1}^{\infty}\rho^{-\gamma n}g\!\left(\rho^n\psi(n)\right)<\infty,\\[3mm]
 \mathcal H^g(K),&\displaystyle\sum_{n=1}^{\infty}\rho^{-\gamma n}g\!\left(\rho^n\psi(n)\right)=\infty.
 \end{cases}
\end{equation}
\end{theorem}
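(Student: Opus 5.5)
The plan is to pass to the symbolic coding and reduce the recurrent set to a limsup of explicitly described sets, each a finite union of intervals of nearly uniform size. Since the strong separation condition holds, $\pi_\rho$ is a bi-Lipschitz-type bijection on cylinders: there is $c>0$ such that points in distinct level-$n$ cylinders are at distance at least $c\rho^n$.

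For $x=\pi_\rho(\omega)$, the iterate satisfies $T_\rho^n x=\pi_\rho(\sigma^n\omega)$. The condition $|T^n_\rho x-x|<\psi(n)$ with $\psi(n)$ small then forces $x$ and $T_\rho^nx$ to share a long common prefix. Concretely, for each word $\mathbf w\in\{1,\dots,L\}^n$ the self-map $\phi_{\mathbf w}$ has a unique fixed point $x_{\mathbf w}\in K_{\mathbf w}$. For $x\in K_{\mathbf w}$ we have $x=\phi_{\mathbf w}(T^n_\rho x)$, hence
\[|x-T_\rho^nx|=|\phi_{\mathbf w}(y)-y|,\qquad y=T^n_\rho x .\]
Since $\phi_{\mathbf w}(y)-y=(\rho^n-1)(y-x_{\mathbf w})$, the condition becomes $|y-x_{\mathbf w}|<\psi(n)/(1-\rho^n)$, that is,
\[|x-x_{\mathbf w}|<\rho^n\psi(n)/(1-\rho^n).\]
Thus
\[R_n(T_\rho,\psi)=\bigcup_{\mathbf w}B\bigl(x_{\mathbf w},\rho^n\psi(n)(1-\rho^n)^{-1}\bigr)\cap K,\]
a union of $L^n=\rho^{-\gamma n}$ balls of radius comparable to $\rho^n\psi(n)$, centred at points of $K$ and separated by at least $c\rho^n$. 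This exact linear structure is the key reduction.

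\textbf{Convergence.} Apply Lemma~\ref{lem:hausdorff-cantelli} with this cover. Since $g$ is doubling, the cost of level $n$ is $\asymp\rho^{-\gamma n}g(\rho^n\psi(n))$, so convergence of the series gives $\mathcal H^g(R)=0$. If $\psi(n)\not\to0$ along a subsequence, truncate $\psi$ at a small constant, which does not affect convergence.

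\textbf{Divergence.} Use the mass transference principle (Theorem~\ref{thm:mass-transference}) on $K$ with the $\gamma$-Ahlfors regular measure $m=\mathcal H^\gamma|_K$, applied to the balls $B_{n,\mathbf w}=B(x_{\mathbf w},r_n)$ with $r_n\asymp\rho^n\psi(n)$. The monotonicity of $r^{-\gamma}g(r)$ is exactly the hypothesis required there. It then suffices to show
\[m\bigl(B\cap\limsup B_{n,\mathbf w}^g\bigr)=m(B)\]
for every ball $B$, where $B^g_{n,\mathbf w}$ has radius $g(r_n)^{1/\gamma}$. If $g(r_n)^{1/\gamma}\ge c'\rho^n$ for infinitely many $n$, the enlarged balls cover $K$ at those levels and the claim is trivial. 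So assume $\rho^{-\gamma n}g(r_n)\to0$ after passing to the relevant subsequence. Then
\[E_n:=\bigcup_{\mathbf w}B^g_{n,\mathbf w}\]
satisfies $m(E_n)\asymp\rho^{-\gamma n}g(r_n)$, and $\sum_n m(E_n)=\infty$ by hypothesis.

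\textbf{Main obstacle: quasi-independence.} The main work is proving
\[m(E_n\cap E_k)\le C\,m(E_n)m(E_k)+(\text{error summable in the right sense})\]
for $k<n$. Then the Chung--Erd\H{o}s (Kochen--Stone) lemma gives $m(\limsup E_n)>0$ locally, and a standard density/ergodicity argument using self-similarity of $K$ upgrades this to full measure in every ball.

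To bound $m(E_n\cap E_k)$, fix a level-$k$ ball around $x_{\mathbf u}$ with $|\mathbf u|=k$. Among level-$n$ fixed points $x_{\mathbf w}$, I count those inside it. The fixed point $x_{\mathbf w}$ has coding $\mathbf w\mathbf w\mathbf w\cdots$, while $x_{\mathbf u}$ has coding $\mathbf u\mathbf u\cdots$. A level-$n$ ball centred near $x_{\mathbf u}$ at distance $\asymp\rho^j$ requires $\mathbf w$ to agree with $\mathbf u^\infty$ on its first $j$ symbols. This leaves $L^{n-j}$ choices, except that when $j>n$ the periodicity forces essentially a single $\mathbf w$.

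Summing over scales $j$ between $k$ and $n$ yields the product bound, up to an error term from the regime $j\ge n$. That error term involves pairs with $\mathbf w$ determined by $\mathbf u$, i.e.\ $n$ a multiple of the period of $\mathbf u$. I expect it to be controlled by $\sum_{k<n}m(E_n)\rho^{\gamma(n-k)}$-type bounds, which are dominated by $\sum_n m(E_n)$. Handling this periodic-word overlap carefully, and making the estimate uniform inside an arbitrary ball $B$ via cylinder rescaling, is the delicate step.
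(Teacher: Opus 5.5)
The survey quotes this theorem without proof, but your outline is correct and is the standard route to the cited result. You write $R_n(T_\rho,\psi)$ as $L^n$ separated balls of radius $\asymp\rho^n\psi(n)$ about the fixed points $x_{\mathbf w}$, use Hausdorff--Cantelli for convergence, and for divergence apply the mass transference principle on $(K,\mathcal H^\gamma|_K)$, fed by a Chung--Erd\H{o}s quasi-independence estimate and a density upgrade.

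The step you call delicate is actually easy. In your regime $j>n$, each $\mathbf u$ forces $\mathbf w=\mathbf u^\infty|_n$ outright, with no condition relating $n$ to the period of $\mathbf u$. So the crude count $m(E_k\cap E_n)\lesssim L^k g(r_n)\asymp\rho^{\gamma(n-k)}m(E_n)$ already gives exactly the error term you predicted. The same count inside a cylinder $K_{\mathbf v}$ with $k\ge|\mathbf v|$ just picks up a factor $\rho^{\gamma|\mathbf v|}$, which gives uniform local density. Use this density argument rather than ergodicity, since these limsup sets are not $T_\rho$-invariant.

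Finally, do not pass to a subsequence along which $\rho^{-\gamma n}g(r_n)\to0$; such a subsequence may not exist. Instead cap the enlarged radii at $\varepsilon\rho^n$, with $\varepsilon$ below the separation constant, which preserves divergence.
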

Theorem~\ref{thm:Chang-Wu-Wu-recurrence} gives 
\begin{equation*}\label{eq:homogeneous-recurrence-dimension}
   \dim_{\mathrm H}R(T_\rho,\psi)=\frac{\gamma}{1-\alpha(\psi)/\log\rho}.
\end{equation*}
where \(\alpha(\psi)\) is the lower exponential order defined
previously. If we take $g(r)=r^\gamma$, the conclusion becomes
a zero--full law for the finite measure $\mathcal H^\gamma|_K$,.

Baker and Farmer~\cite{BakerFarmer2021} extended this result on 
natural-dimensional Hausdorff measure to finite conformal iterated function
systems. Let $\Phi$ be a finite conformal iterated function system as in Section~3.1.3, with attractor $X$. If $\delta_\Phi=\dim_{\mathrm H}X,$
then
\[
   \sum_{n=1}^{\infty}\psi(n)^{\delta_\Phi}<\infty
   \quad\Longrightarrow\quad
   \mathcal H^{\delta_\Phi}\bigl(R(T_\Phi,\psi)\bigr)=0.
\]
Under the open set condition, if $\sum_{n=1}^{\infty}\psi(n)^{\delta_\Phi}=\infty$, then
\[
   \mathcal H^{\delta_\Phi}\bigl(R(T_\Phi,\psi)\bigr)
   =
   \mathcal H^{\delta_\Phi}(X).
\]
Neither this result nor that of \cite{ChangWuWu2019} requires $\psi$ to be monotone. In a forthcoming paper, Chang, Hussain, Li and Simmons \cite{changhausdorff} develop a Hausdorff theory of recurrence in a broad subshift setup, which is applicable to $\beta$-transformations, conformal repeller, and conformal IFS.

%\subsection{Irrational rotations}
%For the irrational rotation
%\[ T_\alpha x=x+\alpha\pmod1, \quad \alpha\notin\mathbb Q,\]
%the recurrence set has a particularly simple form.  Indeed,
%\[ d(T_\alpha^n x,x)=\|n\alpha\|\]
%is independent of $x$, and hence
%\[R_\alpha(\psi)=\begin{cases}\mathbb T,&\|n\alpha\|<\psi(n)  \ \text{for infinitely many }n,\\ \varnothing,&\text{otherwise}.
% \end{cases}\]
%Thus its Hausdorff dimension is either $1$ or $0$. Therefore fixed irrational rotations do not give a non-trivial Hausdorff-dimension theory for recurrence sets.

\subsection{Affine iterated function systems}

{Recurrent} sets for affine systems have been studied in both symbolic and geometric settings, according to whether returns are measured in the coding space or directly on the attractor.

We first consider projected symbolic {recurrent} sets. Retain the notation
$\Phi_{\boldsymbol t}{=\{f_i:i\in\mathcal{A}\}}$, $K_{\boldsymbol t}$, $\Sigma$, $\sigma$,
$\pi_{\boldsymbol t}$, {$\boldsymbol A$}, $A_{\mathbf i}$, $\varphi^s$, and ${P_{\boldsymbol A}}$ introduced in Section~3.3. Define %the square-pressure function by
\begin{equation*}\label{eq:self-affine-square-pressure}
 P_{{\boldsymbol A,2}}(s)=\lim_{n\to\infty} -\frac{1}{n} \log \sum_{\mathbf i\in\mathcal A^n} \varphi^s(A_{\mathbf i})^2.
\end{equation*}
The limit exists by the submultiplicativity of the singular-value
function. 

Let $\ell=(\ell_n)_{n\geq1}$ be a sequence of positive integers. B\'ar\'any and Troscheit \cite{BaranyTroscheit2021} studied the following sets
\begin{equation}\label{eq:self-affine-cylindrical-recurrence}
R(\Phi_{\boldsymbol t},\ell):=\pi_{\boldsymbol t}\left\{\boldsymbol\omega\in\Sigma:\sigma^n\boldsymbol\omega\in
 [\boldsymbol\omega|_{\ell_n}]\ \text{for infinitely many }n\right\},
\end{equation}
Thus, the target at time $n$ is determined by the initial $\ell_n$ symbols of the orbit itself. Set
\[
 \eta:=\liminf_{n\to\infty}\frac{\ell_n}{n}.
\]

\begin{theorem}[B\'ar\'any--Troscheit
\cite{BaranyTroscheit2021}]
\label{thm:barany-troscheit-recurrence}
Suppose that Condition \eqref{nquasi-multiplicativity condition} holds, $\max_{i\in\mathcal A}\sigma_1(A_i)<\frac{1}{2}$, and $\beta<1$. Let $s_0$ be the unique solution of
\begin{equation*}\label{eq:self-affine-recurrence-pressure}
 (1- \eta)P_{{\boldsymbol A}}(s_0) =  \eta P_{{\boldsymbol A,2}}(s_0).
\end{equation*}
Then, for Lebesgue-almost every $\boldsymbol t$,
\begin{equation*}\label{eq:barany-troscheit-recurrence}
 \dim_{\mathrm H}
 R(\Phi_{\boldsymbol t},\ell) = \min\{d,s_0\}.
\end{equation*}
Moreover, if $s_0>d$, then $\mathcal L^d \bigl(R(\Phi_{\boldsymbol t},\ell)\bigr)>0$ for Lebesgue-almost every $\boldsymbol t$.
\end{theorem}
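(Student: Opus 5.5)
The plan is to prove the two inequalities separately: a covering bound for \emph{every} $\boldsymbol t$, and a transversality/energy bound for Lebesgue-almost every $\boldsymbol t$. Reading the hypothesis ``$\beta<1$'' as $\eta<1$, I first check that $s_0$ is well defined. $P_{\boldsymbol A}$ is continuous and strictly decreasing, and $P_{\boldsymbol A,2}$ is positive and increasing. Hence $s\mapsto(1-\eta)P_{\boldsymbol A}(s)-\eta P_{\boldsymbol A,2}(s)$ is strictly decreasing, positive at $s=0$ and negative for large $s$, so it has a unique zero. Moreover $P_{\boldsymbol A}(s_0)\ge 0$.

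\textbf{Upper bound.} Fix $s>s_0$ and assume $s<d$; otherwise $\dimH R(\Phi_{\boldsymbol t},\ell)\le d$ is trivial. Write $R_n$ for the $n$-th level of \eqref{eq:self-affine-cylindrical-recurrence}. If $\ell_n\le n$, then $\boldsymbol\omega\in R_n$ exactly when $\boldsymbol\omega\in[\mathbf j\mathbf k\mathbf j]$ for some $\mathbf j\in\mathcal A^{\ell_n}$ and $\mathbf k\in\mathcal A^{n-\ell_n}$. The projection $\pi_{\boldsymbol t}[\mathbf j\mathbf k\mathbf j]$ lies in an ellipse with semi-axes $\asymp\sigma_i(A_{\mathbf j\mathbf k\mathbf j})$. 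Falconer's standard argument covers it by balls of radius $\sigma_{\lfloor s\rfloor+1}$ at total $s$-cost $\lesssim\varphi^s(A_{\mathbf j\mathbf k\mathbf j})\le\varphi^s(A_{\mathbf j})^2\varphi^s(A_{\mathbf k})$. Summing over $\mathbf j$ and $\mathbf k$ gives
\[
Z_n(s)\lesssim \exp\bigl(-\ell_nP_{\boldsymbol A,2}(s)+(n-\ell_n)P_{\boldsymbol A}(s)+o(n)\bigr).
\]
For large $n$ we have $\ell_n\ge(\eta-\varepsilon)n$. Since $(1-\eta)P_{\boldsymbol A}(s)<\eta P_{\boldsymbol A,2}(s)$ strictly, choosing $\varepsilon$ small makes $Z_n(s)$ decay exponentially. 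This bound needs care only in the range of $s$ where $P_{\boldsymbol A}(s)\ge0$; when $P_{\boldsymbol A}(s)<0$ the sum is even smaller. If instead $\ell_n>n$, the forced word $\boldsymbol\omega|_{n+\ell_n}$ is determined by a word of length $n$ together with a periodic continuation, and the same estimate only improves. The Hausdorff--Cantelli lemma (Lemma~\ref{lem:hausdorff-cantelli}) then gives $\dimH R(\Phi_{\boldsymbol t},\ell)\le s$, with no restriction on $\boldsymbol t$.

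\textbf{Lower bound.} Fix $s<\min\{s_0,d\}$. I would construct a probability measure $\mu$ on the symbolic recurrent set satisfying
\[
\iint\varphi^s\bigl(A_{\boldsymbol\omega\wedge\boldsymbol\tau}\bigr)^{-1}\,\td\mu\,\td\mu<\infty .
\]
By Falconer's transversality lemma, which uses $\max_i\sigma_1(A_i)<1/2$, this gives $\int I_s(\pi_{\boldsymbol t}\mu)\,\td\boldsymbol t<\infty$ over any bounded set of translations. Hence $I_s(\pi_{\boldsymbol t}\mu)<\infty$ for almost every $\boldsymbol t$, and the energy criterion yields $\dimH\ge s$.

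\textbf{Construction of $\mu$.} Choose a very sparse subsequence $n_k$ along which $\ell_{n_k}/n_k\to\eta$. The digits of $\boldsymbol\omega$ are built block by block.
\begin{itemize}
\item The first $\ell_{n_k}$ symbols of each stage form a prefix $\mathbf j$, chosen with weight proportional to $\varphi^s(A_{\mathbf j})^2$.
\item They are followed by a free word $\mathbf k$ of length $n_k-\ell_{n_k}$, chosen with weight proportional to $\varphi^s(A_{\mathbf k})$.
\item Finally the prefix is repeated, so that $\sigma^{n_k}\boldsymbol\omega\in[\boldsymbol\omega|_{\ell_{n_k}}]$ is forced.
\end{itemize}
Gluing is done with the connecting words of length $K_s$ supplied by \eqref{nquasi-multiplicativity condition}. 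This keeps $\varphi^s$ quasi-multiplicative along the construction at a bounded cost per stage, which is negligible because the stages are sparse. Inside long free stretches between stages I would use approximate Gibbs (Käenmäki-type) weights for $\varphi^s$. The mass of a forced block is then about $\exp(\ell P_{\boldsymbol A,2})\varphi^s(A_{\mathbf j})^2\cdot\exp(-(n-\ell)P_{\boldsymbol A})\varphi^s(A_{\mathbf k})$. Compared with $\varphi^s(A_{\mathbf j\mathbf k\mathbf j})$, the loss is $\exp\bigl(n[\eta P_{\boldsymbol A,2}(s)-(1-\eta)P_{\boldsymbol A}(s)]\bigr)$, which is exponentially favourable precisely because $s<s_0$.

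\textbf{Main obstacle.} The hardest step is the energy estimate at intermediate scales, where $\boldsymbol\omega\wedge\boldsymbol\tau$ ends inside a forced repeated block. There the mass of a cylinder does not factor nicely: the second copy of $\mathbf j$ is deterministic, so the measure is concentrated more than a Gibbs measure would be. I would first try to bound $\mu[\mathbf u]\lesssim\varphi^s(A_{\mathbf u})\,e^{-\epsilon|\mathbf u|}$ uniformly over all finite words $\mathbf u$, splitting into cases according to where $|\mathbf u|$ falls relative to the block boundaries. In the case ending inside the repeated copy, the required inequality is exactly the pressure inequality defining $s_0$, and the gluing constants must be absorbed into the $o(n)$ terms. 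Summing this bound over levels then gives the finite energy.

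Finally, if $s_0>d$, the same construction with $s=d$ gives a measure with $L^2$ density for almost every $\boldsymbol t$, which yields $\mathcal L^d\bigl(R(\Phi_{\boldsymbol t},\ell)\bigr)>0$.
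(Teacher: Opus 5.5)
The survey only quotes this theorem, so there is no proof in the paper to compare with. Judged on its own, your upper bound is sound: it is the natural Falconer cover of the cylinders $[\mathbf j\mathbf k\mathbf j]$, valid for every $\boldsymbol t$. One small error: $P_{\boldsymbol A,2}$ is not positive, since $P_{\boldsymbol A,2}(0)=-\log\#\mathcal A$. Uniqueness of $s_0$ still holds, and $P_{\boldsymbol A}(s_0)\ge 0$ still follows, from $P_{\boldsymbol A,2}\ge -2P_{\boldsymbol A}$.

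The lower bound has a genuine gap, in two places.

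\begin{itemize}
\item \emph{The construction does not land in $R(\Phi_{\boldsymbol t},\ell)$.} The word that must reappear at time $n_k$ is the global initial segment $\boldsymbol\omega|_{\ell_{n_k}}$, not a fresh prefix of the $k$-th stage. For sparse $n_k$, essentially the whole stretch up to position $\ell_{n_k}$ is the copied word, so it must carry the squared weights.
\item \emph{The first copy is not handled.} Your case analysis only treats scales ending in the deterministic second copy. Consider a scale ending after a prefix $\mathbf j'$ of length $m$ of the first, random copy. There the ratio $\mu[\mathbf u]/\varphi^s(A_{\mathbf u})$ is multiplied by about $\varphi^s(A_{\mathbf j'})\,e^{mP_{\boldsymbol A,2}(s)}\approx e^{m(\lambda_2-h_2)}$. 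Here $h_2$ and $\lambda_2$ are the entropy and the $\varphi^s$-exponent of the equilibrium state of $\mathbf j\mapsto\varphi^s(A_{\mathbf j})^2$, so that $P_{\boldsymbol A,2}(s)=2\lambda_2-h_2$. Nothing in $s<s_0$ forces $h_2\ge\lambda_2$. When this fails, the Frostman bound breaks at these scales and the $s$-energy is infinite.
\end{itemize}

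This cannot be repaired by better bookkeeping, because $s_0$ is only the critical exponent of the natural cover. One may instead cover $R_n\cap[\mathbf j]$ by the single cylinder $[\mathbf j]$ whenever that is cheaper. This gives the sum $\sum_{\mathbf j\in\mathcal A^{\ell_n}}\min\bigl\{\varphi^s(A_{\mathbf j}),\ \varphi^s(A_{\mathbf j})^2\sum_{\mathbf k\in\mathcal A^{n-\ell_n}}\varphi^s(A_{\mathbf k})\bigr\}$. Dually, any measure on the set must satisfy both Frostman constraints, on the first copy and on the second.

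A concrete example: take $d=1$, two similarities with ratios $0.01$ and $0.4$, and $\ell_n=\lfloor n/20\rfloor$. All hypotheses hold, reading ``$\beta<1$'' as $\eta<1$. Here $s_0\approx 0.2936$. Yet a computation over types shows that at $s=s_0$ the refined sum is about $e^{-cn}$ with $c\approx 5\cdot 10^{-4}$; the worst words use the ratio-$0.01$ map with frequency $\approx 0.148$. Hence $\dim_{\mathrm H}R(\Phi_{\boldsymbol t},\ell)<s_0=\min\{d,s_0\}$ for every $\boldsymbol t$.

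So the lower bound $\min\{d,s_0\}$, as quoted, fails in this generality, and your scheme cannot be completed without an extra hypothesis or a modified formula. A suitable extra hypothesis would be $h_2\ge\lambda_2$ at $s_0$, which holds for instance in the homogeneous case.
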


The recurrence condition in
\eqref{eq:self-affine-cylindrical-recurrence} is imposed in the symbolic space before projection. Consequently, Theorem
\ref{thm:barany-troscheit-recurrence} does not directly give the
dimension of {recurrent} sets defined using the distance on the attractor.

We next turn to geometric {recurrent} sets on the Bedford--McMullen carpet
$K$ introduced in Section~3.3. Recall that 
\[T(x,y)=(T_N(x),T_M(y))=\left(Nx~(\bmod{1}),My~(\bmod{1})\right),
 \quad M>N\geq2.\]
For a positive function $\psi:\mathbb N\to(0,\infty)$ and
$\gamma\geq0$, define
\begin{equation}\label{eq:wu-yuan-recurrence}
 R_\gamma(T,\psi): = \left\{ (x,y)\in K:
 \begin{array}{l}
 T_N^n x\in B(x,\psi(n)),\\
 T_M^n y\in B(y,\psi(n)^\gamma)
 \end{array}
 \text{ for infinitely many }n
 \right\},
\end{equation}
Let $\alpha $ be the lower exponential order at infinity of the function $\psi$.
Set
\[
 \tau_1:=\liminf_{n\to\infty}\frac{-\log_N\psi(n)}{n}=\frac{\alpha}{\log N},\quad
 \tau_2:=\liminf_{n\to\infty}\frac{-\log_M\bigl(\psi(n)^\gamma\bigr)}{n}=\frac{\gamma\alpha}{\log M}.
\]
%In particular, $\tau_2=\frac{\gamma\log N}{\log M}\,\tau_1.$

Wu and Yuan~\cite{WuYuan2025} obtained an exact piecewise formula for
$\dim_{\mathrm H}R_\gamma(T,\psi)$ when
\[
 \tau_1\geq0,
 \qquad
 0\leq\gamma\leq\log_N M,
\]
and every non-empty column of $K$ contains the same number of selected
rectangles. The latter condition is equivalent to $\dim_{\mathrm H}K=\dim_{\mathrm B}K.$ No monotonicity assumption on $\psi$ is required.

 For completeness, consider the usual case $\gamma=1$. Let $C$ be the number of non-empty columns and let $q$ be the common number of selected rectangles in each such column. If $\tau_1\ge 0$, then the formula of \cite{WuYuan2025} becomes 
\[
 \dim_{\mathrm H}R_1(T,\psi)=
 \begin{cases}
 \displaystyle
 \min\left\{\frac{\log_N C+\log_M q}{1+\tau_2},\frac{\log_N C}{1+\tau_1}+\log_M q
 \right\},
 & \log_N M>1+\tau_1,\\[4mm]
 \displaystyle
 \min\left\{\frac{\log_N C+\log_M q}{1+\tau_2},\frac{\log_N C+\log_N q}{1+\tau_1}
 \right\},
 & \log_N M\leq1+\tau_1.
 \end{cases}
\]
Here $R_1(T,\psi)$ is defined using the maximum norm. Its Hausdorff
dimension is unchanged if the maximum norm is replaced by any
equivalent norm.

For overlapping iterated function systems, the inverse dynamics may
be multivalued. Baker and Koivusalo \cite{BakerKoivusalo2024} defined
geometric recurrence by requiring that, for infinitely many $n$, at
least one inverse branch of length $n$ satisfies the return condition.
 %Let
%\[\mathcal S=\{S_i(x)=A_i x+t_i:i\in\mathcal A\}\]
%be a finite affine iterated function system on $\mathbb R^d$ with withnonsingular contracting linear parts and attractor $X$, and set
%\[ T_i=S_i^{-1},\quad\Lambda(\mathcal S)=\sum_{i\in\mathcal A}|\det A_i|.\]
For each $i\in\mathcal A$, let
\[
 T_{i}:=f_{i}^{-1}, \quad {D(\boldsymbol A)}:=\sum_{i\in\mathcal A}|\det A_i|.
\]
For a word $\mathbf i=(i_1,\ldots,i_n)\in\mathcal A^n$, write 
\[T_{\mathbf i}:=T_{i_n}\circ\cdots\circ T_{i_1}.\]
Given $h:\mathbb N\to [0,\infty),$ define
\begin{equation*}\label{eq:overlapping-IFS-recurrence}
 R(\Phi_{\boldsymbol t},h):=\left\{ x\in {K_{\boldsymbol t}}: T_{\mathbf i}x\in B\left(x,\left(h(n){D(\boldsymbol A)}^{-n}\right)^{1/d}\right) \text{ for some }\mathbf i\in\mathcal A^n \text{ and infinitely many }n \right\}.
\end{equation*}
Their general results mainly concern the Lebesgue measure of $  R(\Phi_{\boldsymbol t},h)$, together with its Hausdorff-dimension bounds in certain zero-measure cases. As a representative exact result, consider 
\[
 \Phi(\lambda)=\{S_0(x)=\lambda x,\ S_1(x)=\lambda x+1\}, \quad \frac{1}{2}<\lambda<1,
\]
whose attractor is
\[
 K(\lambda)=\left[0,\frac1{1-\lambda}\right].
\]
Suppose that $\lambda^{-1}$ is a Garsia number {(see Definition 1.1 in \cite{HarePanju2013})}, and $h$ is bounded, then, for any ball $B\subset  K(\lambda)$ and any $0<s\le1$,
\begin{equation*}\label{eq:baker-koivusalo-recurrence-measure}
 \mathcal H^s\bigl(B\cap R(\Phi(\lambda),h)\bigr)=
 \begin{cases}
0,&\displaystyle   \sum_{n=1}^{\infty} 2^{n(1-s)}h(n)^s<\infty,\\[3mm]
\mathcal H^s(B),&\displaystyle \sum_{n=1}^{\infty}2^{n(1-s)}h(n)^s=\infty.
 \end{cases}
\end{equation*}

\subsection{Matrix transformations}

\subsubsection{The one-dimensional case.}

Retain the notation $T_\beta$ and $\alpha=\alpha(\psi)$ introduced in
Section~3.4.1. For $\beta>1$ and a positive function $\psi:\mathbb N\to(0,\infty)$ with $\psi(n)\to0$, define
\[
R(T_\beta,\psi) :=\{x\in[0,1):T_\beta^n x\in B(x,\psi(n))  \text{ for infinitely many }n\}.
\]

\begin{theorem}[Tan--Wang \cite{TanWang2011}]
\label{thm:tan-wang-beta-recurrence}
%Let $\psi$ be a positive function defined on $\mathbb{N}$. 
For any $\beta > 1$,
\begin{equation}\label{eq:beta-recurrence-matrix}
 \dim_{\mathrm H}R(T_\beta,\psi) = \frac{\log\beta}{\alpha+\log\beta}.
\end{equation}
\end{theorem}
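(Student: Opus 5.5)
The upper and lower bounds will be proved separately, working throughout with the $\beta$-expansion and the cylinders $I_n(\omega)$ of order $n$ attached to admissible words $\omega\in\Sigma_\beta^n$. Two standard facts about $T_\beta$ will be used. First, the number of admissible words of length $n$ satisfies $\beta^n\le\#\Sigma_\beta^n\le \frac{\beta}{\beta-1}\beta^n$. Second, each cylinder has length at most $\beta^{-n}$, and \emph{full} cylinders, i.e.\ those with $T_\beta^n I_n(\omega)=[0,1)$, have length exactly $\beta^{-n}$. We may assume $\alpha<\infty$; if $\alpha=\infty$ the covering argument below gives dimension $0$ directly.

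\textbf{Upper bound.} Fix $n$ and an admissible word $\omega$ of length $n$. On $I_n(\omega)$ the map $T_\beta^n$ is $x\mapsto\beta^n x-c_\omega$. Hence the condition $|T_\beta^nx-x|<\psi(n)$ becomes $|(\beta^n-1)x-c_\omega|<\psi(n)$. This is an interval of length at most $2\psi(n)/(\beta^n-1)\asymp\psi(n)\beta^{-n}$. So $R_n(T_\beta,\psi)$ is covered by at most $C\beta^n$ intervals of length $\ll\psi(n)\beta^{-n}$. Take $s>\frac{\log\beta}{\alpha+\log\beta}$ and pick $\alpha'<\alpha$ still satisfying $s>\frac{\log\beta}{\alpha'+\log\beta}$. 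For large $n$ we have $\psi(n)\le e^{-n\alpha'}$, so
\[\sum_n\beta^n\bigl(\psi(n)\beta^{-n}\bigr)^s\le\sum_n e^{n(\log\beta-s(\alpha'+\log\beta))}<\infty.\]
Lemma~\ref{lem:hausdorff-cantelli} then gives the upper bound. No monotonicity of $\psi$ is needed here.

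\textbf{Lower bound.} Choose a subsequence $n_k$, growing super-exponentially, along which $-\log\psi(n_k)/n_k\to\alpha$. Build a Cantor subset $F\subset R(T_\beta,\psi)$ as follows.

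1. Fix $N$ large and let $\Sigma_N^*$ be the set of words of length $N$ whose cylinders are full. The number of such words is $\gtrsim\beta^{N}/N$, and any concatenation of such words is admissible with a full cylinder; this is the standard Tan--Wang device, which yields dimension $\log\beta$ up to $\varepsilon(N)$.

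2. Between $n_{k-1}$ and $n_k$ use free blocks from $\Sigma_N^*$. At time $n_k$, given a full cylinder $I_{n_k}(\omega)$, the recurrence condition carves out a subinterval of length $\asymp\psi(n_k)\beta^{-n_k}$ around the fixed point of the branch $x\mapsto\beta^{n_k}x-c_\omega$. Inside it, choose a full subcylinder of order $n_k+m_k$, where $m_k\approx n_k\alpha/\log\beta$ plus $O(N)$ padding to absorb boundary effects.

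3. Define a measure $\mu$ distributing mass uniformly over the free blocks and giving full mass to the forced block. For an interval of length $r$, estimate $\mu(B(x,r))\le r^{s-\varepsilon}$ with $s=\frac{\log\beta}{\alpha+\log\beta}$ by comparing $r$ with the cylinder lengths $\beta^{-n_k}$ and $\beta^{-(n_k+m_k)}$. The worst case is $r\approx\beta^{-(n_k+m_k)}$, where the mass is about $\beta^{-n_k}$; this gives exponent $\frac{n_k}{n_k+m_k}\to\frac{\log\beta}{\log\beta+\alpha}$. The fast growth of $n_k$ makes the earlier forced blocks negligible. Proposition~\ref{prop:mass-distribution} then finishes the proof.

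\textbf{Main obstacle.} The hardest step is step 2, ensuring that the shrunken recurrence interval actually contains full cylinders of the right order. The target centre is the fixed point of the branch, which depends on $x$; it may lie near the boundary of the cylinder, and non-full cylinders are scattered irregularly for non-Parry $\beta$. To handle this, I would restrict $\omega$ to words ending with a fixed suffix. This forces the fixed point $c_\omega/(\beta^n-1)$ to lie well inside a full cylinder of bounded order, so that the $\psi(n_k)\beta^{-n_k}$-neighbourhood contains a full cylinder of order $n_k+m_k+O(N)$. The losses are only a factor $e^{O(N)}$, which is harmless after letting $N\to\infty$.
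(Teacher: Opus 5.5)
The survey quotes this result from \cite{TanWang2011} without proof, so there is no in-paper argument to compare against. Your architecture is the standard one. The upper bound (one interval of length $\ll\psi(n)\beta^{-n}$ per admissible word, then Lemma~\ref{lem:hausdorff-cantelli}) is complete as written. The lower-bound scheme (concatenated full words, sparse times $n_k$, a forced block of length $m_k\approx-\log_\beta\psi(n_k)$, mass distribution with critical scale $\beta^{-(n_k+m_k)}$) is also correct.

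The gap is in the step you single out as the main obstacle: fixing a suffix of $\omega$ does not control where the fixed point sits. On a full cylinder $I_n(\omega)$ with left endpoint $x_\omega$, $T_\beta^n$ is the affine bijection $x\mapsto\beta^n(x-x_\omega)$ onto $[0,1)$, and the fixed point satisfies $x^*=x_\omega+\beta^{-n}x^*$. So the position of $x^*$ in $I_n(\omega)$, and the cylinders of order $n+m$ around it, are the images under the inverse branch of the position of $x^*$ in $[0,1)$ and of the order-$m$ cylinders around it. These are governed by the leading digits of $x^*$, i.e.\ by the \emph{prefix} of $\omega$ (when $x^*\in I_n(\omega)$, its expansion is $\omega\omega\omega\cdots$). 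A fixed suffix $u$ of length $L$ only tells you that $T_\beta^{n-L}x^*\in I_L(u)$. That says nothing about whether $B(x^*,\psi(n_k)\beta^{-n_k})$ contains a full cylinder of order $n_k+m_k+O(N)$.

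The obstacle disappears if you argue symbolically and copy the prefix instead. Take the forced block to be the continuation of $\omega^\infty$, i.e.\ the cylinder $I_{n_k+m_k}(\omega^\infty|_{n_k+m_k})$. Here $m_k\ge-\log_\beta\psi(n_k)$, increased by at most $N$ so that $\omega^\infty|_{m_k}$ ends at a position where the prefix of $\omega$ is full; such positions occur at least every $N$ digits in your construction. Since concatenations of full words are full, this cylinder is full with length $\beta^{-(n_k+m_k)}$. For every $x$ in it, the expansions of $x$ and of $T_\beta^{n_k}x$ both begin with $\omega^\infty|_{m_k}$, so $|T_\beta^{n_k}x-x|<\beta^{-m_k}\le\psi(n_k)$. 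No fixed point needs to be located, and the case $m_k>n_k$ is handled by the periodic repetition.

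Alternatively, you can keep your geometric picture. $T_\beta^{n_k}$ maps the recurrence interval inside $I_{n_k}(\omega)$ onto $[0,1)\cap B\bigl(x^*,\psi(n_k)/(1-\beta^{-n_k})\bigr)$, which contains an interval of length $\psi(n_k)$. Any interval of length at least $(m+3)\beta^{-m}$ contains $m+1$ consecutive cylinders of order $m$, one of which is full; this costs only $O(\log n_k)$ extra digits.

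With either replacement, the forced block is unique in each surviving cylinder of order $n_k$, and your mass estimate goes through as written.
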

No monotonicity assumption on $\psi$ is required.
Comparison with the result on shrinking target sets
of Shen and Wang {\color{brown}\cite{ShenWang2013}} in Section~3.4.1 gives, for every $y\in[0,1)$,
\[
 \dim_{\mathrm H}R(T_\beta,\psi)
 =
 \dim_{\mathrm H}W(T_\beta,y,\psi).
\]
Thus, for $\beta$-transformations, {recurrent} sets and shrinking
target sets have the same Hausdorff dimension.

\subsubsection{Expanding matrices in higher dimensions.}
We first consider diagonal real matrix transformations. Let $\boldsymbol\beta={\rm diag}\{\beta_1,\ldots,\beta_d\}$ with $1<|\beta_1|\le \cdots\le |\beta_d|$, and define
\[
% T_{\boldsymbol\beta} =T_{\beta_1}\times\cdots\times T_{\beta_d} \quad\text{on }[0,1)^d.
{T_{\boldsymbol\beta}\bfx=\boldsymbol\beta\bfx~(\bmod~1)}.
\]
For $\boldsymbol\psi=(\psi_1,\ldots,\psi_d)$ with $\psi_i:\mathbb N\to(0,\infty)$, set
{\[  R(\bfx,\boldsymbol\psi):=\{\bfy\in[0,1)^d:|y_i-x_i|<\psi_i(n)\text{ for }1\le i\le d\},\]
and
\[
R(T_{\boldsymbol\beta},\boldsymbol\psi) =\left\{\bfx\in[0,1)^d:   T_{\boldsymbol\beta}\bfx\in  R(\bfx,\boldsymbol\psi)\text{ for infinitely many }n \right\}.
\]}
{Put $\ell_i=\log|\beta_i|$, $1\le i\le d$. Recall that $\mathcal U(\boldsymbol\psi)$ denotes the set of accumulation points of
\[
 \left(
  \frac{-\log\psi_1(n)}{n},\ldots,\frac{-\log\psi_d(n)}{n}
 \right)_{n\ge1}.
\]
For $\boldsymbol t=(t_1,\ldots,t_d)\in\mathcal U(\Psi)$ and $1\le i\le d$,
\[
 \begin{split}
 \mathcal K_1(i)&=\{k:\ell_k>\ell_i+t_i\},\quad\mathcal K_2(i)=\{k:\ell_k+t_k\le\ell_i+t_i\},\\
 \mathcal K_3(i)&=\{1,\ldots,d\}\setminus\bigl(\mathcal K_1(i)\cup \mathcal K_2(i)\bigr),
 \end{split}
\]
and
\[
 \theta_i(\boldsymbol t) =\sum_{k\in\mathcal K_1(i)}1+\sum_{k\in \mathcal K_2(i)}\left(1-\frac{t_k}{\ell_i+t_i}\right)+\sum_{k\in \mathcal K_3(i)}\frac{\ell_k}{\ell_i+t_i}.
\]}

\begin{theorem}[He--Liao \cite{HeLiao2026}]
\label{thm:he-liao-diagonal-recurrence}
Suppose that each $\psi_i$ is positive and non-increasing and that
$\mathcal U(\boldsymbol\psi)$ is bounded. Then
\begin{equation}\label{eq:he-liao-recurrence}
 \dim_{\mathrm H}R(T_{\boldsymbol\beta},\boldsymbol\psi) = \sup_{\boldsymbol t\in\mathcal U(\boldsymbol\psi)} \min_{1\leq i\leq d}\theta_i(\boldsymbol t).
\end{equation}
\end{theorem}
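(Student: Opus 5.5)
The plan is to reduce the recurrence problem to the rectangular shrinking target problem of Theorem~\ref{thm:llvz-rectangular-targets}, in the form valid for all expanding diagonal matrices with non-increasing $\psi_i$. In the product structure $T_{\boldsymbol\beta}=T_{\beta_1}\times\cdots\times T_{\beta_d}$ the $n$-th level set $R_n$ splits into pieces inside each level-$n$ cylinder. Let $J_n(\mathbf w)$ be a level-$n$ cylinder, i.e.\ a product of one-dimensional $\beta_i$-cylinders of lengths at most $|\beta_i|^{-n}$. On $J_n(\mathbf w)$ the map $T^n_{\boldsymbol\beta}$ is affine with linear part $\boldsymbol\beta^n$. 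The condition $|T^n_{\beta_i}x_i-x_i|<\psi_i(n)$ then says that $x_i$ lies within about $\psi_i(n)|\beta_i|^{-n}$ of the unique fixed point of the $i$-th branch, since $\beta_i^nx_i-x_i$ has derivative $\beta_i^n-1\asymp\beta_i^n$. So, up to a factor in each radius close to $1$, $R_n\cap J_n(\mathbf w)$ is a rectangle centred at a point $\bfz_{\mathbf w}$ with $T^n_{\boldsymbol\beta}\bfz_{\mathbf w}=\bfz_{\mathbf w}$, with side lengths $\psi_i(n)|\beta_i|^{-n}$. The target set $W_n(T_{\boldsymbol\beta},\bfy,\boldsymbol\psi)\cap J_n(\mathbf w)$ is the analogous rectangle with the same side lengths but centre $T_{\boldsymbol\beta}^{-n}\bfy$ inside the cylinder.

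\textbf{Upper bound.} Since both families consist of one rectangle per cylinder with identical side lengths, I would reuse verbatim the covering argument behind the upper bound in Theorem~\ref{thm:llvz-rectangular-targets}. That argument uses only the sizes of the rectangles and the count and arrangement of cylinders, not where the centres sit. Concretely, for each $i$ cover the rectangles by balls of radius $|\beta_i|^{-n}\psi_i(n)$. Group together all rectangles whose cylinders merge at that scale in the directions $k\in\mathcal K_1(i)$. Then sum over the subsequence of $n$ along which $(-\log\psi_k(n)/n)_k$ approaches a given $\boldsymbol t$. This yields $\min_i\theta_i(\boldsymbol t)$ via Lemma~\ref{lem:hausdorff-cantelli}, and a uniform-in-$\boldsymbol t$ version follows from boundedness of $\mathcal U(\boldsymbol\psi)$. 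Incomplete (non-full) cylinders of $T_{\beta_i}$ only reduce the count, so they cause no harm here.

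\textbf{Lower bound.} Fix $\boldsymbol t\in\mathcal U(\boldsymbol\psi)$ and a sparse subsequence $n_k$ realising it. I would build a Cantor subset in which, at stage $k$, inside each surviving stage-$(k-1)$ rectangle we keep the full cylinders of level $n_k$ and then the recurrence rectangle inside each. The self-centred nature only moves each centre within its own cylinder. So the local mass estimates (mass distribution principle, Proposition~\ref{prop:mass-distribution}) for the rectangular shrinking target with varying centres transfer, provided the estimates there hold uniformly over the choice of one centre per cylinder. The main obstacle is the non-Markov structure of $T_{\beta_i}$ when $\beta_i$ is not an integer. Only a positive proportion of level-$n$ cylinders are full, and the fixed point of a non-full branch may fail to exist or may lie near the boundary. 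I would handle this by restricting to full cylinders, using the standard fact that full cylinders have counts comparable to $\beta_i^n$ up to polynomial losses and occur with bounded gaps. I would also shrink radii by a constant factor so that each rectangle sits well inside its cylinder. Monotonicity of $\psi_i$ is needed so that replacing $n_k$ by a nearby full-cylinder-compatible time costs nothing in the exponents.
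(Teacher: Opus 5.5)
The survey only quotes this theorem and gives no proof, so I judge your argument on its own. Your upper bound is sound. For a diagonal matrix, both $R_n(T_{\boldsymbol\beta},\boldsymbol\psi)$ and the shrinking-target level set are products of one-dimensional sets. Each factor is a union of at most $|\beta_i|^{n+o(n)}$ intervals of length $\lesssim\psi_i(n)|\beta_i|^{-n}$, at most one per cylinder. The covering by cubes of side $e^{-n(\ell_i+t_i)}$ uses nothing more, and this also works for $\beta_i<-1$.

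\textbf{The gap is in the lower bound, which rests entirely on full cylinders.} The fact you call standard is a theorem for $\beta_i>1$: full cylinders number $|\beta_i|^n$ up to polynomial losses, with gaps of at most about $n$ consecutive non-full cylinders (not bounded gaps). The statement explicitly includes $\beta_i<-1$, and there it can fail completely. Take $\beta=-1.2$, so $T_\beta x=1-1.2x$ on $(0,5/6)$ and $T_\beta x=2-1.2x$ on $(5/6,1)$.
\begin{itemize}
\item The set $A=[0,0.04]\cup[0.8,1)$ is forward invariant. So a full cylinder $J$ of level $n$ never has $T_\beta^kJ\subseteq A$ for $k\le n$.
\item Since $|T_\beta^kJ|=1.2^{k-n}$, a short check near $0.04$ and $0.8$ shows $T_\beta^kJ\subseteq(0.04,0.8)$ for all $k\le n-O(1)$.
\item On $(0.04,0.8)$ the map is the single affine branch $1-1.2x$.
\end{itemize}
Hence each level has only $O(1)$ full cylinders, all within $O(1.2^{-n})$ of the fixed point $1/2.2$. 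Your Cantor set therefore has dimension zero in that coordinate, whereas $\theta_i(\boldsymbol t)$ sees only $\ell_i=\log|\beta_i|$. You need a different source of well-spread periodic points. One option is cylinders whose $n$-th image contains a fixed interval in the support of the absolutely continuous invariant measure. Another is a measure-theoretic statement for the recurrence sets with respect to that measure, fed into a rectangles-to-rectangles mass transference principle \cite{WangWu2021}.

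Your explanation of the monotonicity hypothesis is also off. Your scheme never needs to shift times: for $\beta_i>1$ full cylinders exist at every level ($[0,\beta_i^{-n})$ is one). This matches the remark after the theorem that monotonicity can be dropped when all $\beta_i>1$. The hypothesis matters exactly in the regime where your construction breaks down.

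\textbf{Even for $\beta_i>1$, the decisive step is only asserted.} You transfer ``the local mass estimates'' from the shrinking-target lower bound ``provided the estimates there hold uniformly over the choice of one centre per cylinder''. You never check this, and it is the whole content of the lower bound. You have two ways to close it.
\begin{itemize}
\item Verify the ubiquity hypothesis of the rectangular mass transference principle for rectangles centred at the fixed points of $T^n_{\boldsymbol\beta}$ in full cylinders. This holds for $\beta_i>1$, since for all large $n$ the full cylinders of level $n$ fill a proportion of every ball bounded away from $0$.
\item Carry out the computation directly. With $n_{k-1}=o(n_k)$ one gets $\mu\bigl(B(\bfx,e^{-n_ku})\bigr)\le e^{-n_k\sum_j\phi_j(u)+o(n_k)}$, where $\phi_j(u)=u$, $\ell_j$ or $u-t_j$ according as $u\le\ell_j$, $\ell_j<u\le\ell_j+t_j$ or $u>\ell_j+t_j$. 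The function $u\mapsto u^{-1}\sum_j\phi_j(u)$ has the form $a+b/u$ between breakpoints. Its convex kinks occur only at $u=\ell_i+t_i$, and its value there is exactly $\theta_i(\boldsymbol t)$.
\end{itemize}

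A minor point: shrinking radii by a constant factor does not keep a rectangle inside its cylinder when the fixed point lies on the edge, as $0\in[0,\beta^{-n})$ does. Use a one-sided sub-rectangle instead.
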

\noindent If $\beta_i>1$ for all $i$, the monotonicity assumptions on the
functions $\psi_i$ may be omitted.

The theorem allows different {recurrent} rates in different coordinates and applies also when $\beta_i<-1$.  In particular, if $\psi_i(n)=\psi(n)$ for every $i$ and $\alpha=\alpha(\psi)<\infty$, then
\begin{equation}\label{eq:diagonal-recurrence}
 \dim_{\mathrm H}R(T_{\boldsymbol\beta},\boldsymbol\psi)=D_{\rm eig}(\alpha),
\end{equation}
where $D_{\rm eig}$ was defined in \eqref{eq:toral-eigenvalue-formula}. Thus, the {recurrent} sets and shrinking target sets for diagonal matrix
transformations have the same Hausdorff dimension under the
corresponding assumptions. Related results for hyperrectangular
targets centred at uniformly Lipschitz functions were obtained by Yuan
and Wang \cite{YuanWang2024}; {Recurrent} sets defined by multiplicative target conditions were also considered in \cite{HeLiao2026,YuanWang2024}.

We now turn to expanding integer matrices.  Let $A\in {GL_d}(\mathbb Z)$ be expanding, and order its eigenvalues so that 
\[1<|\lambda_1|\leq\cdots\leq|\lambda_d|.\]
Retain the notation
\[
 \ell_i=\log|\lambda_i|, \quad T_A \bfx=A \bfx~(\bmod~ 1).
\]
Let $\psi:\mathbb N\to(0,\infty)$ be decreasing, and let $\alpha=\alpha(\psi)$. 
Define
\[
 R(T_A,\psi) :=\{\bfx\in\mathbb T^d:T_A^n \bfx\in B(\bfx,\psi(n)) \text{ for infinitely many }n\}.
\]
For $n\ge1$, {rewrite
\begin{equation}\label{eq:recurrence-level}
 R_{n}(T_A,\psi): =\{\bfx\in\mathbb T^d:(A^n-I)\bfx~(\bmod{1})\in B({\bf 0},\psi(n))\}.
\end{equation} }
For all sufficiently large $n$, this is a union of $\lvert\det(A^n-I)\rvert\asymp \exp\left(n\sum_{i=1}^d\ell_i\right)$ translations of the ellipsoid $(A^n-I)^{-1}B({\bf 0},\psi(n))$. Their centres are the fixed points of $T_A^n$.

Let $A$ and $\psi$ be as above. Hu and Li \cite{HuLi2025} proved that, when
$\alpha\ge\ell_d-\ell_1$,
\begin{equation}\label{eq:expanding-recurrence}
 \dimH  R(T_A,\psi)  =D_{\rm cov}(\alpha),
\end{equation}
where $D_{\mathrm{cov}}$  was  { defined in \eqref{eq:toral-covering-bound}}

The matrix-sequence framework of Hu, Huang, Li and Wu
\cite{HuHuangLiWu2026}, reviewed in Section~3.4.2, applies to
recurrence by taking
\[
 M_n=A^n-I.
\]
There is no need to repeat the general construction here.  
In this formulation, the singular values of $(A^n-I)^{-1}$ determine the shapes of the recurrence
components, while  the successive minima of
\[
 (A^n-I)^{-1}\mathbb Z^d
\]
record the distribution of the fixed points of $T_A^n$.Their results
give $D_{\mathrm{eig}}(\alpha)$ as a general lower bound and a upper bound {associated with successive minima} that is no larger than $D_{\mathrm{cov}}(\alpha)$.
The same work yields exact recurrence formulae for expanding integer matrices diagonalizable over $\mathbb Q$ and for the rationally conjugate Jordan matrices considered in \eqref{eq:jordan-matrix-formula}.  In these cases, the Hausdorff dimensions of the {recurrent and} shrinking target  sets  agree. {To the best of our knowledge,} an exact formula for a general expanding integer matrix is not presently known.

\subsubsection{Hyperbolic integer matrices.}
Let $A\in {GL_2}(\mathbb Z)$, with eigenvalues
satisfying
\[
 0<|\lambda_1|<1<|\lambda_2|.
\]
Hu and Persson  \cite{HuPersson2024} first treated hyperbolic toral automorphisms and subsequently extended the result to nonsingular
$2\times2$ integer matrices with one contracting and one expanding
eigenvalue.

\begin{theorem}[Hu--Persson \cite{HuPersson2026}]
\label{thm:hu-persson-hyperbolic-recurrence}
Let $A$ satisfy the assumptions above. Let $\psi:\mathbb N\to(0,\infty)$ be decreasing. Then, for every $\alpha>0$,
\begin{equation}\label{eq:hyperbolic-recurrence}
 \dim_{\mathrm H}R(T_A,\psi) =
 \min\left\{ \frac{2\ell_2}{\alpha+\ell_2},
 \frac{\ell_2}{\alpha} \right\}.
\end{equation}
\end{theorem}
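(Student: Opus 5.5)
The plan is to reduce to the level sets $R_n(T_A,\psi)$ of \eqref{eq:recurrence-level} with $M_n=A^n-I$, bound the dimension above by two elementary covers, and bound it below by a Cantor construction that uses the arithmetic of the eigendirections. Since $A^n-I$ commutes with $A$, it has the same eigenvectors $v_1$ (stable) and $v_2$ (unstable), with eigenvalues $\lambda_1^n-1\to-1$ and $\lambda_2^n-1\asymp e^{n\ell_2}$. Hence
\[
R_n(T_A,\psi)=\bigcup_{\bfp}\Bigl(\bfp+(A^n-I)^{-1}B(\mathbf 0,\psi(n))\Bigr),
\]
where $\bfp$ runs over the $|\det(A^n-I)|\asymp e^{n\ell_2}$ fixed points of $T_A^n$. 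Each piece is, up to bounded distortion coming from the non-orthogonality of $v_1,v_2$, a parallelogram with side $\asymp\psi(n)$ along $v_1$ and side $\asymp\psi(n)e^{-n\ell_2}$ along $v_2$. Here $\alpha>0$ ensures $\psi(n)<1$, so the pieces do not wrap around the torus.

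\textbf{Upper bound.} I would apply the Hausdorff--Cantelli Lemma~\ref{lem:hausdorff-cantelli} with two covers.

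\emph{Small balls.} Cover each piece by $\asymp e^{n\ell_2}$ balls of radius $\psi(n)e^{-n\ell_2}$. This uses $\asymp e^{2n\ell_2}$ balls in total, and the series converges for $s>2\ell_2/(\alpha+\ell_2)$.

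\emph{Large balls.} Cover each piece by a single ball of radius $\asymp\psi(n)$. This uses $e^{n\ell_2}$ balls, and the series converges for $s>\ell_2/\alpha$.

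Because $\alpha$ is only a $\liminf$, I would run the argument along a subsequence realising $\alpha$, plus a standard $\varepsilon$-room argument using monotonicity of $\psi$. Concretely, I would group the $n$ into blocks on which $-\log\psi(n)\geq(\alpha-\varepsilon)n$, so that the series are summable. This gives the minimum in \eqref{eq:hyperbolic-recurrence} as an upper bound.

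\textbf{Lower bound.} This is the main obstacle. Choose a sparse sequence $n_k$ along which $-\log\psi(n_k)/n_k\to\alpha$. I would build a Cantor set $F=\bigcap_k F_k$, where $F_k$ consists of pieces of $R_{n_k}$ lying inside pieces of $F_{k-1}$, and put on $F$ a measure that splits mass evenly among children. The crucial input is the distribution of the fixed points, that is, the lattice $\Gamma_n=(A^n-I)^{-1}\Z^2$. Since $A$ is a hyperbolic integer matrix, the slopes of $v_1,v_2$ are quadratic irrationals, hence badly approximable. From this I expect to show that the successive minima of $\Gamma_n$ are both $\asymp e^{-n\ell_2/2}$, and more importantly that the strips through lattice points along $v_1$ are uniformly separated and equidistributed. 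Concretely, the claim to prove is that any ball of radius $r\in[\psi(n)e^{-n\ell_2},1]$ contains
\[
\asymp \max\bigl\{r^2e^{n\ell_2},\; r\,e^{n\ell_2}\psi(n)\bigr\}
\]
points of $R_n$-pieces' worth of mass, with the correct count, uniformly at that scale. This follows from a three-distance type estimate for the badly approximable direction.

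Given this counting statement, I would do the following.
\begin{enumerate}
\item Verify that each piece of $F_{k-1}$ (a thin parallelogram) contains the expected number $\asymp\psi(n_{k-1})\,\psi(n_{k-1})e^{-n_{k-1}\ell_2}\,e^{n_k\ell_2}$ of well-separated pieces of $R_{n_k}$.
\item Estimate $\mu(B(\bfx,r))$ separately in the three ranges of $r$:
\begin{itemize}
\item below the width $\psi(n_k)e^{-n_k\ell_2}$;
\item between the width and the length $\psi(n_k)$;
\item above the length, up to the scale of the previous level.
\end{itemize}
In each range I would use the counting statement.
\item Apply the mass distribution principle, Proposition~\ref{prop:mass-distribution}.
\end{enumerate}
The exponent obtained in the middle range is $2\ell_2/(\alpha+\ell_2)$, and the one in the outer range is $\ell_2/\alpha$, so the measure has local dimension at least the minimum of the two. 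Sparsity of $n_k$ makes the earlier levels negligible.

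I expect the lattice equidistribution at all intermediate scales, rather than the Cantor bookkeeping itself, to be where the real work lies. This is exactly the step that fails for general expanding matrices, and it is why the hyperbolic $2\times2$ case admits an exact formula.
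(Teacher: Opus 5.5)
The survey only quotes this theorem and gives no proof, so there is no argument in the paper to compare yours with. Judged on its own, your plan is sound and would prove the formula. The geometry of the pieces goes through, as do the two covers, the estimate $m_1(\Gamma_n)\asymp m_2(\Gamma_n)\asymp e^{-n\ell_2/2}$ (badly approximable eigenslopes plus Minkowski's second theorem), and the three-range Cantor/mass-distribution scheme. One step, however, is mis-stated in a way that matters, and its arithmetic input is left vague.

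\textbf{The counting claim.} As written, it is false at the bottom of its range. For $r=\psi(n)e^{-n\ell_2}$ the right-hand side is $\psi(n)^2\ll1$, yet a ball centred on a piece meets at least that piece. What is true, and what you need, is the upper bound
\[
\#\{\text{pieces of }R_n(T_A,\psi)\text{ meeting }B(\bfx,r)\}\ \lesssim\ 1+\max\{r^2,\,r\psi(n)\}\,e^{n\ell_2}.
\]
The ``$1$'' is exactly what produces the formula. In the middle range it gives $\mu(B(\bfx,r))\lesssim\mu_k\bigl(r/\psi(n_k)+r^2e^{n_k\ell_2}\bigr)$ with $\mu_k=e^{-n_k\ell_2+o(n_k)}$. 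The first term yields $2\ell_2/(\alpha+\ell_2)$ at $r=\psi(n_k)e^{-n_k\ell_2}$ and $\ell_2/\alpha$ at $r=\psi(n_k)$; the same term gives $\ell_2/\alpha$ in the outer range. Without it, the middle and outer ranges only give $\mu_kr^2e^{n_k\ell_2}\approx r^2$.

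\textbf{The arithmetic input.} To prove the bound, use the norm form. Write $p=a(p)v_1+b(p)v_2$. Since the eigenvalues are conjugate quadratic irrationals, $|a(p)b(p)|\ge c>0$ for all $p\in\Z^2\setminus\{0\}$. A parallelogram with sides $X$ along $v_1$ and $Y$ along $v_2$ pulls back under $A^n-I$ to an eigen-aligned box of size $\asymp X\times Ye^{n\ell_2}$. Cut this box into sub-boxes whose side-product is $<c$; each contains at most one integer point. Hence the parallelogram contains $\lesssim1+XYe^{n\ell_2}$ points of $\Gamma_n$.

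This gives both the separation of the pieces and the count in thin strips $\psi(n)\times r$ with $r<e^{-n\ell_2/2}$. The successive minima alone cannot do this. From $m_1(\Gamma_n)$ you only get $\lesssim1+\psi(n)e^{n\ell_2/2}$ pieces in such a strip. That certifies only the exponent $1+\ell_2/(2(\alpha+\ell_2))$, which is strictly below $2\ell_2/(\alpha+\ell_2)$ whenever $\alpha<\ell_2/2$.

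\textbf{The upper bound.} No subsequence is needed, and restricting to one would not be legitimate for a $\limsup$ set. By definition of $\liminf$, $\psi(n)\le e^{-(\alpha-\varepsilon)n}$ for all large $n$, which is all Lemma~\ref{lem:hausdorff-cantelli} requires. In fact nothing in your argument, the Cantor construction included, uses that $\psi$ is decreasing.

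An energy-type lower bound as in Section~\ref{subsec:measure-lower-bounds}, using normalised Lebesgue measure on $R_n$, would need the same lattice counts. It would also yield a large intersection property, whereas your more hands-on construction gives only the dimension.
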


Unlike the corresponding dimension formula for shrinking target sets in Section~3.4.3, \eqref{eq:hyperbolic-recurrence} is positive for any
$\alpha>0$ and depends only on the expanding eigenvalue $\lambda_2$.

\section{Shrinking targets versus recurrence}\label{sec:comparison}

This section compares the Hausdorff dimensions of {the} shrinking target sets and {recurrent} sets when the two problems are defined using the same
approximation function. We restrict attention to dimension results; for
quantitative counting laws, see \cite{LevesleyLiSimmonsVelani2025}.

\subsection{Two unifying frameworks}

The preceding results show that {the recurrent and shrinking target sets} often have the same Hausdorff dimension in expanding systems, but that this agreement is not universal. Two formulations help clarify the relation between the two problems.

Let $T:X\to X$ be a dynamical system on a metric space $(X,\bfd)$, and let
$\boldsymbol g=(g_n)_{n\geq1}$ be a sequence of Lipschitz maps from $X$ to
itself. Define
\begin{equation}\label{eq:twisted-recurrence}
 R^{\boldsymbol g}(\psi):= \left\{x\in X: \bfd(T^n x,g_n(x))<\psi(n) \ \text{for infinitely many }n \right\}.
\end{equation}
This formulation contains both shrinking target sets and  { recurrent} sets.  If $g_n(x)\equiv y$, then \eqref{eq:twisted-recurrence} is the shrinking target set $W(T,y,\psi)$, whereas $g_n=\operatorname{id}$ gives the usual { recurrent} set. %Kleinbock and Zheng \cite{KleinbockZheng2023} established dynamical Borel--Cantelli lemma for $ R_T^g(\psi)$ under suitable expansion, regularity and mixing assumptions. 
For a fixed piecewise Lipschitz map $g_n=g$, L{\"u}, Wang and Wu~\cite{LuWangWu2025} established a Lebesgue-measure zero--one law for $\beta$-transformations. Kleinbock and Zheng~\cite{KleinbockZheng2023} established a more general dynamical Borel--Cantelli lemma for $ R^g(\psi)$ under suitable assumptions.

Dimension results in this framework were obtained in several expanding dynamical systems. For the conformal setting, Shen~\cite{ShenModifiedSelfConformal2023}
obtained an explicit formula for the twisted recurrence set. Let
$\Phi$ be a finite conformal iterated function system satisfying the
open set condition, let $J$ be its attractor. For a Lipschitz map $g:J\to J$
and a positive function $\psi$,
\[
 \dimH R^g(\psi)= \begin{cases}
   s_\psi, & 0\leq\alpha(\psi)<\infty,\\
   0,      & \alpha(\psi)=\infty,
 \end{cases}
\]
where $s_\psi$ is the unique solution of
\[
 P_\Phi(s_\psi)-\alpha(\psi)s_\psi=0.
\]
In particular, taking $g\equiv y$ and $g=\operatorname{id}$, respectively, gives
\[
 \dimH W(T,y,\psi)=\dimH R(T,\psi).
\]
Thus, in this setting, the dimension is independent of the particular
Lipschitz target map.
Moreover, Wang \cite{WangModifiedBeta2018} treated $\beta$-transformations, and  Yuan and Li \cite{YuanLiLipschitz2023} studied expanding Markov maps. 
Higher-dimensional analogues for expanding matrix transformations and rectangular targets were obtained by Yuan and Wang \cite{YuanWang2024}. These results have been described and are not repeated here. 
The Hausdorff-measure theory was further developed by He \cite{HeHausdorffMeasure2025} for products of $\beta$-transformations, and  obtained zero--full laws for the Hausdorff $f$-measure.

For matrix transformations, Hu, Huang, Li and Wu \cite{HuHuangLiWu2026} obtained a different unification based on a sequence of invertible matrices $\cM=(M_n)_{n\geq1}\subset GL_d(\mathbb R)$.  Their general limsup set is
\[
 W(\cM,\psi,\bfy)=\left\{ \bfx\in[0,1)^d: M_n\bfx\pmod1\in B(\bfy,\psi(n)) \ \text{for infinitely many }n \right\}.
\]
Let $A$ be a non-singular integer matrix. For $T_Ax=Ax\pmod1$, the choices $M_n=A^n$ and $M_n=A^n-I$, with $y=0$ in the latter case, give shrinking target sets and { recurrent} sets, respectively.  Their dimension estimates for
such matrix-sequence limsup sets use the singular values of $M_n$ together with the successive minima of $M_n^{-1}\mathbb Z^d$.  The matrix-sequence formulation therefore places the two problems in a common linear framework; the detailed bounds were given in the preceding sections.

For an expanding integer matrix $A$, the sequences $A^n$ and
$A^n-I$ have the same exponential singular-value and determinant
growth. This explains why shrinking targets and recurrence have the
same Hausdorff dimension in several structured classes of expanding
matrices. Such an equality is not automatic, however, since the
distributions of the lattices $A^{-n}\mathbb Z^d$ and
$(A^n-I)^{-1}\mathbb Z^d$ may be different.

These two frameworks unify the studies of shrinking target sets and
 {recurrent} sets, but they do not by themselves imply that the corresponding Hausdorff dimensions are always equal.

\subsection{Comparison of Hausdorff dimensions}

Let $W$ and $R$ denote the corresponding shrinking target sets and
{recurrent} sets, respectively. 
For a positive function $\psi:\mathbb N\to(0,\infty)$, let $\alpha=\alpha(\psi)$ be its lower exponential order at infinity. Table~\ref{tab:comparison} records representative comparisons in which the shrinking rates and target formulations are matched. The precise hypotheses are given in Sections~\ref{sec:shrinking} and
\ref{sec:recurrence}.

\begin{table}[ht]
\centering
\begingroup
\small
\setlength{\tabcolsep}{4pt}
\renewcommand{\arraystretch}{1.15}

\begin{tabularx}{\linewidth}{
  @{}
  >{\raggedright\arraybackslash}p{0.20\linewidth}
  >{\raggedright\arraybackslash}p{0.45\linewidth}
  >{\raggedright\arraybackslash}X
  @{}
}
\toprule
System & Dimension results & Comparison\\
\midrule

$\beta$-transformations & For $\beta>1$,
$\displaystyle
\dimH W=\dimH R
=\frac{\log\beta}{\alpha+\log\beta}$, both dimensions are zero if \(\alpha=\infty\).
& 
\emph{Agreement} for the same approximation function.
\\

\addlinespace[2pt]
Expanding Markov maps & For finitely many branches and \(0\leq\alpha<\infty\), \(\dim_{\mathrm H}W=\dim_{\mathrm H}R=s_0,\)
where \(s_0\) is the unique zero of $P\!\left(- s\bigl(\log|T'|+\alpha\bigr)\right)$, and the corresponding pressure infimum
for countably many branches.
&
\emph{Agreement} under the stated hypotheses.
\\

\addlinespace[2pt]
Finite conformal IFSs
&
For $0\leq\alpha<\infty$,
$\dimH W=\dimH R=s_\psi$, $P_\Phi(s_\psi)-\alpha(\psi)s_\psi=0,$ both dimensions are zero if \(\alpha=\infty\).
&
\emph{Agreement} for the common time-dependent radius
$\psi(n)$, under the open set condition.
\\

\addlinespace[2pt]
Expanding diagonal transformations
&
%For matching rectangular rates \(\Psi=(\psi_1,\ldots,\psi_d)\),
Under the common hypotheses,
$\dimH W=\dimH R=D_{\rm eig}.$
% \sup_{\boldsymbol t\in\mathcal U(\Psi)} \min_{1\leq i\leq d}\vartheta_i(\boldsymbol t)$
 %under the common hypotheses.
&
\emph{Agreement} for the same  approximation functions.
\\

\addlinespace[2pt]
Expanding integer matrices
&
%\(\displaystyle D_{\rm eig}\leq\dimH W\leq D_{\rm lat}\), \par
%\(\displaystyle D_{\rm eig}\leq\dimH R\leq D_{\rm rec}\).
For \(\alpha>\ell_d-\ell_1\), 
$\dimH W=\dimH R=D_{\rm cov}.$
The dimensions also agree for matrices diagonalizable over
\(\mathbb Q\) and in the rationally conjugate Jordan cases.
&
\emph{Agreement} in these structured regimes; open in general.
\\

\addlinespace[2pt]
Irrational rotations
&
For \(\psi(n)=n^{-\gamma}\), \(\gamma\geq1\),
\(\dimH W=1/\gamma\), whereas \(\dimH R\in\{0,1\}\).
&
\emph{Possible disagreement} for \(\gamma>1\)
\\

\addlinespace[2pt]
Hyperbolic toral automorphisms
&
For \(0<\alpha<\ell_2\),
\(\dimH W=\dimH R=2\ell_2/(\alpha+\ell_2)\).
\par
For \(\alpha>\ell_2\),
\(\dimH W=0<\dimH R=\ell_2/\alpha\).
&
\emph{Agreement} below \(\ell_2\), but \emph{disagreement}
above it. The critical case is centre-dependent.
\\

\bottomrule
\end{tabularx}
\endgroup

\caption{Representative comparisons between shrinking target sets and
{recurrent} sets.}
\label{tab:comparison}
\end{table}

The affine results surveyed above do not presently yield a general
comparison based on matched target formulations. Koivusalo--Ram\'irez \cite{KoivusaloRamirez2018}
and Koivusalo--Liao--Morris--Rams\cite{KoivusaloLiaoRams2022} treat symbolic shrinking targets, whereas B\'ar\'any and Troscheit's shrinking target theorem  concerns prescribed cylinders and their recurrence theorem concerns returns to
the initial blocks of the orbit \cite{BaranyTroscheit2021}. These two results involve different pressure quantities and imply neither a general equality nor a general ordering. For fixed planar systems, B\'ar\'any- Rams\cite{BaranyRams2018} and Jordan--Koivusalo \cite{JordanKoivusalo2025} study shrinking target sets, while Wu--Yuan \cite{WuYuan2025} study geometric recurrence on a restricted class of Bedford--McMullen carpets. Baker and Koivusalo \cite{BakerKoivusalo2024} place shrinking target sets and {recurrent} sets in a common inverse-branch framework, but their general results are mainly measure-theoretic and do not provide matching exact dimension formulae.

For expanding integer matrices, the comparison is instead governed by
the relation between $A^n$ and $A^n-I$. Their exponential
singular-value growth agrees, which accounts for the equality obtained
in several structured cases, but the relevant lattices may have
different distributions. The general expanding case therefore remains
open. Irrational rotations and hyperbolic toral automorphisms show that, outside the uniformly expanding setting, the two dimensions may differ.

\section{Badly approximating orbits, non-recurrent sets and intersection with fractals}\label{badsec}
Let $(X,\mathbf{d})$ be a metric space and let $T:X\to X$. Given $y\in X$, we denote $\mathrm{BAD}(T,y)$ as the set of $x\in X$ whose orbits under $T$ badly approximate $y$, that is
\begin{equation*}
    \mathrm{BAD}(T,y):=\left\{x\in X: \liminf_{n\to\infty}\mathbf{d}(T^n(x),y)>0\right\}.
\end{equation*}
The associated non-recurrent set of $T$ is defined as
\begin{equation*}
    \mathrm{N}(T):=\left\{x\in X:\liminf_{n\to\infty}\mathbf{d}(T^n(x),x)>0\right\}.
\end{equation*}
When the underlying space is taken to be $X=[0,1)^d$, we shall write $\Bad_d(T,\bfy)$ and $\tN_d(T)$ instead. 
In this section, we aim at providing a concise overview of known developments on dimension theory of $\Bad(T,y)$ and $\tN(T)$, with a main focus on the two canonical cases where $T$ is either a piecewise $C^{1+\kappa}$ expanding interval map or a matrix transformation.  For outstanding results of other interesting and important dynamics not covered in the paper, such as expanding maps on manifolds or bounded flows for homogeneous dynamics, see \cite{MR3987817,MR3436158,MR3951832,MR4444693,MR1107012} and references therein.

\subsection{Matrix sequences}\label{matrseq} In this part, we consider the case $T=T_A$ where $A$ is a $d\times d$ integral matrix. As in the previous section, the sets of interest $\Bad_d(T_A,\bfy)$ and $\tN_d(T_A)$ are unified via
\begin{equation*}
    \Bad_{m,d}(\cM,\cY):=\left\{\bfx\in[0,1)^d:\liminf_{n\to\infty}\dist(T_{M_n}(\bfx)-\bfy_n,\Z^m)>0\right\},
\end{equation*}
where $\cM:=\{M_n\}_{n\in\mathbb{N}}$ with each $M_n$ a $m\times d$ real matrix and $\cY:=\{\bfy_n\}_{n\in\mathbb{N}}$ is a sequence of points in $[0,1)^m$. We would use the notation $\Bad_d(\cM,\cY)$ for the case where $m=d$.  When $A$ is integral, the set $\Bad_d(\cM,\cY)$ coincides with $\Bad_d(T_A,\bfy)$ by taking $M_n=A^n$ and $\bfy_n=\bfy$, and it equals $\tN_d(T_A)$ when $M_n=A^n-I$ and $\bfy_n=0$.

The  set $\Bad_{m,d}(\cM,\bfy)$ is related to the non-dense orbits of $\cM$ defined as 
\begin{equation*}
\ND_{m,d}(\cM)\ := \ \left\{\bfx\in[0,1)^d:\{M_n\bfx\ \mathrm{mod} \ 1\}_{n\in\N}\ \text{is not dense in}\ [0,1)^m \right\}.
\end{equation*}
Indeed, $\ND_{m,d}(\cM)$ can be expressed as a countable union of sets $\Bad_{m,d}(\cM,\bfy)$ over $\bfy\in\Q^m\cap[0,1)^m$. We would write $\ND_d(\cM)$ for the case where $m=d$.
The study of non-dense orbits associated with lacunary sequences has  a long history. Let us first discuss the one-dimensional setting. We say that a sequence $\cM=\{M_n\}_{n\in\N}$ of real numbers is lacunary if $\inf_{n\in\N} M_{n+1}/M_n>1$. In 1975, Erd\"{o}s \cite{MR412138} asked whether $\ND_1(\cM)\cap\Q^c\neq\varnothing$ for any lacunary sequence $\cM=\{M_n\}_{n\in\N}$ of integers. The problem was first completely resolved by Pollington \cite{MR540398},  who showed that $\Bad_1(\cM,0)$ has full Hausdorff dimension when $\cM$ is a lacunary sequence of real numbers (not necessarily integers). Regarding general sequence $\cY$, Broderick, Begeuad, Fishmen, Kleinbock and Weiss \cite{MR2644378} established the full Hausdorff dimension of $\Bad_1(\cM,\cY)$ when $\cM$ is a lacunary sequence and $\cY$ is a sequence in $[0,1)$.

Regarding the high dimensional case, Dani \cite{MR980795} has shown in 1988 that $\dim_H(\Bad_d(T_A,\bfy))=d$ whenever $A\in GL_d(\Z)$ is semisimple and $\bfy\in\Q^d\cap[0,1)^d$. The result was then generalized by Broderick, Fishman and Kleinbock \cite{MR2818688} to more general $\cM$ and $\cY$. To state their theorem accurately, we say that a sequence $\cM=\{M_n\}_{n\in\N}$ of $d\times d$ matrices is lacunary if their operator norms $\{\|M_n\|\}_{n\in\N}$ is lacunary, where 
\begin{equation*}
    \|A\|:=\max_{|\bfx|=1}|A\bfx|.
\end{equation*}
They showed that $\dimH(\Bad_{m,d}(\cM,\cY))=d$ whenever $\cM$ is a lacunary sequence of $m\times d$ matrices and $\cY$ is a sequence in $[0,1)^m$. By applying this result to $M_n=A^n$ or $M_n=A^n-I$, we can derive that $\Bad_d(T_A,\bfy)$ and $\tN_d(T_A)$ has full Hausdorff dimension if $\bfy\in[0,1)^d$ and $A$ is a $d\times d$ integral matrix with at least one eigenvalue whose absolutely value is greater than one.

 Although not directly related to the topic of dynamics in the present paper, it is natural to ask what the Hausdorff dimensio of $\Bad_{m,d}(\cM,\cY)$ is when $\cM$ is not lacunary. For one-dimensional setting, there is a complete result for sublacunary sequences. We say that a sequence $\cM$ of real numbers is sublacunary if $M_{n+1}/M_n\to 1$ and $M_n\to +\infty$ as $n\to\infty$. In 1994, Boshernitzan \cite{MR1293584} proved that $\dimH(\ND_1(\cM))=0$ whenever $\cM$ is a sublacunary sequence of positive integers\footnote{In the course of calculating $\dimH(\ND_1(\cM))$, Boshernitzan employed the notion of limit sets, which can be viewed as an early precursor to micro sets and weak tangents. These later two notions have seen substantial development and wide applicability in recent years (see \cite{MR3819712,MR4011524,MR2408385,MR3235463,MR4428883,MR2662522,MR4313141} and references therein).}. He also asserted, without proof, that a slight modification of his argument would give $\dimH(\Bad_1(\cM,\cY))=0$ for the same $\cM$ and any sequence $\cY$ in the unit interval. However, the formula of Hausdorff dimension of $\Bad_1(\cM,\cY)$ remains not known  when $\cM$ is neither lacunary nor sublacunary. At the most coarse level, we only know that $\dimH(\ND_1(\cM))$ has intermediate property \cite{berend2024}; that is to say that for any $s\in[0,1]$, there exists a sequence $\cM$ of positive real numbers with $M_n\to\infty$ ($n\to\infty$) such that $\dimH(\ND_1(\cM))=s$.

  In contrast to one dimensional setting, for a sequence $\cM$ of $d\times d$ real matrices, the Hausdorff dimension of $\Bad_d(\cM,\bfy)$ may depend on $\bfy$ even when $M_n=a_n I$ for a sublacunary sequence $\{a_n\}_{n\in\N}$ . The following simple example illustrates this point. In general, however, the precise mechanism by which  $\bfy\in[0,1)^d$ affects $\dimH(\Bad_d(\cM,\bfy))$ remains unknown.
 \begin{example}\label{exmaple}
     Let $d\geq2$ and let $M_n=nI$ for any $n\in\N$. Then $\Bad_d(\cM,0)=\varnothing$ and $\dimH(\Bad_d(\cM,\bfy))\geq d-1$ when $\bfy\neq 0$.
 \end{example}
 \begin{proof}
     By Dirichlet's theorem,  one has $$\liminf_{n\to\infty}\dist(n\bm{\alpha},\Z^d)=0,\qquad\forall \ \bm{\alpha}\in[0,1)^d.$$ This proves the first statement. If $\bfy=(y_1,y_2,\cdots,y_d)\in[0,1)^d\setminus\{0\}$, say $y_1\neq 0$, then $\Bad_d(\cM,\bfy)\supseteq\{0\}\times[0,1)^{d-1}$. This implies the second statement.
 \end{proof}
 We now consider the Hausdorff dimension of $\ND_d(\cM)$.  A high dimensional analogue of Boshernitzan's theorem is due to Baker \cite{MR2803785}, who showed that $\dimH (\ND_d(\cM))=d-1$ if $M_n=a_n I$ and $\{a_n\}_{n\in\N}$ is a sublacunary sequence of positive real numbers. 
  % In particular, this implies that the sets in Example \ref{exmaple} have Hausdorff dimension $d-1$ when $\bfy\neq0$. 
  The proof combines Boshernitzan's argument and the slicing theorem from geometric measure theory. However, this method relies heavily on the assumption that $M_n$ is a scalar matrix and does not appear to be readily adaptable to general diagonal matrices whose entries are sublacunary sequences of positive integers. To date, no result concerning Hausdorff dimension of $\ND_d(\cM)$ has been established when $\cM$ is neither lacunary nor diagonal type with sublacunary entries, not even an intermediate property analogous to that known in one-dimensional case. We conclude this subsection by stating the following problems that remain unsolved, at least not  formally, in the present literature.
 \begin{problem}
     For any integer $1\leq i\leq d$, let $\{a_{i,n}\}_{n\in\mathbb{N}}$ be a real sequence such that $a_{i,n}\to +\infty$ and $a_{i,n}/a_{i,n+1}\to 1$ as $n\to\infty$. Let $M_n:=\mathrm{diag}(a_{1,n},a_{2,n},\cdots, a_{d,n})$ for any $n\in\mathbb{N}$. Is it true that $\dim_H(\ND_d(\mathcal{M}))=d-1$\,?
 \end{problem}
 \begin{problem}
     For any $s\in[0,d]$, is there a sequence $\cM$ of $d\times d$ real matrices with $\|M_n\|\to\infty$ that satisfies $\dimH(\ND_d(\cM))=s$\,?
 \end{problem}

 \subsection{Piecewise expanding maps}\label{PEM} We discuss dimension theory of $\Bad_1(T,y)$ and $\tN_1(T)$ for piecewise $C^{1+\kappa}$ expanding maps on $[0,1)$ in this subsection. In contrast to integral matrix transformations mainly considered previously, the iterates of such maps are generally much more complicated and may not admit a convenient structure such as the Markov partition. We first introduce the standard definition of piecewise $C^{1+\kappa}$ expanding map.
 \begin{definition}\label{defofpiee}
   Given $\kappa>0$ and a subset $K\subseteq[0,1)$, we say that a map $T:[0,1)\to[0,1)$ is \emph{piecewise $C^{1+\kappa}$  expanding} on $K$ if it satisfies the following statements:
    \begin{itemize}
        \item[(i)] $T$ is \emph{piecewise $C^1$ monotonic}; that is to say that there exist a finite or infinite alphabet $\Sigma\subset\N$ and a collection of disjoint open sub-intervals $\{I_i\}_{i\in\Sigma}$ in $[0,1)$ satisfying that $$\overline{\bigcup_{i\in\Sigma}{I}_i}\ \supseteq  K$$ and that $T|_{K\cap I_i}$ cap be extended to a  monotonic and  $C^1$ map on  $I_i$ for any $i\in\Sigma$. 
        \vspace{1ex}

        \item[(ii)] there exist $\eta>1$ and $N_0\in\N$ such that $ |(T^{N_0})'(x)|>\eta$ for all $x\in\bigcup_{i\in\Sigma} I_i$ at which $T^{N_0}$ is well-defined and differentiable.\vspace{1ex}
     
        \item[(iii)] there exists $C>0$  such that
        \begin{eqnarray*}
        |T'(x)-T'(y)|\leq C|x-y|^{\kappa}
        \end{eqnarray*}
for any $i\in\Sigma$ and any $x,y\in I_i$.\vspace{1ex}
    \end{itemize}
 We say that \emph{$T$ has finite branches} if $\#\Sigma<+\infty$ whereas \emph{infinite branches} if $\#\Sigma=+\infty$. In addition, $T$ is a Markov map if for any $i,j\in\Sigma$ with $I_i\cap f(I_j)\neq\varnothing$ we have $I_i\subseteq f(I_j)$, and has full branches if $T(I_i)\supseteq(0,1)$ for any $i\in\Sigma$.
\end{definition}
\begin{remark}\label{listmap}
    Some typical interval maps that satisfy Definition \ref{defofpiee} are in order:
    \begin{itemize}
        \item[(i)] $\beta$ and negative $\beta$-transformation: $T_{\beta}(x):=\beta x \  (\mathrm{mod}\ 1)$, where $|\beta|>1$;\vspace{1ex}
        \item[(ii)] $(\alpha,\beta)$-transformation: $T_{\beta,\alpha}(x):=\beta x+\alpha\ (\mathrm{mod}\ 1)$, where $|\beta|>1$ and $\alpha\in[0,1)$;\vspace{1ex}
        \item[(iii)] Gauss map: $T_G(x):=\frac{1}{x}\ (\mathrm{mod}\ 1)$;\vspace{1ex}
        \item[(iv)] L\"{u}roth transformation: $T_L(x):=(i+1)(i+2)x-i-1$, $\forall \ x\in[1/(i+2),1/(i+1))$.
    \end{itemize}
\end{remark}

 In 1997, Dolgopyat \cite{MR1440064} studied sets of $x\in[0,1)$ whose orbits avoid prescribed subsets of $[0,1)$. To state the result specifically, given $Y\subseteq X$ and $x\in X$, let $\orb(T,x):=\{T^n(x):n\in\N\}$ and let
\begin{equation}\label{defofety}
    E(T,Y)\ := \ \left\{x\in X:\text{limit points of $\orb(T,x)$ do not intersect $Y$}\right\}.
\end{equation}
Dolgopyat proved that if $T$ is a piecewise $C^{1+\kappa}$ expanding map with finite branches on $[0,1)$ and $Y\subseteq[0,1)$ has Hausdorff dimension less than one, then $\dimH(E(T,Y))=1$. In particular, since $E(T,\{y\})= \Bad_1(T,y)$ for any $y\in[0,1)$, this result implies  that $\dimH(\Bad_1(T,y))=1$. His approach is highly heuristic in nature. At the most coarse level, it can be broken down into the following three steps:
\begin{itemize}
    \item[(i)] for any $\varepsilon>0$, find a bounded function $n:[0,1)\to\N$ and a $g$-invariant subset $K\subseteq[0,1)$ with Hausdorff dimension larger than $1-\varepsilon$ such that $g|_K:K\to K$ is a Markov map, where $g(x):= T^{n(x)}(x)$;\vspace{1ex}
    \item[(ii)] establish $\dimH(E(g|_K,Y))=\dimH(K)$ when $\dimH (Y)<\dimH(K)$;\vspace{1ex}
    \item[(iii)] prove that $\dimH(E(T,Y))=1$ whenever $\dimH(Y)<1$ by combining (i)-(ii) and a direct observation $E(T,Y)=E(g,Y_N)$, where $N=\max\{n(x):x\in[0,1)\}$ and $Y_N$ is the union of $T^{-i}(Y)$ over $i=0,1,...,N-1$.
\end{itemize}
Dolgopyat then employed a modified version of this strategy to obtain a corresponding result for Anosov flows. This approach, particularly the approximation of a general map by a Markov map as in (i), proves to be highly powerful. For example, the authors in \cite{LiLiaoVelaniZorin2023} adopted the technique to study the Hausdorff dimensions of Shrinking target sets associated with piecewise linear maps. We refer to the last two paragraphs of Section 1.2 in \cite{MR3951832} for more detailed discussions on Dolgopyat's argument.

At first glance, non-recurrent sets differ greatly from the sets discussed above, thus one might think  Dolgopyat's theorem cannot be applied to $\tN_1(T)$. However, Yuan, Li and Wu \cite{nayuan2021} bridge this gap in Section 4 of their paper,  by observing that $\tN_1(T)$ locally resembles $\Bad_1(T,y)$ and that enlarging local regions does not affect the Hausdorff dimension. Consequently, though not stated formally, their  arguments, together with Dolgopyat's result, indeed implies the full Hausdorff dimension of $\tN_1(T)$ for piecewise $C^{1+\kappa}$ expanding interval maps with finite branches. Moreover, they independently obtain the full dimension for some class of Markov maps with infinite branches, including Gauss map and L\"{u}roth map listed in Remark \ref{listmap}. In addition, they  established, via symbolic dynamics,  the full dimension of the set
\begin{equation*}
    \Bad_1(T,\cY)\ := \ \left\{x\in[0,1):\liminf_{n\to\infty}|T^nx-y_n|>0\right\}\qquad (\cY=\{y_n\}_{n\in\N}\subseteq[0,1))
\end{equation*}
which generalized results of \cite{liu2019,MR3794083,MR3957180}. One might initially identify $\Bad_1(T,\cY)$ with $E(T,Y)$ defined in \eqref{defofety} while taking $Y=\{y_n:n\in\N\}$. They differ, however, in two essential respects: (i) $\Bad_1(T,\cY)$ requires each $y_n$ to be avoided only by the corresponding iterate $T^n(x)$, whereas $E(T,Y)$ requires every $y\in Y$ to be avoided eventually by the entire orbit; (ii) the avoiding distance in $\Bad_1(T,\cY)$ is uniformly bounded away from zero, while for $E(T,Y)$ the distance may depend on the particular $y\in Y$. The upshot is that the full dimension of $\Bad_1(T,\cY)$ does not directly follow from that of $E(T,Y)$. Later, Yuan and Wang \cite{MR4848062} extended the setting of previous work {\cite{nayuan2021}} to repellers associated with some certain kind of Markov expanding maps, thereby allowing dynamics defined on some fractals such as cookie-cutters.

A common way to unify $\Bad(T,y)$, $\Bad(T,\cY)$ and $\tN(T)$ for general dynamics is to introduce a sequence of functions $\{g_n\}_{n\in\N}$. More precisely, for $\cG=\{g_{n}:X\to X\}_{n\in\N}$, define
\begin{equation*}
    \tN(T,\cG)\ := \ \left\{x\in X:\liminf_{n\to\infty}\mathbf{d}(T^n(x),g_n(x))>0\right\}
\end{equation*}
and we call $\tN(T,\cG)$ a twisted non-recurrent set with respect to $\cG$. As usual, we would write $\tN_d(T,\cG)$ instead when $X=[0,1)^d$. Thanks to recent progress, the dimension theory of $\tN(T,\cG)$ is rather complete for one-dimensional piecewise expanding maps. In particular, Lambert, Simmons and Zheng  \cite{lambert2025} have proved $\dimH(\tN_1(T,\cG))=1$ for $T=T_{\beta}$ ($\beta>1$) and $T=T_{G}$ with $\cG$ consisting of equicontinuous functions on $[0,1)$. Simultaneously and independently, Huang, Li, Wang and Yuan \cite{huang2025win} obtained the same result for $T$ being a piecewise $C^{1+\kappa}$ expanding map with finite branches or infinitely full branches and $\cG$ being a sequence of uniformly Lipshitz maps, that is
\begin{equation*}
    |g_n(x)-g_n(y)|\ \leq \ C|x-y|,\qquad\forall \ n\in\N,\quad\forall \ x,y\in[0,1)
\end{equation*}
for some constant $C>0$.

 \subsection{Winning theory and intersection with fractals} As seen above, for dynamics considered in the previous two subsections, the sets $\Bad(T,y)$ and $\tN(T)$ are of full Hausdorff dimension. In this part, we will see a stronger phenomenon:  they are much larger than merely being full dimensional and they intersect a class of fractals in sets of positive dimension. To begin with, we recall Schmidt's game as introduced in \cite{MR195595}. For other variants of games not mentioned in this section, such as rapid game, see \cite{hatefi202,hussainsimmon}. Let $\alpha,\gamma\in(0,1)$ and let $K\subseteq\R^d$ be a closed subset. The $(\alpha,\gamma)$-game on $K$ is defined as follows:  Two players, Alice and Bob, alternately pick closed balls whose centers lie in $K$, so that the chosen balls form a nested decreasing sequence
\begin{equation*}
    B_1\supseteq A_1\supseteq B_2\supseteq A_2\supseteq\cdots\supseteq B_k\supseteq A_k\supseteq\cdots
\end{equation*}
with  diameters
\begin{equation*}
    |A_k|=\alpha\cdot|B_k|,\quad|B_{k+1}|=\gamma\cdot|A_k|,\quad\forall \ k\in\N,
\end{equation*}
where $A_k$ and $B_k$ are the $k$-th moves of Alice and Bob respectively, and $|E|$ denotes the diameter of the set $E$.  A set $S\subseteq \R^d$ is said to be $(\alpha,\gamma)$-winning on $K$ if Alice possesses a strategy of choosing her balls that guarantees $$\bigcap_{k=1}^{\infty}B_k\subseteq S$$ no matter how Bob selects his balls. It is called $\alpha$-winning on $K$ if it is $(\alpha,\gamma)$-winning on $K$ for any $\gamma\in(0,1)$. We simply say that $S$ is winning on $K$ if it is $\alpha$-winning on $K$ for some $\alpha\in(0,1)$. Below we collect several notable properties of these $\alpha$-winning sets.
\begin{proposition}[{\cite{MR195595}}]\label{winnpro}
    Let $E$, $E_n$ be subsets of $\R^d$ for any $n\in\N$, let $K\subseteq\R^d$ be a closed subset with non-empty interior and let $\alpha\in(0,1)$.
    \begin{itemize}
        \item[(i)] If $E$ is $\alpha$-winning on $K$, then $\dimH E=d$.
 \medskip
 \item[(ii)] If $E$ is $\alpha$-winning on $K$, then it is $\alpha'$-winning on $K$ for any $0<\alpha'\leq\alpha$.
 \medskip

 \item[(iii)] If $E_n$ is $\alpha$-winning on $K$ for any $n\in\N$, then $\bigcap_{n=1}^{\infty}E_n$ is $\alpha$-winning on $K$.\medskip

 \item[(iv)] If $E\neq\R^d$ and $\alpha>1/2$, then $E$ is not $\alpha$-winning on $\R^d$.
    \end{itemize}
\end{proposition}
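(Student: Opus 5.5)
We prove the four items separately. Throughout, $K$ is closed with non-empty interior, and we play inside a closed ball $B_0\subseteq K$.

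\emph{Item (ii)} is a strategy translation. Suppose Alice has an $(\alpha,\gamma')$-winning strategy for every $\gamma'$, and fix $0<\alpha'\le\alpha$ and $\gamma\in(0,1)$. Alice plays the $(\alpha',\gamma)$-game by simulating an $(\alpha,\gamma')$-game with $\gamma':=\alpha'\gamma/\alpha$, which lies in $(0,1)$. When Bob plays $B_k$, Alice consults her $(\alpha,\gamma')$-strategy to get a ball $A_k'\subseteq B_k$ of radius $\alpha|B_k|$. She then plays any ball $A_k\subseteq A_k'$ with centre in $K$ and radius $\alpha'|B_k|$; the centre of $A_k'$ will do. Bob's next ball $B_{k+1}\subseteq A_k$ has radius $\gamma\alpha'|B_k|=\gamma'\alpha|B_k|$ and lies inside $A_k'$. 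It is therefore a legal move in the simulated game. The nested intersection is the same in both games, so it lies in $E$.

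\emph{Item (iii)} uses the classical interleaving trick. Fix $\gamma$ and split the times into disjoint infinite arithmetic-type subsequences, say indexed by $2^{n-1}(2m-1)$, one for each $E_n$. On the subsequence devoted to $E_n$, Alice regards the play as an $(\alpha,\gamma_n)$-game, where
\[
\gamma_n:=\gamma(\alpha\gamma)^{2^{n}-1}.
\]
This value accounts for the skipped rounds. Between consecutive moves of that subgame, Alice's intermediate moves together with Bob's moves form a single ``composite Bob move'' with the correct radius ratio. She responds using her $(\alpha,\gamma_n)$-winning strategy for $E_n$. Since the limit point is common to all the subgames, it lies in every $E_n$.

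The main subtlety here is that Alice's own interleaved moves must be treated as part of Bob's moves in each subgame, and the radius bookkeeping has to be exact. Checking this is routine once the spacing is chosen as above.

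\emph{Item (iv)}: take a point $z\notin E$. Bob's first ball is centred at $z$. Because $\alpha>1/2$, every Alice ball of radius $\alpha r$ inside a ball of radius $r$ contains the centre of that ball. Bob then keeps centring his balls at $z$, which is legal for small enough $\gamma$ (and in fact for any $\gamma$, since $z$ is in Alice's ball and Bob's ball can be centred there as long as it fits). Concretely, Bob recentres at $z$ whenever $z$ stays in the interior margin. The intersection is $\{z\}$, which is not contained in $E$.

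\emph{Item (i)} is the main obstacle. The plan is to fix a small $\gamma$ and let Bob play ``generically''. At each stage Bob has $N\asymp(\alpha\gamma)^{-d}$ disjoint admissible choices of radius $\gamma|A_k|$ inside Alice's ball $A_k$. Whatever Alice answers, at least a proportion of Bob's options are realisable. This gives a Cantor set inside $E\cap B_0$ with about $c(\alpha\gamma)^{-d}$ children per generation and contraction ratio $\alpha\gamma$.

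Put the natural uniform mass on this Cantor set. The mass distribution principle (Proposition~\ref{prop:mass-distribution}) then gives
\[
\dimH E\ge\frac{d\log(\alpha\gamma)^{-1}+\log c}{\log(\alpha\gamma)^{-1}}\longrightarrow d\quad\text{as }\gamma\to0.
\]
The care needed is that Alice's replies depend on Bob's branch. So the tree is built adaptively: one branch is built for each Bob move, with Alice's strategy applied along it. The required separation between sibling balls is maintained by choosing Bob's candidate balls pairwise at distance comparable to their radius.
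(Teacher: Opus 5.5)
Your proposal is correct and is essentially Schmidt's classical argument for all four items: strategy simulation with $\gamma'=\alpha'\gamma/\alpha$ for (ii), interleaving along the times $2^{n-1}(2m-1)$ with $\gamma_n=\gamma(\alpha\gamma)^{2^n-1}$ for (iii), Bob pinning a point for (iv), and a Bob-branching Cantor set plus the mass distribution principle for (i), which is all the paper relies on since it gives no proof and simply cites \cite{MR195595}. Two small corrections: in (iv) the parenthetical ``in fact for any $\gamma$'' is false, because Bob can recentre at $z$ against every Alice move only when $\gamma\le(2\alpha-1)/\alpha$ (for larger $\gamma$ Alice can push $z$ out, so $\R^d\setminus\{z\}$ is then $(\alpha,\gamma)$-winning), but one such $\gamma$ suffices since $\alpha$-winning quantifies over all $\gamma$; and in (i) Bob has $\asymp_d\gamma^{-d}$ (not uniformly $(\alpha\gamma)^{-d}$) choices per round, so your constant $c$ is of order $\alpha^d$, which is harmless because it does not depend on $\gamma$.
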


In many common dynamical systems, the sets $\Bad(T,y)$ and $\tN(T)$ are $\alpha$-winning. The first such result is due to Schmidt  \cite{MR195595}, who showed that $\Bad_1(T_G,0)$ is  $1/2$-winning. After this influential work, a rich literature concerning the winning property of $\BAD(T,y)$ has emerged. For one-dimensional dynamics, Tseng \cite{MR2480100} established that for any $C^2$ expanding self-map $T$ on the circle $S^1$, the set $\BAD(T,y)$ is $1/2$-winning. Beyond the smooth case, a collection of works \cite{MR2739401,MR2660561,MR3206688,MR3021798} have verified that for $T$ being a $\beta$-transformation, the Gauss map, or the L\"{u}roth transformation on the unit interval, the $1/2$-winning property of $\Bad_1(T,y)$ holds  for every $y\in[0,1)$. Nevertheless, one may observe that most of these approaches often employ fairly elaborate symbolic combinatorics, and their applicability is often confined to certain specific settings.

A recent advance is due to Hu, Li and Yu \cite{hu2025}, who have considerably extended the $\beta$-transformation setting by showing that for any piecewise $C^{1+\kappa}$ expanding map $T$ on $[0,1)$ with finitely many branches (see Definition \ref{defofpiee}), the set $\BAD_1(T,y)$ is $1/2$-winning for every $y\in[0,1)$. The result is substantially stronger, yet the underlying method is surprisingly simple. One key observation they rely on is that $T^n$ is monotone and $C^{1+\kappa}$ on an interval if it contains no endpoints of $n$-cylinders. For non-expanding systems, Duvall \cite{MR4157863} proved that if $T$ belongs to a certain class of nonuniformly expanding maps on $[0,1)$, then $\BAD_1(T,0)$ is $\alpha$-winning for some $\alpha\in(0,1)$. 

Concerning high dimensional systems, Dani \cite{MR980795} established that for any semisimple $A\in GL(d,\mathbb Z)$ and any $\mathbf y\in\mathbb Q^d$, the set $\Bad_d(T_A,\mathbf y)$ is $1/2$-winning. It was generalized by Broderick, Fishman and Kleinbock \cite{MR2818688} who covered all $\mathbf y\in[0,1)^d$ and all $A\in GL(d,\mathbb Z)$ for which at least one eigenvalue has modulus exceeding one. In fact, their work demonstrates that $\Bad_{m,d}(\mathcal M,\mathcal Y)$ is winning for every lacunary sequence $\mathcal M$ and every sequence of points $\mathcal Y$ in $[0,1)^m$. As a consequence, their framework also includes the non-recurrent sets $\tN_d(T_A)$. For analogous results on partially hyperbolic diffeomorphisms or homogeneous spaces, we refer the reader to \cite{MR4389795,MR3519428} and the references cited therein.

We are particularly interested in the Hausdorff dimension of $\tN_d(T,\cG)\cap K$ when $K\subseteq[0,1]^d$ is a fractal set. This motivates the winning theory on fractal sets.  As shown in Proposition~\ref{winnpro} (i), whenever the ambient space has nonempty interior, winning sets are dimensionally large. This naturally leads one to conjecture that if $K\subseteq\mathbb R^d$ is closed with positive Hausdorff dimension and $E\subseteq K$ is winning on $K$, then $E$ itself should inherit a substantial dimension. The following proposition offers a partial affirmative answer to this conjecture. To state it with precision, we introduce the notion of lower dimension. For a set $E\subseteq\mathbb R^d$, its lower dimension is defined by
\[
\dimL E := \sup \left\{ s\ge 0 \;\middle|\;
\begin{aligned}
&\exists\, c>0 \text{ such that for all } \mathbf x\in E\ \text{and}\ \varepsilon,\rho\in(0,1),\\
&\text{it holds that}\ N_{\varepsilon\rho}\bigl(B(\mathbf x,\rho)\cap E\bigr) \ge c\,\varepsilon^{-s}
\end{aligned}
\right\},
\]
where $N_r(E)$ denotes the least number of sets of diameter at most $r$ required to cover $E$.

\begin{proposition}[{\cite[Proposition 2.5]{MR3904182}}]\label{dimhlowdima}
For every closed set $K\subseteq\mathbb R^d$, any winning set $E\subseteq K$ on $K$ satisfies $\dimH E \ge \dimL K.$
\end{proposition}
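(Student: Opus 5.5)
We construct, inside $E$, a Cantor set carrying a measure of controlled local dimension, and then apply the mass distribution principle (Proposition~\ref{prop:mass-distribution}). Fix $\alpha\in(0,1)$ such that $E$ is $\alpha$-winning on $K$, and fix $s<\dimL K$. If $\dimL K=0$ there is nothing to prove, so assume $s>0$. By the definition of lower dimension, choose $s'\in(s,\dimL K)$ and $c>0$ such that $N_{\varepsilon\rho}(B(\bfx,\rho)\cap K)\ge c\varepsilon^{-s'}$ for all $\bfx\in K$ and $\varepsilon,\rho\in(0,1)$. Take $\gamma\in(0,1)$ very small; it will be fixed at the end. Alice then has a winning strategy for the $(\alpha,\gamma)$-game on $K$.

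The core step is to exploit Bob's freedom to build a large tree of plays. Suppose Alice has just played a ball $A_k=B(\bfx,\alpha r)$ with centre in $K$. Take a maximal $(3\gamma\alpha r)$-separated subset of $K\cap B(\bfx,\alpha r/2)$. Comparing with the covering numbers, it has at least $c'\gamma^{-s'}$ points, with $c'$ depending only on $c$, $\alpha$, $d$ and the packing constants of $\R^d$. Around each such point, Bob may play a ball of radius $\gamma\alpha r$ inside $A_k$, and these balls are pairwise disjoint, indeed separated by a gap of order $\gamma\alpha r$. Alice responds to each of them according to her strategy.

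Iterating this branching yields a tree of nested balls. Its limit set $F$ is contained in $E$, because every branch of the tree is a legitimate play in which Alice follows her winning strategy. Put $\mu$ equal to the uniform splitting measure on $F$: each ball at stage $k$ distributes its mass equally among its $\ge c'\gamma^{-s'}$ children.

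The measure estimate is as follows. Balls at stage $k$ have radius $r_k=r_0(\alpha\gamma)^k$ and mass at most $(c'\gamma^{-s'})^{-k}$. Given a small $r$, choose $k$ with $r_{k+1}\le r<r_k$. Because the children at each stage are separated by distances of order $r_{k+1}$, a ball $B(\bfy,r)$ meets at most a bounded number $C_d$ of stage-$(k+1)$ balls. Hence
\[
\mu(B(\bfy,r))\le C_d\,(c'\gamma^{-s'})^{-(k+1)}.
\]
This gives a lower local dimension of at least
\[
\frac{\log(c'\gamma^{-s'})}{-\log(\alpha\gamma)}=\frac{s'\log(1/\gamma)+\log c'}{\log(1/\gamma)+\log(1/\alpha)}.
\]
This quantity tends to $s'$ as $\gamma\to0$, so for $\gamma$ small enough it exceeds $s$. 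The mass distribution principle then gives $\dimH E\ge\dimH F\ge s$, and letting $s\uparrow\dimL K$ proves the proposition.

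The main obstacle is the packing step. We need the $\gamma^{-s'}$ children to be genuinely legal Bob moves: their centres must lie in $K$, each child must be contained in Alice's ball, and the children must be separated, even though Alice's ball may sit near the boundary of $K$. Restricting the centres to the half-radius ball $B(\bfx,\alpha r/2)$ and using separation $3\gamma\alpha r$ handles containment and disjointness. The lower-dimension hypothesis is applied at centre $\bfx\in K$ with scale $\rho=\alpha r/2$, and this is exactly the uniform-in-location estimate that lower dimension provides. One must also check that the constant $c'$ is independent of $\gamma$, so that it is absorbed in the limit $\gamma\to0$.
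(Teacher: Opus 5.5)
Your proposal is correct. It is the standard argument for this result, which the survey only cites from \cite{MR3904182}: Bob branches over a $3\gamma\alpha r$-separated subset of $K\cap B(\bfx,\alpha r/2)$, Alice's winning strategy is followed along every branch, and the mass distribution principle is applied to the resulting Cantor set. For the step you flagged, this subset has at least $N_{6\gamma\alpha r}\bigl(K\cap B(\bfx,\alpha r/2)\bigr)\ge c\,(12\gamma)^{-s'}$ points once $\gamma<1/12$, so $c'=c\,12^{-s'}$ is indeed independent of $\gamma$.

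One intermediate claim is misstated, though harmlessly. For $r_{k+1}\le r<r_k$, the ball $B(\bfy,r)$ is not bounded by a $\gamma$-independent number $C_d$ of stage-$(k+1)$ balls, since it can meet essentially all children of a single stage-$k$ ball. Count the disjoint stage-$k$ balls of radius $r_k>r$ instead: at most $3^d$ of them meet $B(\bfy,r)$. This gives $\mu(B(\bfy,r))\le 3^d(c'\gamma^{-s'})^{-k}$ with $r\ge r_0(\alpha\gamma)^{k+1}$, which yields the same exponent $\log(c'\gamma^{-s'})/\log(1/(\alpha\gamma))$. Equivalently, any constant depending on $\gamma$ but not on $k$ is absorbed as $k\to\infty$.
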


A suitable measure supported on $K$ facilitates playing Schmidt's game. A Borel probability measure $\mu$ on $\mathbb R^d$ is called absolutely decaying if one can find constants $C>0$, $r_0>0$ and $s>0$ with the following property: for every $\mathbf x\in\operatorname{supp}(\mu)$,  $0<r\le r_0$,  $0<\gamma\le 1$, and affine hyperplane $\mathcal L$ of codimension one, the following  inequality holds
\[
\mu\bigl(B(\mathbf x,r)\cap(\mathcal L)_{\gamma r}\bigr)
\le C\,\gamma^s\,\mu(B(\mathbf x,r)).
\]
In the particular case $d=1$, $\mu$ is absolutely decaying if the same estimate is satisfied with $(\cL)_{\gamma r}$ replaced by $B(y,\gamma r)$ for every $y\in\R$. Under this decay condition, it has been shown in \cite{MR2644378,MR2818688} that $\Bad_d(\cM,\cY)$ is winning on $K$ when $K=\supp(\mu)$ and $\mu$ is absolutely decaying on $[0,1)^d$. Their proof was carried out by playing Schmidt's game directly on $K$. A more efficient approach, however, is to employ the hyperplane absolute game, which is a variant of Schmidt's game first introduced in \cite{MR2981929} as a  generalization of McMullen's absolute game \cite{MR2720230}.

 Let $\gamma\in(0,1/3)$ and let $K\subset\mathbb R^d$ be closed. The $\gamma$-hyperplane absolute game on $K$ is defined as follows. Bob starts the game by selecting an arbitrary closed ball $B_1$ with centre in $K$. At each $i$-move, after Bob has chosen $B_i$, Alice chooses a set $A_i=(\mathcal L_i)_{\varepsilon_i}$, where $\mathcal L_i$ is a affine hyperplane of codimension one in $\mathbb R^d$ and $0<\varepsilon_i\le \gamma |B_i|/2$. Bob then replies with a closed ball $B_{i+1}$ centred in $K$ such that
\[
B_{i+1}\subseteq B_i\setminus A_i,\qquad |B_{i+1}|\ge \gamma |B_i|.
\]
A subset $S\subseteq\mathbb R^d$ is called $\gamma$-hyperplane absolute winning on $K$ if Alice possesses a strategy forcing $\bigcap_{i=1}^\infty B_i\cap S\ne\varnothing$, no matter how Bob chooses his balls. The set $S$ is said to be hyperplane absolute winning on $K$ if there is $\gamma_0>0$ such that $S$ is $\gamma$-hyperplane absolute winning on $K$ for every $0<\gamma\le\gamma_0$. In dimension $d=1$, this notion reduces to the absolute winning property introduced by McMullen \cite{MR2720230}. 

 Before stating the properties of the hyperplane absolute game, we need to introduce an auxiliary notion: hyperplane diffuse sets. A set $E\subseteq\mathbb R^d$ is called $\gamma$-hyperplane diffuse if there exists $r_0>0$ with the following property: for every $\mathbf x\in E$ and every affine hyperplane $\mathcal L$ of codimension one,
\[
E\cap \bigl(B(\mathbf x,r)\setminus(\mathcal L)_{\gamma r}\bigr)\ \neq\ \varnothing,\qquad\forall \ 0<r\leq r_0.
\]
   We simply say that $E$ is hyperplane diffuse if it is $\gamma$-hyperplane diffuse for some $\gamma>0$. For example, $[0,1]^d$ is hyperplane diffuse whereas any lower dimensional submanifold of $\R^d$ is not. In general, the support of an absolutely decaying measure is hyperplane diffuse (see the Proposition 5.1 of \cite{MR2981929} for a proof). We are now ready to collect some fundamental properties of hyperplane absolute winning sets and hyperplane diffuse sets.
  \begin{proposition}[{\cite{MR2981929}}]\label{proofhypdi}
      Let $K\subseteq L$ be two closed subsets in $\R^d$ and let $E\subseteq\R^d$. We obtain that
      \begin{itemize}
          \item[(i)] if $K$ is hyperplane diffuse and $E$ is hyperplane absolute winning on $K$, then $E$ is winning on $K$.
          \medskip

          \item[(ii)] if both $K$ and $L$ are hyperplane diffuse and $E$ is hyperplane absolute winning on $L$, then $E$ is hyperplane absolute winning on $K$.
          \medskip

          \item[(iii)] if there exists $\gamma_0$ such that for any $n\in\N$, $E_n$ is $\gamma$-hyperplane absolute winning on $K$ for any $0<\gamma\leq\gamma_0$, then $\bigcap_{n=1}^{\infty}E_n$ is $\gamma$-hyperplane absolute winning on $K$ for any $0<\gamma\leq\gamma_0$.
      \end{itemize}
  \end{proposition}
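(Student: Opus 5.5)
The three parts are handled separately, each by a strategy-transfer argument: Alice plays one game and, in parallel, simulates an auxiliary game in which she has a known winning strategy.

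\emph{Part (iii).} We use the fact that in the $\gamma$-hyperplane absolute game Alice may delete a single neighbourhood $(\mathcal L_i)_{\varepsilon_i}$ at each move, and we interleave the strategies for the sets $E_n$. Partition $\N$ into infinitely many infinite sets $N_1,N_2,\ldots$. On the moves indexed by $N_n$, Alice follows her winning strategy for $E_n$, applied to the subsequence of Bob's balls $B_i$, $i\in N_n$. The issue is that this subsequence does not shrink by the ratio $\gamma$ prescribed for that game; consecutive balls may shrink much faster. This is the main obstacle. It is resolved by the standard monotonicity trick: a $\gamma$-winning strategy remains valid when Bob is allowed to shrink more, provided Alice is allowed to delete smaller neighbourhoods. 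Concretely, if Bob skips moves, Alice treats the skipped rounds as rounds in which she "deleted" tiny neighbourhoods disjoint from the current ball. The hypothesis that $E_n$ is winning for every $\gamma\le\gamma_0$ is exactly what makes this legitimate, since the effective ratio between consecutive balls in $N_n$ is smaller than $\gamma_0$. The limit point $\bigcap B_i$ is then the outcome of each simulated game, so it lies in $\bigcap_n E_n$.

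\emph{Part (ii).} Alice plays the game on $K$ by simulating the game on $L$. Every ball $B_i$ Bob chooses with centre in $K$ is also a legal ball centred in $L$, since $K\subseteq L$. Alice answers with the neighbourhood $(\mathcal L_i)_{\varepsilon_i}$ dictated by her strategy on $L$. Bob's reply $B_{i+1}$, centred in $K$, is legal in the $L$-game as well. The only concern is that Bob could be left with no legal move, i.e. $B_i\setminus A_i$ contains no ball of radius $\gamma|B_i|$ centred in $K$. This is ruled out by the hyperplane diffuseness of $K$, provided $\gamma$ is small compared with the diffuseness constant; that is why $\gamma_0$ is shrunk. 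Diffuseness of $L$ is only needed so that the winning hypothesis on $L$ is non-vacuous.

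\emph{Part (i).} Given $\gamma$-hyperplane absolute winning of $E$ on $K$, we must produce $\alpha$ such that Alice wins the $(\alpha,\beta)$ Schmidt game for every $\beta$. Take $\alpha$ small in terms of the diffuseness constant of $K$. After Bob chooses $B_k$, Alice looks at the neighbourhood her absolute strategy wants to delete. Using diffuseness, she picks a ball $A_k\subseteq B_k$ of radius $\alpha|B_k|$, centred in $K$, that avoids this neighbourhood. She then treats the next ball in her absolute game as $A_k$, possibly after grouping several Schmidt rounds into one absolute round so that the radius ratios fall within $[\gamma,\ldots]$. Since every absolute-game position is legal, the intersection point lies in $E$.
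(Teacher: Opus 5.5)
The survey gives no proof of this proposition; it cites \cite{MR2981929}. Your overall plan, in which Alice simulates an auxiliary game where she already has a winning strategy, is the standard one. Part (ii) is fine as you argue it: the game on $K$ is the game on $L$ with Bob's options restricted, and diffuseness of $K$ (with $\gamma$ small) only keeps Bob supplied with legal moves. The gap is in how you handle the central difficulty in (iii) and (i), namely that Bob shrinks faster than the simulated game permits.

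In (iii), inserting fictitious passes into the simulated $E_n$-game is not legitimate. The winning strategy for $E_n$ prescribes a definite deletion at every round of its game, so a play in which some of those deletions are replaced by passes is not played according to that strategy, and nothing forces the outcome into $E_n$. Your monotonicity claim is also backwards: a $\gamma$-winning strategy need not survive Bob being allowed to shrink faster, since that makes the game harder for Alice. What you actually need is that each $N_n$ has bounded gaps, at most $k_n$ say (e.g.\ $N_n=\{2^n(2m-1):m\in\N\}$, $n\ge0$). Then for consecutive $i<i'$ in $N_n$ you have $B_{i'}\subseteq B_{i+1}\subseteq B_i\setminus A_i$ and $|B_{i'}|\ge\gamma^{k_n}|B_i|$. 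So $(B_i)_{i\in N_n}$ is a legal play for Bob in the $\gamma^{k_n}$-game on $K$, and Alice follows her $\gamma^{k_n}$-winning strategy for $E_n$. Its deletions satisfy $\varepsilon\le\gamma^{k_n}|B_i|/2\le\gamma|B_i|/2$, so they are legal in the real game. This is exactly where the hypothesis ``for every $\gamma\le\gamma_0$'' enters. For an arbitrary partition into infinite sets, the ratios along $N_n$ have no positive lower bound and no single strategy applies.

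You also treat $\bigcap_iB_i$ as a point in (iii). In the absolute game the radii need not tend to $0$, and a ball meeting every $E_n$ need not meet $\bigcap_nE_n$. To fix this, reserve one further bounded-gap class of rounds in which Alice deletes a thin slab around a hyperplane through the centre of Bob's ball; this forces the radius at least to halve.

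In (i), grouping Schmidt rounds cannot repair the ratios. The absolute game imposes the lower bound $|B_{i+1}|\ge\gamma|B_i|$, and grouping only makes ratios smaller. Each real round shrinks by $\alpha\beta$, with Bob's Schmidt parameter $\beta\in(0,1)$ arbitrary, so a single fixed $\gamma$ fails once $\alpha\beta<\gamma$. Moreover, a slab of half-width up to $\gamma r_k$ can be dodged via diffuseness only if $\gamma$ is small compared with the diffuseness constant $\delta$ of $K$.

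The repair is as follows. Fix $\alpha<\delta/4$. For each $\beta$, use Alice's $\gamma$-strategy with $\gamma\le\min\{\alpha\beta,\delta/4,\gamma_0\}$; it exists because $E$ is $\gamma$-hyperplane absolute winning for every small $\gamma$. Play it on Bob's balls $B_k=B(\bfx_k,r_k)$, starting at the first $k$ with $r_k\le 2r_0$ and playing arbitrarily before that. Diffuseness gives $\bfy\in K\cap B(\bfx_k,r_k/2)$ with $\dist(\bfy,\mathcal L_k)\ge\delta r_k/2>(\alpha+\gamma)r_k$, so $A_k=B(\bfy,\alpha r_k)\subseteq B_k\setminus(\mathcal L_k)_{\varepsilon_k}$. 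Then $B_{k+1}\subseteq A_k$ and $|B_{k+1}|=\alpha\beta|B_k|\ge\gamma|B_k|$, so $B_{k+1}$ is a legal reply in the absolute game. Since Schmidt radii tend to $0$, the outcome is a single point of $E$.
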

 \noindent The Proposition \ref{proofhypdi} (i) and (ii) illustrate the power of the hyperplane absolute game: the property of being hyperplane absolute winning on the full space transfers automatically to a wide class of hyperplane diffuse subsets. By combining this with Proposition \ref{dimhlowdima}, we have the following direct consequence. 
 \begin{corollary}\label{Corhypdif}
     If $E$ is hyperplane absolute winning on $[0,1]^d$ and $K$ is a hyperplane diffuse closed set in $[0,1]^d$, then $\dimH(E\cap K)\geq\dimL(K)$.
 \end{corollary}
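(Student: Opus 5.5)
The plan is to chain the three ingredients already recorded. The first step is to transfer winning from $[0,1]^d$ to $K$, and then to the subset $E\cap K$. Since $[0,1]^d$ is hyperplane diffuse and $K\subseteq[0,1]^d$ is a hyperplane diffuse closed set, Proposition~\ref{proofhypdi}(ii), applied with $L=[0,1]^d$, shows that $E$ is hyperplane absolute winning on $K$. Proposition~\ref{proofhypdi}(i), using again that $K$ is hyperplane diffuse, then shows that $E$ is winning on $K$, i.e.\ $\alpha$-winning on $K$ for some $\alpha\in(0,1)$.

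The second step is to pass to the intersection $E\cap K$. In Schmidt's game on $K$ every ball chosen has its centre in $K$. Since $K$ is closed and the radii shrink to zero, the single outcome point $\bigcap_k B_k$ is a limit of points of $K$, so it lies in $K$. Any winning strategy for Alice forcing $\bigcap_k B_k\subseteq E$ therefore forces $\bigcap_k B_k\subseteq E\cap K$. Consequently $E\cap K\subseteq K$ is itself $\alpha$-winning on $K$. This small observation is needed because Proposition~\ref{dimhlowdima} is formulated for winning sets contained in $K$.

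The final step is to apply Proposition~\ref{dimhlowdima} to the set $E\cap K\subseteq K$, which gives $\dimH(E\cap K)\ge\dimL K$.

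The only point requiring care is the containment argument in the second step, together with the implicit check that the diffuseness hypothesis on $[0,1]^d$ holds. The latter is noted in the text, since any ball centred in the cube meets the cube in a set that is not contained in a thin hyperplane neighbourhood. I expect no genuine obstacle beyond this, since the corollary is a formal combination of the quoted propositions.
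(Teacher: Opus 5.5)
Your proposal is correct and follows the paper's route: apply Proposition~\ref{proofhypdi}(ii) with $L=[0,1]^d$, then Proposition~\ref{proofhypdi}(i), and finally Proposition~\ref{dimhlowdima}. You also check that the outcome point lies in the closed set $K$, so that $E\cap K$ is itself winning on $K$; this detail is correct and the paper leaves it implicit.
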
 

 There are two usual ways to improve the above inequality replacing $\dimL$ by $\dimH$ : (i) prove that $\dimL(K)=\dimH(K)$; (ii) find a sequence $\{K_n\}_{n\in\N}$ of hyperplane diffuse subsets of $K$ such that $\dimL(K_n)=\dimH(K_n)\to\dimH(K)$ as $n\to\infty$. We intend to present some applications of these two principles mainly for the case where $K$ belongs to some classical class of fractals, such as  self-conformal sets and self-affine sets.
 
  Let $K$ be a self-conformal set in $\R^d$ generated by $C^{1+\kappa}$ conformal IFS that satisfies the open set condition (OSC).  It is well known that the restricted Hausdorff measure $\mu:=\cH^{s}|_{K}$, where $s:=\dimH(K)$, is $s$-Ahlfors regular. This implies that $\dimL(K)=\dimH(K)$. Moreover, if in addition that $K$ is irreducible, that is $K$ is not contained in a $C^1$ submanifold of $\R^d$ with codimension one, then the Theorem 1.6 of \cite{MR4283274} implies that the measure $\mu$ is also absolutely decaying, and hence $K$ is hyperplane diffuse. By combining these information with Corollary \ref{Corhypdif}, we have $\dimH(E\cap K)=\dimH(K)$ if $E$ is hyperplane absolute winning on $[0,1]^d$ and $K$ is a self-conformal set in $[0,1]^d$ that satisfies the above assumptions. After that, it is natural to ask whether the same equality is valid if the open set condition is dropped. The affirmative answer is obtained in a recent paper of Huang, Li, Wang and Yuan \cite{huang2025win}, where the key step is to approximate, in the sense of Hausdorff dimension, an irreducible self-conformal set by a sequence of hyperplane diffuse self-conformal sets, each satisfying the strong separation condition.

  Compared with self-conformal sets, little was known about self-affine sets, except for Bara\'{n}ski sponge. We first introduce some usual notions. A self-affine set $K\subseteq[0,1]^d$ generated by a diagonal self-affine IFS
  $$\Phi=\{\varphi_i(\bfx)=(\varphi_{i,1}(x_1),\varphi_{i,2}(x_2),...,\varphi_{i,d}(x_d))\}_{i\in\cA}$$
  is called a self-affine sponge. It is said to be a Barański sponge if for each $j=1,2,...,d$, the IFS $\{\varphi_{i,j}\}_{i\in\cA}$ on $\R$ satisfies the open set condition. If, moreover, all contractions share the same coordinatewise contraction rates, it is called a Sierpiński sponge. We say that $\Phi$ has distinguishable coordinates if for any distinct $j,k\in\{1,2,...,d\}$ there exists $i\in\cA$ such that $|\varphi_{i,j}'|\neq|\varphi_{i,k}'|$. For a self-affine IFS $\Phi=\{\varphi_i\}_{i\in\cA}$ and a probability vector $\bfp=(p_i)_{i\in\cA}$, we denote $\mu_{\bfp}$ as the unique probability measure that satisfies
  \begin{equation*}
      \mu_{\bfp}\ = \ \sum_{i\in\cA}p_i\cdot\mu_{\bfp}\circ\varphi_i^{-1}.
  \end{equation*}
  The dynamical dimension of the IFS $\Phi$ is then defined as
  \begin{equation*}
      \dimD(\Phi)\ := \ \sup\{\dim(\mu_{\bfp}):\bfp\ \text{is a probability vector}\},
  \end{equation*}
 where $\dim(\mu)$ is the infimum of Hausdorff dimensions of Borel sets that have full $\mu$-measures. By definition, the dynamical dimension of $\Phi$ is always a lower bound of the Hausdorff dimension of $K$.

 In \cite{MR3904182}, the authors  introduced the concept of irreducibility for diagonal self-affine IFS (distinct from the corresponding notion in the self-conformal setting discussed earlier)  and proved that for any Bara\'{n}ski sponge  $K\subseteq[0,1]^d$ whose associated IFS $\Phi$ is irreducible with distinguishable coordinates and for any hyperplane absolute winning set $E\subseteq[0,1]^d$, one has $\dimH(E\cap K)\geq\dimD(\Phi)$.  We note that although  their main theorem was stated only for intersections with
 \begin{equation*}
   \BAD_d:=\left\{\bfx\in[0,1]^d:\exists\, c(\bfx)>0\text{\ such that}\ \left|\bfx-\frac{\bfp}{q}\right|>\frac{c(\bfx)}{q^{1+1/d}}\,,\ \forall\ (\bfp,q)\in\Z^d\times\N\right\},
\end{equation*}
their arguments actually established results concerning intersections with hyperplane absolute winning sets.
 They achieved this by finding, for any probability vector $\bfp$, a sequence $\{K_n\}_{n\in\N}$ of self-affine subsets of $K$ which are hyperplane diffuse such that
 \begin{equation*}
     \dimL(K_n) \to \dim(\mu_{\bfp})\qquad(n\to\infty).
 \end{equation*}
 In the particular case where $K$ is a Siepi\'{n}ski sponge whose underlying IFS $\Phi$ is irreducible with distinguishable coordinates, the Hausdorff dimension of $K$ is equal to the dynamical dimension of $\Phi$ \cite{MR1389626}, and hence $K\cap E$ has full Hausdorff dimension in $E$.  Recently, Anttila and Fraser \cite{anttila2026} obtained a similar results for intersections with a class of non-linear and non-conformal carpets. To date, the following problem posed in \cite{MR3904182} remains open.
 \begin{problem}
     Let $K\subseteq[0,1]^d$ be a Bara\'{n}ski sponge whose associated IFS $\Phi$ is irreducible with distinguishable coordinates. Suppose that $\dimD(\Phi)<\dimH(K)$. Is it still true that $\dimH(E\cap K)=\dimH(K)$ for any hyperplane absolute winning set $E\subseteq[0,1]^d$?
 \end{problem}

The power of the hyperplane absolute winning property in intersection theory has been illustrated as above. To conclude this subsection, we give several examples of systems for which $\tN(T,\cG)$ is hyperplane absolute winning. Indeed, the sets $\tN_1(T,\cG)$ from \cite{huang2025win,lambert2025}, also described in the last paragraph of Section~\ref{PEM}, are hyperplane absolute winning. Furthermore, Huang, Li, Wang and Yuan \cite{huang2025win} also studied a class of product systems in high dimensions. More precisely, let $T_1:[0,1)\to[0,1)$ be a piecewise $C^{1+\kappa}$ expanding map with finite branches or infinitely full branches, and let $T_2:[0,1)^{d-1}\to[0,1)^{d-1}$ be arbitrary. Define $T:[0,1)^d\to[0,1)^d$ by
\[
T(\mathbf x):=(T_1(x_1),\,T_2(\mathbf x_2)),\qquad \mathbf x=(x_1,\mathbf x_2)\in[0,1)\times[0,1)^{d-1}.
\]
They proved that $\tN_d(T,\cG)$ is hyperplane absolute winning whenever $\cG=\{g_n\}_{n\in\mathbb N}$ is a sequence of uniformly Lipschitz functions on $[0,1)^d$. This result is applicable to $d$-dimensional matrix transformations $T_A:[0,1)^d\to[0,1)^d$ whose underlying matrices are of the form
\begin{equation*}
    A=\left(\begin{aligned}
        &\beta & \\
        & & B
    \end{aligned}\right)\qquad\text{where $|\beta|>1$ and $B$ is any $(d-1)\times (d-1)$ real matrix}.
\end{equation*}
However, the following  problem for general real matrix transformations remains unsolved.

\begin{problem}
    Let $A$ be a $d\times d$ real matrix with at least one eigenvalue whose absolute value is greater than $1$. Let $\cG=\{g_n\}_{n\in\N}$ be a sequence of uniformly Lipschitz functions on $[0,1)^d$. Is it true that $\dimH(\tN_d(T_A,\cG))=d$? Is $\tN_d(T_A,\cG)$ hyperplane absolute winning on $[0,1)^d$?
\end{problem}
 
\subsection*{Acknowledgements} 
We would like to thank Wei-Wen Kuang and Bo Wang for their useful comments on this survey.
BL was supported by National Key R\&D Program of China (No. 2024YFA1013700), NSFC 12271176, 12671110, 12571205, Guangdong Natural Science Foundation 2024A1515010946 and Guangdong Major Project of Basic Research 2026B0303000003.
ZH was supported by NSFC 12501115, Beijing Natural Science Foundation 1254048 and the Science Foundation of China University of Petroleum, Beijing (2462023SZBH013).

\bibliographystyle{plain}
	\bibliography{ref.bib}

\end{document}